\documentclass[a4paper]{amsart}
\pdfoutput=1
\usepackage{etex}
\usepackage[utf8]{inputenc}
\usepackage[T1]{fontenc}
\usepackage{lmodern}
\usepackage{amssymb}
\usepackage{enumitem}
\setlist[enumerate,1]{label=\textup{(\arabic*)}}

\usepackage{mathtools,booktabs}

\usepackage{tikz-cd}

\usepackage[pdftitle={C*-algebras for partial product systems over N},
  pdfauthor={Ralf Meyer, Devarshi Mukherjee},
  pdfsubject={Mathematics}
]{hyperref}
\usepackage[lite]{amsrefs}

\renewcommand*{\PrintDOI}[1]{\href{http://dx.doi.org/\detokenize{#1}}{doi: \detokenize{#1}}}

\BibSpec{book}{%
  +{}  {\PrintPrimary}                {transition}
  +{,} { \textit}                     {title}
  +{.} { }                            {part}
  +{:} { \textit}                     {subtitle}
  +{,} { \PrintEdition}               {edition}
  +{}  { \PrintEditorsB}              {editor}
  +{,} { \PrintTranslatorsC}          {translator}
  +{,} { \PrintContributions}         {contribution}
  +{,} { }                            {series}
  +{,} { \voltext}                    {volume}
  +{,} { }                            {publisher}
  +{,} { }                            {organization}
  +{,} { }                            {address}
  +{,} { \PrintDateB}                 {date}
  +{,} { }                            {status}
  +{}  { \parenthesize}               {language}
  +{}  { \PrintTranslation}           {translation}
  +{;} { \PrintReprint}               {reprint}
  +{.} { }                            {note}
  +{.} {}                             {transition}
  +{} { \PrintDOI}                   {doi}
  +{} { available at \url}            {eprint}
  +{}  {\SentenceSpace \PrintReviews} {review}
}

\usepackage{microtype}

\numberwithin{equation}{section}
\theoremstyle{plain}
\newtheorem{theorem}[equation]{Theorem}
\newtheorem{lemma}[equation]{Lemma}
\newtheorem{claim}[equation]{Claim}
\newtheorem{proposition}[equation]{Proposition}

\theoremstyle{definition}
\newtheorem{definition}[equation]{Definition}
\theoremstyle{remark}
\newtheorem{remark}[equation]{Remark}
\newtheorem{example}[equation]{Example}

\newcommand*{\Hilm}[1][E]{\mathcal #1}
\newcommand{\idealin}{\mathrel{\triangleleft}} 

\newcommand*{\C}{\mathbb C}
\newcommand*{\Z}{\mathbb Z}
\newcommand*{\N}{\mathbb N}

\newcommand*{\T}{\mathbb T}
\newcommand*{\Bound}{\mathbb B}
\newcommand*{\Comp}{\mathbb K}
\newcommand*{\Mat}{\mathbb M}


\newcommand*{\Cont}{\mathrm C}
\newcommand*{\Contc}{\mathrm{C_c}}

\newcommand*{\id}{\mathrm{id}}

\newcommand*{\ima}{\mathrm i}
\newcommand*{\diff}{\mathrm d}
\newcommand*{\ev}{\mathrm{ev}}
\newcommand*{\Cst}{\mathrm C^*}
\newcommand*{\Star}{\texorpdfstring{$^*$\nb-}{*-}}

\newcommand*{\Toep}{\mathcal{T}} 
\newcommand*{\CP}{\mathcal{O}} 

\newcommand*{\nb}{\nobreakdash}  
\newcommand*{\alb}{\hspace{0pt}} 

\newcommand*{\blank}{\textup{\textvisiblespace}}

\mathtoolsset{mathic}
\DeclarePairedDelimiter{\norm}{\lVert}{\rVert}
\DeclarePairedDelimiter{\ket}{\lvert}{\rangle}
\DeclarePairedDelimiter{\bra}{\langle}{\rvert}
\DeclarePairedDelimiterX{\braket}[2]{\langle}{\rangle}{#1\,\delimsize\vert\,\mathopen{}#2}
\DeclarePairedDelimiterX{\BRAKET}[2]{\langle}{\rangle}{\!\delimsize\langle#1\,\delimsize\vert\,\mathopen{}#2\delimsize\rangle\!}
\DeclarePairedDelimiterX{\setgiven}[2]{\{}{\}}{#1\,{:}\,\mathopen{}#2}

\newcommand*{\congto}{\xrightarrow\sim}

\newcommand*{\defeq}{\mathrel{\vcentcolon=}}
\newcommand*{\into}{\rightarrowtail}
\newcommand*{\injto}{\hookrightarrow}
\newcommand*{\prto}{\twoheadrightarrow}

\begin{document}

\title{C*-Algebras for partial product systems over~$\N$}

\author{Ralf Meyer}
\email{rmeyer2@uni-goettingen.de}

\author{Devarshi Mukherjee}
\email{devarshi.mukherjee@mathematik.uni-goettingen.de}

\address{Mathematisches Institut\\
  Universit\"at G\"ottingen\\
  Bunsenstra\ss{}e 3--5\\
  37073 G\"ottingen\\
  Germany}

\begin{abstract}
  We define partial product systems over~\(\N\).
  They generalise product systems over~\(\N\)
  and Fell bundles over~\(\Z\).
  We define Toeplitz \(\Cst\)\nb-algebras
  and relative Cuntz--Pimsner algebras for them and show that the
  section \(\Cst\)-algebra
  of a Fell bundle over~\(\Z\)
  is a relative Cuntz--Pimsner algebra.  We describe the
  gauge-invariant ideals in the Toeplitz \(\Cst\)\nb-algebra.
\end{abstract}

\subjclass[2010]{46L55}
\keywords{Product system; Fell bundle; \(\Cst\)\nb-correspondence;
  Toeplitz algebra; Cuntz--Pimsner algebra}

\thanks{This article is based on the Master's Thesis of the second author.}

\maketitle

\section{Introduction}
\label{sec:intro}

The Cuntz--Pimsner algebras introduced by Pimsner
in~\cite{Pimsner:Generalizing_Cuntz-Krieger} were generalised, among
others, by Muhly and Solel~\cite{Muhly-Solel:Tensor} and by
Katsura~\cite{Katsura:Cstar_correspondences}.
Fowler~\cite{Fowler:Product_systems} generalised Pimsner's
construction to product systems.  A
self-correspondence~\(\Hilm\)
of a \(\Cst\)\nb-algebra~\(A\)
generates a product system over~\(\N\)
by taking \(\Hilm_n \defeq \Hilm^{\otimes_A n}\)
with the obvious multiplication maps
\(\mu_{n,m}\colon \Hilm_n \otimes_A \Hilm_m \to \Hilm_{n+m}\)
for \(n,m\in\N\).
Any product system over~\(\N\)
is isomorphic to one that is built from a \(\Cst\)\nb-correspondence
like this.  Another source of product systems over~\(\N\)
are Fell bundles over~\(\Z\)
(see~\cite{Exel:Partial_dynamical}).  They consist of Hilbert
bimodules~\((\Hilm_n)_{n\in\Z}\)
with involutions \(\Hilm_n \cong \Hilm_{-n}^*\),
\(x\mapsto x^*\),
and multiplication maps
\(\mu_{n,m}\colon \Hilm_n \otimes_A \Hilm_m \to \Hilm_{n+m}\),
now for all \(n,m\in\Z\).
Due to the involutions, the Hilbert bimodules~\(\Hilm_n\)
for \(n\in\N\)
with the multiplication maps~\(\mu_{n,m}\)
for \(n,m\in\N\)
suffice to recover the entire Fell bundle.  This data gives a
product system over~\(\N\)
if and only if the maps~\(\mu_{n,m}\)
are surjective and hence unitary for all \(n,m\in\N\).
Then the Fell bundle is called \emph{semi-saturated}.  In general, the
multiplication maps are only isometries of Hilbert bimodules, possibly
without adjoint.

The construction of a (relative) Cuntz--Pimsner algebra of a product
system splits into two steps.  The first builds a Fell bundle
over~\(\Z\),
the second takes the section \(\Cst\)\nb-algebra
of that Fell bundle.  This viewpoint is used
in~\cite{Meyer-Sehnem:Bicategorical_Pimsner} to interpret relative
Cuntz--Pismner algebras in bicategorical terms.  So there is a close
and important link between product systems over~\(\N\)
and Fell bundles over~\(\Z\).
This article describes a common generalisation for both, which we call
\emph{partial product systems}.

A partial product system over~\(\N\)
consists of a \(\Cst\)\nb-algebra~\(A\)
with \(A,A\)\nb-\alb{}correspondences
\(\Hilm_n\)
for all \(n\in\N\)
and isometries
\(\mu_{n,m}\colon \Hilm_n \otimes_A \Hilm_m \injto \Hilm_{n+m}\)
for all \(n,m\in\N\),
subject to several conditions.  The obvious conditions are that the
multiplication maps~\(\mu_{n,m}\)
be associative, that \(\Hilm_0=A\)
and that \(\mu_{0,n}\colon A\otimes_A \Hilm_n \injto \Hilm_n\)
and \(\mu_{n,0}\colon \Hilm_n\otimes_A A \injto \Hilm_n\)
be induced by the \(A\)\nb-bimodule structure on~\(\Hilm_n\)
for each \(n\in\N\).
Then we speak of a \emph{weak partial product system}.  Weak partial
product systems on von Neumann algebras have already been used in
the study of \(E_0\)\nb-semigroups,
where they are called ``superproduct systems'' (see
\cites{Margetts-Srinivasan:E0_II1, Bikram:CAR_flows_III}).

For a partial product system, we impose two more conditions to get a
well-behaved theory.  Their role is similar to the compact alignment
condition for product systems over quasi-lattice orders.  The
correspondences~\(\Hilm_n\)
in a weak partial product systems over~\(\N\)
are much more independent than in an ordinary product system,
and so the freeness of the monoid~\(\N\)
no longer helps.  This makes compact alignment and Nica covariance
relevant already over~\(\N\).
Our Toeplitz algebra is, in fact, an analogue of the Nica--Toeplitz
algebra.

Our first goal is to define the Toeplitz \(\Cst\)\nb-algebra
of a partial product system.  We define partial product systems so
that, on the one hand, this \(\Cst\)\nb-algebra
has a universal property for suitable representations of the partial
product system and, on the other hand, is generated concretely by an
analogue of the Fock representation.  The definition of a
representation has some obvious data and conditions and a non-obvious
condition needed to make the Toeplitz \(\Cst\)\nb-algebra
well-behaved.  We first discuss our definition of a representation.
Then we discuss the Fock representation.  Only then can we formulate
the remaining two conditions on partial product systems.  They say
simply that the Fock representation exists and is a representation.

\begin{definition}
  \label{def:weak_representation}
  Let~\(B\)
  be a \(\Cst\)\nb-algebra.
  A \emph{weak representation} of a weak partial product system
  \((A,\Hilm_n,\mu_{n,m})_{n,m\in\N}\)
  in~\(B\)
  consists of linear maps \(\omega_n\colon \Hilm_n \to B\)
  for all \(n\in\N\), such that
  \begin{enumerate}
  \item \label{def:weak_representation_1}%
    \(\omega_n(x)\cdot \omega_m(y) = \omega_{n+m}(\mu_{n,m}(x\otimes
    y))\) for all \(n,m\in\N\), \(x\in\Hilm_n\), \(y\in\Hilm_m\);
  \item \label{def:weak_representation_2}%
    \(\omega_n(x)^*\omega_n(y) = \omega_0(\braket{x}{y})\)
    for all \(n\in\N\), \(x,y\in\Hilm_n\);
  \end{enumerate}
  A weak representation is a \emph{representation} if, in addition,
  \begin{enumerate}[resume]
  \item \label{def:weak_representation_4}%
    \(\omega_n(\Hilm_n)^* \cdot \omega_m(\Hilm_m) \subseteq
    \omega_{m-n}(\Hilm_{m-n})\cdot B\)
    for all \(n,m\in\N\) with \(m>n>0\);
  \item \label{def:weak_representation_5}%
    \(\omega_n(\Hilm_n)^* \cdot \omega_m(\Hilm_m) \subseteq
    \omega_{n-m}(\Hilm_{n-m})^*\cdot B\)
    for all \(n,m\in\N\) with \(0<m<n\).
  \end{enumerate}
  By convention, \(X\cdot Y\)
  for two subspaces in a \(\Cst\)\nb-algebra~\(B\)
  always denotes the closed linear span of the products \(x\cdot y\)
  for \(x\in X\), \(y\in Y\).
\end{definition}

Conditions \ref{def:weak_representation_1}
and~\ref{def:weak_representation_2} for \(n=m=0\)
hold if and only if~\(\omega_0\) is a
\Star{}\alb{}homomorphism.  If \(n\in\N\),
then \ref{def:weak_representation_1} for \((n,0)\)
and \((0,n)\)
and \ref{def:weak_representation_2} for~\(n\)
say that~\(\omega_n\)
is a (Toeplitz) representation of the
\(\Cst\)\nb-correspondence~\(\Hilm_n\).

The definition of a partial product system uses the \emph{Fock
  representation}.  This should be a representation in
\(\Bound(\Hilm[F])\),
where~\(\Hilm[F]\)
is the Hilbert \(A\)\nb-module
direct sum \(\bigoplus_{n\in\N} \Hilm_n\).
For \(n,m\in\N\),
\(x\in\Hilm_n\),
define
\[
S_{n,m}(x)\colon \Hilm_m \to \Hilm_{n+m},\qquad
y\mapsto \mu_{n,m}(x\otimes y).
\]
This map is linear and \(\norm{S_{n,m}(x)} \le \norm{x}\).
The first assumption for a partial product system
asks~\(S_{n,m}(x)\)
to be adjointable for all \(n,m\in\N\),
\(x\in\Hilm_n\).
Then the operators~\(S_{n,m}(x)\)
for \(m\in\N\)
combine to an operator \(S_n(x)\in\Bound(\Hilm[F])\)
and the maps~\(S_n\)
form a weak representation.  The second assumption for a partial
product system is that they even form a representation, which we call
the \emph{Fock representation}:

\begin{definition}
  \label{def:partial_product_system}
  A \emph{partial product system} is a weak partial product system for
  which the Fock representation exists and is a representation.
\end{definition}

To understand this condition better, we reformulate
\ref{def:weak_representation_4} and~\ref{def:weak_representation_5} in
Definition~\ref{def:weak_representation} in case \(S_n(x)\)
is adjointable for all \(n\in\N\),
\(x\in\Hilm_n\).
Then \ref{def:weak_representation_4}
and~\ref{def:weak_representation_5} are equivalent to the first two
cases in the following equation:
\begin{equation}
  \label{eq:representation_condition}
  \omega_n(x)^* \omega_m(y) =
  \begin{cases}
    \omega_{m-n}(S_n(x)^* y)&\text{if }m>n,\\
    \omega_{n-m}(S_m(y)^* x)^*&\text{if }n>m,\\
    \omega_0(\braket{y}{x})^*&\text{if }n=m;
  \end{cases}
\end{equation}
here \(n,m\in\N\),
\(x\in\Hilm_n\),
\(y\in\Hilm_m\).
The cases \(n<m\)
and \(n>m\)
in~\eqref{eq:representation_condition} are equivalent to each other by
taking adjoints, and the case \(n=m\)
is condition~\ref{def:weak_representation_2} in
Definition~\ref{def:weak_representation}.  So a weak partial product
system is a partial product system if and only if the operators
\(S_n(x)\)
on~\(\Hilm[F]\)
are adjointable for all \(n\in\N\), \(x\in\Hilm_n\) and satisfy
\begin{equation}
  \label{eq:covariance_for_Fock}
  S_n(x)^* S_m(y) = S_{m-n}(S_n(x)^* y)
\end{equation}
for all \(n,m\in\N\),
\(x\in\Hilm_n\),
\(y\in\Hilm_m\)
with \(m>n\).
And a weak representation of a partial product system is a
representation if and only if
\(\omega_n(x)^* \omega_m(y) = \omega_{m-n}(S_n(x)^* y)\)
for all \(n,m\in\N\),
\(x\in\Hilm_n\),
\(y\in\Hilm_m\)
with \(m>n\).
The formulation of the extra conditions in
Definition~\ref{def:weak_representation} is inspired by a similar
treatment of the Cuntz--Pimsner covariance condition of a proper
product system in~\cite{Albandik-Meyer:Colimits} and makes sense
without the adjointability of~\(S_n(x)\).
The reformulation in~\eqref{eq:representation_condition} guarantees
that there is a universal \(\Cst\)\nb-algebra
for representations of our partial product system.

We describe an example of a weak partial product system
where~\eqref{eq:covariance_for_Fock} fails.  Its definition uses
correspondences based on graphs.  A graph with vertex and edge sets
\(V\)
and~\(E\)
gives a \(\Cst\)\nb-correspondence~\(\Cst(E)\)
over~\(\Cont_0(V)\).
To define a weak partial product system using graphs, we need a common
vertex set~\(V\),
graphs \(\Gamma_n=(V,E_n,r_n,s_n)\)
for all \(n\in\N\),
and associative, injective multiplication maps
\(\mu_{n,m}\colon E_n \times_{s,r} E_m \injto E_{n,m}\),
where \(E_0=V\)
and \(r_0,s_0\)
are the identity map, and \(\mu_{0,n}\)
and~\(\mu_{n,0}\)
are the canonical maps.  We study when~\(\mu_{n,m}\)
induces an isometry
\(\Cst(E_n)\otimes_{\Cont_0(V)} \Cst(E_m) \injto \Cst(E_{n+m})\)
and when these isometries form a partial product system.  These
conditions are rather restrictive.  As it turns out, the category with
object set~\(V\)
and arrow set \(\bigsqcup_{n\in\N} E_n\)
must be the path category of an ordinary graph.  The only variation is
that the grading is not the standard one, that is, elements of~\(E_n\)
need not be paths of length~\(n\).

The following theorem generalises an important feature of Pimsner's
Toeplitz algebras.  Our definitions above are arranged so as to make
it true.

\begin{theorem}
  \label{the:Toeplitz_through_Fock}
  Let \(\Hilm = (A,\Hilm_n,\mu_{n,m})_{n,m\in\N}\)
  be a partial product system.  Let \((S_n)_{n\in\N}\)
  be its Fock representation.  The closed linear span of
  \(S_n(\Hilm_n)S_m(\Hilm_m)^*\)
  for \(m,n\in\N\)
  is a \(\Cst\)\nb-subalgebra
  \(\Toep\)
  of~\(\Bound(\Hilm[F])\),
  and the maps~\(S_n\)
  form a representation~\(\bar{\omega}_n\)
  of~\(\Hilm\)
  in~\(\Toep\).
  This representation in~\(\Toep\)
  is universal: for any representation
  \((\omega_n\colon \Hilm_n \to B)_{n\in\N}\)
  of~\(\Hilm\)
  there is a unique \Star{}homomorphism \(\varrho\colon \Toep \to B\)
  with \(\omega_n = \varrho\circ \bar{\omega}_n\) for all \(n\in\N\).
\end{theorem}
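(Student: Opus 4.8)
The plan is to dispose of the algebraic assertions quickly and concentrate on universality.

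For the first two claims: the generating set \(\setgiven{S_n(\Hilm_n)S_m(\Hilm_m)^*}{n,m\in\N}\) is symmetric under the involution, since \((S_n(x)S_m(y)^*)^*=S_m(y)S_n(x)^*\), so the closed linear span \(\Toep\) is \(^*\)-closed, and it remains to see that a product \(S_n(x)S_m(y)^*\cdot S_p(z)S_q(w)^*\) of two generators lies again in the generating set. Here I would rewrite the middle factor \(S_m(y)^*S_p(z)\): by~\eqref{eq:covariance_for_Fock} it equals \(S_{p-m}(S_m(y)^*z)\) if \(p>m\), its adjoint \(S_{m-p}(S_p(z)^*y)^*\) if \(p<m\), and \(S_0(\braket{y}{z})\) if \(p=m\) by~\ref{def:weak_representation_2}; in every case condition~\ref{def:weak_representation_1} then fuses the result into a single product \(S_a(\cdot)S_b(\cdot)^*\). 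Hence \(\Toep\) is a \(\Cst\)-subalgebra of \(\Bound(\Hilm[F])\). Taking \(m=0\) above and using that \(\Hilm_n\cdot A\) is dense in \(\Hilm_n\) shows \(S_n(\Hilm_n)\subseteq\Toep\), so \(\bar\omega_n\defeq S_n\) really maps into \(\Toep\); and since conditions~\ref{def:weak_representation_1}--\ref{def:weak_representation_5} for the Fock representation (Definition~\ref{def:partial_product_system}) only involve products, adjoints and closed linear spans of the \(S_n(\Hilm_n)\), all of which coincide whether computed in \(\Toep\) or in \(\Bound(\Hilm[F])\), the \(\bar\omega_n\) form a representation of \(\Hilm\) in \(\Toep\).

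For universality, uniqueness of \(\varrho\) is immediate: a \(^*\)-homomorphism with \(\varrho\circ\bar\omega_n=\omega_n\) must send \(S_n(x)S_m(y)^*\mapsto\omega_n(x)\omega_m(y)^*\), and these elements span a dense subspace of \(\Toep\). For existence I would bring in the universal \(\Cst\)-algebra \(\Cst(\Hilm)\) for representations of \(\Hilm\), with universal representation \((j_n\colon\Hilm_n\to\Cst(\Hilm))\): it exists because~\ref{def:weak_representation_2} forces every representation to be contractive on each \(\Hilm_n\), and because a representation exists at all, namely the Fock one. Its universal property, applied to \((\bar\omega_n)\), produces a surjective \(^*\)-homomorphism \(\pi\colon\Cst(\Hilm)\to\Toep\) with \(\pi\circ j_n=\bar\omega_n\). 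Once \(\pi\) is known to be injective, it identifies \(\Toep\) with \(\Cst(\Hilm)\) compatibly with the representations, and the universal property transfers verbatim to \((\bar\omega_n)\) in \(\Toep\) (given a representation \((\omega_n)\) in \(B\), pull it back to \(\varrho'\colon\Cst(\Hilm)\to B\) along the universal property and set \(\varrho\defeq\varrho'\circ\pi^{-1}\)). So the theorem comes down to faithfulness of the Fock representation.

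That faithfulness is the step I expect to be the main obstacle, and I would prove it by a gauge-invariant uniqueness argument. Both \(\Cst(\Hilm)\) and \(\Toep\) carry circle actions scaling \(j_n(x)j_m(y)^*\), resp.\ \(S_n(x)S_m(y)^*\), by \(z^{n-m}\) — on \(\Toep\) by conjugation with the diagonal unitaries \(U_z|_{\Hilm_n}=z^n\), on \(\Cst(\Hilm)\) via the universal property applied to \((z^nj_n)_n\) — and \(\pi\) is equivariant, so averaging over \(\T\) yields faithful conditional expectations onto the fixed-point subalgebras and reduces injectivity of \(\pi\) to injectivity on \(\Cst(\Hilm)^\T=\overline{\operatorname{span}}\,\setgiven{j_n(x)j_n(y)^*}{n\in\N,\ x,y\in\Hilm_n}\). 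Using the clean identity \(j_n(y)^*j_m(x)=j_{m-n}(S_n(y)^*x)\) for \(m\ge n\), valid for every representation (the reformulation after~\eqref{eq:representation_condition}), the subspaces \(\overline{\operatorname{span}}\,\setgiven{j_n(x)j_n(y)^*}{n\le N}\) form an increasing chain of \(\Cst\)-subalgebras with dense union, inside each of which the top piece \(\overline{\operatorname{span}}\,\setgiven{j_N(x)j_N(y)^*}{x,y\in\Hilm_N}\cong\Comp(\Hilm_N)\) is an ideal; the isomorphism here holds because \(j_N\) is isometric, which reduces to injectivity of \(j_0\), which in turn follows from injectivity of \(S_0\colon A\to\Bound(\Hilm[F])\) (left multiplication of \(A\) on the \(\Hilm_0\)-summand is faithful). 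One then inducts on \(N\): \(\pi\) is visibly isometric on the ideal \(\Comp(\Hilm_N)\), and on the quotient — itself a quotient of the previous subalgebra — it remains faithful by the inductive hypothesis together with a compatibility check for the two quotient extensions. Carrying out this last piece of bookkeeping — identifying the quotients \(\overline{\operatorname{span}}\,\setgiven{j_n(x)j_n(y)^*}{n\le N}\big/\Comp(\Hilm_N)\) and checking that \(\pi\) matches the resulting extensions — is the real content of the argument.
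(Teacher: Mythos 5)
Your overall architecture coincides with the paper's: construct the universal \(\Cst\)\nb-algebra from generators and relations (Proposition~\ref{pro:Toeplitz_relations}), map it onto the concrete Fock algebra, use the gauge action and the faithful conditional expectation to reduce injectivity to the gauge-fixed subalgebra (Theorem~\ref{inje}), and analyse that subalgebra as an increasing union of the pieces spanned by \(j_n(x)j_n(y)^*\) for \(n\le N\) (Theorems \ref{the:Toeplitz_gauge_fixed} and~\ref{the:Toeplitz_Fock}). Everything up to and including that reduction is correct. The gap is in the final induction. Writing \(B_N\) for the closed linear span of the \(j_n(x)j_n(y)^*\) with \(n\le N\) and \(I_N\defeq\Theta_N(\Comp(\Hilm_N))\idealin B_N\), the assertion that \(\pi\) ``remains faithful on the quotient by the inductive hypothesis'' does not follow: injectivity of \(\pi\) on \(B_{N-1}\) gives no control over the induced map on \(B_N/I_N\), because a quotient of an injective \Star{}homomorphism need not be injective. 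Unwinding the extension argument, the kernel of \(\pi|_{B_N}\) meets \(I_N\) trivially and hence injects into \(B_N/I_N\cong B_{N-1}/(B_{N-1}\cap I_N)\); to kill it there one needs the concrete statement
\[
\pi(B_{N-1}) \cap \pi\bigl(\Theta_N(\Comp(\Hilm_N))\bigr) = 0
\qquad\text{inside } \Bound(\Hilm[F]),
\]
and nothing in your proposal establishes this. It is exactly the step you have labelled ``bookkeeping'', and it is where the actual work lies, so as written the proof is incomplete at its crux.

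The paper avoids the extension gymnastics altogether: in Theorem~\ref{the:Toeplitz_gauge_fixed} it shows directly that \(\bigoplus_{j=0}^N \Comp(\Hilm_j) \to \Bound(\Hilm[F])\), \((x_j)\mapsto\sum_j\Theta_j(x_j)\), is injective by reading off the diagonal blocks of the image on \(\Hilm[F]=\bigoplus_\ell\Hilm_\ell\) one at a time (the \(0\)th block is \(x_0\), hence \(x_0=0\); then the \(1\)st block forces \(x_1=0\); and so on), and notes that this recursion produces a bounded inverse, so the image \(\Toep_{0,N}\) is closed and isomorphic to \(\bigoplus_{j\le N}\Comp(\Hilm_j)\). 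That single computation delivers simultaneously the intersection statement above, the closedness of the algebraic sum, and the injectivity of \(\pi\) on each \(\Toep_{0,N}\), after which the inductive-limit and gauge-uniqueness steps finish the proof exactly as you outline. I would replace your deferred quotient-matching step by this block-diagonal recursion; with that inserted, the rest of your argument (the \(\Cst\)\nb-algebra structure of \(\Toep\) via~\eqref{eq:covariance_for_Fock}, the uniqueness of \(\varrho\) by density, and the transfer of the universal property along \(\pi^{-1}\)) is sound.
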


We describe a gauge action of the circle group~\(\T\)
on~\(\Toep\)
and prove a gauge-equivariant uniqueness theorem for~\(\Toep\).
This allows us to prove Theorem~\ref{the:Toeplitz_through_Fock}.  We
also describe the fixed-point subalgebra of the gauge action
explicitly as an inductive limit \(\Cst\)\nb-algebra.
For \(m,n\in\N\),
there is a unique linear map
\[
\Theta_{m,n}\colon \Comp(\Hilm_m,\Hilm_n) \to \Toep
\]
with
\[
  \Theta_{m,n}(\ket{x}\bra{y}) =
  \bar{\omega}_n(x) \bar{\omega}_m(y)^*
\]
for all \(x\in\Hilm_n\),
\(y\in\Hilm_m\).
These maps are injective and their images are linearly independent
subspaces of~\(\Toep\).
Their direct sum over all \(m,n\in\N\)
is a dense \Star{}subalgebra in~\(\Toep\)
because of~\eqref{eq:representation_condition}.

We classify gauge-invariant ideals \(H\idealin\Toep\)
in the Toeplitz \(\Cst\)\nb-algebra
by two ideals in~\(A\),
namely, the kernel \(A\cap H\)
and the covariance ideal, consisting of all \(a\in A\)
that are equal modulo~\(H\)
to an element in the closed linear span of \(\Comp(\Hilm_i)\)
for \(i\ge1\).
Theorem~\ref{the:ideal_Toeplitz} describes~\(H\)
through its kernel and covariance ideal.  The kernel~\(I\)
is always an invariant ideal, and any invariant ideal may occur.  We
do not know, in general, which covariance ideals are possible.  We
also define what it means for a representation to be covariant on an
ideal.  If an ideal~\(J\)
is the covariance ideal of some representation, then there is a
universal \(\Cst\)\nb-algebra
\(\CP(\Hilm,J)\)
for representations of~\(\Hilm\)
that are covariant on~\(J\).
We characterise when the canonical representation of~\(\Hilm\)
in~\(\CP(\Hilm,J)\)
is faithful: this happens if and only if \(J\subseteq K^\bot\)
for \(K \defeq \bigcap_{n=1}^\infty \ker(\vartheta_0^n)\),
where \(\vartheta_0^n\colon A\to \Bound(\Hilm_n)\)
is the left action in the correspondence~\(\Hilm_n\).
This allows us to define an analogue of Katsura's \(\Cst\)\nb-algebra
for partial product systems.  We also define an analogue of Pimsner's
\(\Cst\)\nb-algebra
as the quotient of~\(\Toep\)
by the closure of the finite block matrices in the Fock
representation.  For global product systems, we show that the Katsura
and Pimsner \(\Cst\)\nb-algebras
defined here agree with those previously constructed.  We show that
the Katsura \(\Cst\)\nb-algebra
of a Fell bundle over~\(\Z\),
restricted to a partial product system over~\(\N\),
is the section \(\Cst\)\nb-algebra
of the Fell bundle.  Using this, we characterise when a partial
product system over~\(\N\)
is the restriction of a Fell bundle over~\(\Z\).
For the partial product systems built from graphs, we show that the
Katsura algebra is the graph \(\Cst\)\nb-algebra.

\section{Weak partial product systems and representations}
\label{sec:weak}

In this section, we recall some basic notions and study
general properties of weak partial product systems and their weak
representations.  We examine when a weak Fock representation
exists, that is, when the operators~\(S_n(x)\)
on the Fock module mentioned in the introduction are adjointable.  We
illustrate our theory with \(\Cst\)\nb-correspondences
built from graphs.

Let \(A\),
\(B\),
\(C\)
be \(\Cst\)\nb-algebras.
An \emph{\(A,B\)\nb-correspondence}
is a right Hilbert module~\(\Hilm\)
over~\(B\)
with a \Star{}homomorphism \(\vartheta\colon A \to \Bound(\Hilm)\)
satisfying
\(\braket{\vartheta(a)x}{y}_B = \braket{x}{\vartheta(a)^* y}_B\)
for all \(a\in A\),
\(x,y \in \Hilm\).
We often write~\(a x\)
instead of \(\vartheta(a)(x)\).
\emph{We do not require the left action of~\(A\)
  on~\(\Hilm\)
  to be nondegenerate, and we allow representations of
  \(\Cst\)\nb-algebras to be degenerate throughout this article.}

Let \(\Hilm\)
and~\(\Hilm[F]\)
be an \(A,B\)-
and a \(B,C\)-correspondence.
We equip the algebraic tensor product \(\Hilm\odot \Hilm[F]\)
with the obvious \(A,C\)-bimodule
structure \(a(x\otimes y)c=ax\otimes yc\)
for \(a\in A\),
\(c\in C\),
\(x \in \Hilm\),
\(y\in\Hilm[F]\), and with the \(C\)\nb-valued inner product
\begin{equation}
  \label{IP}
  \braket{x_1 \otimes y_1}{x_2 \otimes y_2}_C
  \defeq \braket{y_1}{\braket{x_1}{x_2}_A \cdot y_2}_B
\end{equation}
for \(x_1, x_2 \in \Hilm\)
and \(y_1, y_2 \in \Hilm[F]\).
The Hausdorff completion of \(\Hilm\odot \Hilm[F]\)
for this inner product is an \(A,C\)-correspondence
denoted by \(\Hilm\otimes_B \Hilm[F]\)
(see~\cite{Lance:Hilbert_modules}).  The \emph{identity
  correspondence}~\(\Hilm[A]\)
on~\(A\)
for a \(\Cst\)\nb-algebra~\(A\)
is~\(A\)
viewed as an \(A,A\)\nb-correspondence
using the obvious bimodule structure and the inner product
\(\braket{a}{b}_A \defeq a^* b\).

\begin{lemma}
  \label{ass}
  Let \({}_A\Hilm_B\),
  \({}_B\Hilm[F]_C\)
  and \({}_C\mathcal{G}_D\)
  be \(\Cst\)\nb-correspondences
  between the indicated \(\Cst\)\nb-algebras
  \(A,B,C,D\).  There are canonical isomorphisms of correspondences
  \begin{alignat*}{2}
    (\Hilm\otimes_B \Hilm[F]) \otimes_C \mathcal{G}
    &\cong \Hilm\otimes_B (\Hilm[F] \otimes_C \mathcal{G}),&\qquad
    (x\otimes y) \otimes z &\mapsto x \otimes (y \otimes z),\\
    \Hilm[A]\otimes_A \Hilm &\cong A\cdot \Hilm \subseteq \Hilm,&\qquad
    a \otimes x &\mapsto a x,\\
    \Hilm\otimes_B \Hilm[B] &\cong \Hilm,&\qquad
    x \otimes b &\mapsto x b.
  \end{alignat*}
\end{lemma}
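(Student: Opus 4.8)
The plan is, in each of the three cases, to write down the obvious linear map on algebraic tensor products, to check that it preserves the inner products defined by~\eqref{IP}, and then to invoke the standard fact that an inner\nb-product\nb-preserving linear map with dense range between two (pre\nobreakdash)Hilbert modules extends uniquely to a unitary. Once the map is unitary, compatibility with the bimodule structures is read off directly from the formulas: in every case the left action of the first \(\Cst\)\nb-algebra and the right action of the last are performed on the tensor legs that the map leaves untouched, so it is automatically a morphism of correspondences. A minor preliminary point is that the algebraic maps descend to the Hausdorff completions at all, but this is automatic once the inner\nb-product identities are in hand, since an inner\nb-product\nb-preserving map kills exactly the length\nobreakdash-zero vectors that are quotiented out in forming \(\otimes\).

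For the associativity isomorphism, set \(\Phi((x\otimes y)\otimes z)\defeq x\otimes(y\otimes z)\) on \((\Hilm\odot\Hilm[F])\odot\mathcal G\) and extend linearly. Applying~\eqref{IP} twice to each side shows that both \(\braket{(x_1\otimes y_1)\otimes z_1}{(x_2\otimes y_2)\otimes z_2}_D\) and \(\braket{x_1\otimes(y_1\otimes z_1)}{x_2\otimes(y_2\otimes z_2)}_D\) equal \(\braket{z_1}{\braket{y_1}{\braket{x_1}{x_2}_A y_2}_B z_2}_D\); the one thing to watch is that \(\braket{x_1}{x_2}_A\) acts on \(\Hilm[F]\otimes_C\mathcal G\) through its first leg. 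Hence \(\Phi\) is isometric for the pre\nb-inner products and descends to an isometry \((\Hilm\otimes_B\Hilm[F])\otimes_C\mathcal G\to\Hilm\otimes_B(\Hilm[F]\otimes_C\mathcal G)\). Its image contains all elementary tensors \(x\otimes(y\otimes z)\) and is therefore dense, so \(\Phi\) is the asserted unitary, and it clearly intertwines the \(A\)\nb- and \(D\)\nb-actions.

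For \(\Hilm[A]\otimes_A\Hilm\), send \(a\otimes x\mapsto ax\). Here \(\braket{a_1}{a_2}_{\Hilm[A]}=a_1^*a_2\), so~\eqref{IP} together with the identity \(\braket{\vartheta(a)x}{y}=\braket{x}{\vartheta(a^*)y}\) from the definition of a correspondence gives \(\braket{a_1\otimes x_1}{a_2\otimes x_2}_B=\braket{x_1}{a_1^*a_2x_2}_B=\braket{a_1x_1}{a_2x_2}_B\); the map is thus isometric with image spanning \(A\cdot\Hilm\), hence a unitary onto \(A\cdot\Hilm\). Likewise \(x\otimes b\mapsto xb\) defines a map \(\Hilm\otimes_B\Hilm[B]\to\Hilm\) with \(\braket{x_1\otimes b_1}{x_2\otimes b_2}_B=b_1^*\braket{x_1}{x_2}_Bb_2=\braket{x_1b_1}{x_2b_2}_B\); surjectivity now uses that \(\Hilm\cdot B\) is dense in~\(\Hilm\), since every right Hilbert \(B\)\nb-module is nondegenerate (Cohen factorisation). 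No step presents a genuine obstacle; the associativity inner\nb-product identity is the one worth writing out carefully, as it is where~\eqref{IP} is applied twice and the action of \(\braket{x_1}{x_2}_A\) on the first leg of \(\Hilm[F]\otimes_C\mathcal G\) must be used.
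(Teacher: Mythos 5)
Your proposal is correct; the paper states Lemma~\ref{ass} without proof, treating these isomorphisms as standard facts about interior tensor products of Hilbert modules (in the spirit of~\cite{Lance:Hilbert_modules}), and your argument — checking the inner-product identities from~\eqref{IP} on elementary tensors, passing to the Hausdorff completions, and noting density of the range (with the image correctly identified as \(A\cdot\Hilm\) in the second case, since the left action may be degenerate) — is exactly the standard verification one would write out.
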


We usually omit parentheses in tensor products and the associator
isomorphism in Lemma~\ref{ass} to reduce the size of our diagrams.
They are canonical enough that this cannot cause confusion.

\begin{definition}
  \label{def:weak_pps}
  A \emph{weak partial product system} over~\(\N\)
  consists of
  \begin{itemize}
  \item a \(\Cst\)\nb-algebra~\(A\),
  \item \(A,A\)-correspondences \(\Hilm_n\) for \(n\in\N_{\ge1}\),
  \item isometric bimodule maps
    \(\mu_{n,m}\colon \Hilm_n \otimes_A \Hilm_m \injto \Hilm_{n+m}\)
    for \(n,m\in\N_{\ge1}\);
  \end{itemize}
  such that the following diagrams commute for all
  \(n,m,l\in \N_{\ge1}\) (``associativity''):
  \begin{equation}
    \label{assoeqn}
    \begin{tikzcd}[column sep=huge]
      \Hilm_n \otimes_A \Hilm_m \otimes_A \Hilm_l
      \arrow[r, hookrightarrow, "1_{\Hilm_n}\otimes_A \mu_{m,l}"]
      \arrow[d, hookrightarrow, "\mu_{n,m}\otimes_A 1_{\Hilm_l}"'] &
      \Hilm_n \otimes_A \Hilm_{m+l}
      \arrow[d, hookrightarrow, "\mu_{n,m+l}"] \\
      \Hilm_{n+m} \otimes_A \Hilm_l
      \arrow[r, hookrightarrow, "\mu_{n+m,l}"] &
      \Hilm_{n+m+l}
    \end{tikzcd}
  \end{equation}
  (Being isometric means that
  \(\braket{\iota(x)}{\iota(y)} = \braket{x}{y}\)
  for all \(x,y\in\Hilm_n \otimes_A \Hilm_m\).)
  Let \(\Hilm_0\defeq A\)
  and let
  \(\mu_{0,m}\colon \Hilm[A] \otimes_A \Hilm_m \injto \Hilm_m\)
  and \(\mu_{m,0}\colon \Hilm_m\otimes_A \Hilm[A] \injto \Hilm_m\)
  be the canonical isometries from Lemma~\ref{ass}.  Then the
  diagram~\eqref{assoeqn} commute for all \(n,m,l\in \N\).
\end{definition}

Let \((A,\Hilm_n,\mu_{n,m})_{n,m\in\N}\)
be a weak partial product system.  Let~\((\omega_n)_{n\in\N}\)
be a weak representation of it in a \(\Cst\)\nb-algebra~\(B\)
as in Definition~\ref{def:weak_representation}.  That is,
\(\omega_n \colon \Hilm_n \to B\)
for \(n\in\N\)
are linear maps satisfying the conditions
\ref{def:weak_representation_1} and~\ref{def:weak_representation_2} in
Definition~\ref{def:weak_representation}.  Let \(m,n\in\N\).
By definition, \(\Comp(\Hilm_m, \Hilm_n)\)
is the closed linear span in \(\Bound(\Hilm_m,\Hilm_n)\)
of \(\ket{x} \bra{y}\)
for \(x\in \Hilm_n\),
\(y \in \Hilm_m\),
where \((\ket{x} \bra{y})(z)\defeq x\braket{y}{z}_A\)
for all \(z\in\Hilm_m\).
There is a unique map
\(\Theta_{m,n}\colon \Comp(\Hilm_m,\Hilm_n) \to B\) with
\begin{equation}
  \label{eq:def_Theta}
  \Theta_{m,n}(\ket{x} \bra{y}) = \omega_n(x) \omega_m(y)^*
\end{equation}
for all \(x\in\Hilm_n\),
\(y \in \Hilm_m\);
this follows from \cite{Pimsner:Generalizing_Cuntz-Krieger}*{Lemma
  3.2} applied to the representation
\(\omega_m\oplus\omega_n\colon \Hilm_m\oplus\Hilm_n \to \Mat_2(B)\)
of \(\Hilm_m\oplus\Hilm_n\),
by viewing \(\Comp(\Hilm_m,\Hilm_n)\)
as an off-diagonal corner in \(\Comp(\Hilm_m\oplus \Hilm_n)\).
These maps are compatible with the multiplication maps and adjoints,
that is,
\begin{equation}
  \label{eq:Theta_multiplicative}
  \Theta_{m,n}(S)\cdot \Theta_{l,m}(T) = \Theta_{l,n}(S\cdot T),\qquad
  \Theta_{n,m}(S^*) = \Theta_{m,n}(S)^*
\end{equation}
for all \(S\in\Comp(\Hilm_m,\Hilm_n)\),
\(T\in\Comp(\Hilm_l,\Hilm_m)\);
this follows from the case of rank-one operators, which in turn
follows easily from the conditions in
Definition~\ref{def:weak_representation}:
\[
\omega_n(x) \omega_m(y)^* \omega_m(z) \omega_l(w)^*
= \omega_n(x) \omega_0(\braket{y}{z}) \omega_l(w)^*
= \omega_n(x\cdot\braket{y}{z}) \omega_l(w)^*.
\]
In particular,
\[
\Theta_{n,0}(x) = \omega_n(x),\qquad
\Theta_{0,n}(x^*) = \omega_n(x)^*
\]
for all \(x\in \Hilm_n \cong \Comp(A,\Hilm_n)\).
So \eqref{eq:Theta_multiplicative} implies
\begin{equation}
  \label{eq:Theta_multiplicative_0}
  \Theta_{m,n}(T)\omega_m(x) = \omega_n(T(x))
\end{equation}
for all \(m,n\in\N\), \(T\in\Comp(\Hilm_m,\Hilm_n)\),
\(x\in\Hilm_m\).

The maps~\(\Theta_{m,n}\)
map \(\Comp(\Hilm_m,\Hilm_n)\)
into the \(\Cst\)\nb-subalgebra
of~\(B\)
that is generated by~\(\omega_n(\Hilm_n)\).
The closed linear span of \(\Theta_{m,n}(\Comp(\Hilm_m,\Hilm_n))\)
for \(m,n\in\N\)
need not be an algebra.  We will impose more relations to arrange for
this later.

\begin{definition}
  \label{def:invariant_ideal}
  An ideal \(J\idealin A\)
  is \emph{invariant} with respect to a weak partial product system
  \(\Hilm = (A,\Hilm_n,\mu_{n,m})_{n,m\in\N}\)
  if \(J\cdot \Hilm_n \subseteq \Hilm_n\cdot J\) for all \(n\in\N\).
\end{definition}

\begin{lemma}
  \label{lem:faithful_A_Hilm_Comp}
  Let~\((\omega_n)_{n\in\N}\)
  be a weak representation of a weak partial product system~\(\Hilm\)
  in a \(\Cst\)\nb-algebra~\(B\).
  The ideal \(I\defeq \ker\omega_0\) is invariant and
  \[
  \ker \omega_n = \Hilm_n\cdot I\subseteq \Hilm_n,\qquad
  \ker \Theta_{m,n} = \Comp(\Hilm_m,\Hilm_n \cdot I)
  \subseteq \Comp(\Hilm_m,\Hilm_n).
  \]
\end{lemma}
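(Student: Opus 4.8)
The plan is to reduce everything to conditions \ref{def:weak_representation_1} and~\ref{def:weak_representation_2} together with one elementary fact about Hilbert modules, which I would isolate first as a sublemma: for a Hilbert \(A\)\nb-module~\(\Hilm\) and a closed ideal \(I\idealin A\), one has \(\setgiven{x\in\Hilm}{\braket{x}{x}\in I}=\Hilm\cdot I\). The inclusion ``\(\supseteq\)'' follows from \(\braket{xa}{xa}=a^*\braket{x}{x}a\) and closedness of~\(I\); for ``\(\subseteq\)'', put \(b\defeq\braket{x}{x}\in I_{+}\) and use the standard estimate \(\norm{x-xb^{1/k}}^2=\norm{b(1-b^{1/k})^2}\to0\) as \(k\to\infty\), noting that \(b^{1/k}\in\Cst(b)\subseteq I\); hence \(x=\lim_k xb^{1/k}\in\Hilm\cdot I\). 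Since conditions \ref{def:weak_representation_1} and~\ref{def:weak_representation_2} for \(n=m=0\) make \(\omega_0\) a \Star{}homomorphism, \(I=\ker\omega_0\) is a closed ideal of~\(A\) to begin with.

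Now apply \ref{def:weak_representation_2} with \(x=y\): an element \(x\in\Hilm_n\) lies in \(\ker\omega_n\) if and only if \(\omega_0(\braket{x}{x})=\omega_n(x)^*\omega_n(x)=0\), that is, if and only if \(\braket{x}{x}\in I\); by the sublemma this set is exactly \(\Hilm_n\cdot I\), which gives \(\ker\omega_n=\Hilm_n\cdot I\). Invariance of~\(I\) then drops out of \ref{def:weak_representation_1} for the pair \((0,n)\): for \(a\in I\) and \(x\in\Hilm_n\) we get \(\omega_n(ax)=\omega_0(a)\omega_n(x)=0\), so \(I\cdot\Hilm_n\subseteq\ker\omega_n=\Hilm_n\cdot I\).

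For \(\ker\Theta_{m,n}\), the inclusion \(\Comp(\Hilm_m,\Hilm_n\cdot I)\subseteq\ker\Theta_{m,n}\) is immediate on generators, since \(\Theta_{m,n}(\ket{x}\bra{y})=\omega_n(x)\omega_m(y)^*=0\) when \(x\in\Hilm_n\cdot I=\ker\omega_n\). For the reverse inclusion, take \(T\in\Comp(\Hilm_m,\Hilm_n)\) with \(\Theta_{m,n}(T)=0\). By~\eqref{eq:Theta_multiplicative} we have \(\Theta_{m,m}(T^*T)=\Theta_{m,n}(T)^*\Theta_{m,n}(T)=0\), and then~\eqref{eq:Theta_multiplicative_0} gives \(\omega_m(T^*Tx)=\Theta_{m,m}(T^*T)\omega_m(x)=0\) for all \(x\in\Hilm_m\); hence \(T^*Tx\in\ker\omega_m=\Hilm_m\cdot I\). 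From \(\braket{Tx}{Tx}=\braket{x}{T^*Tx}\in I\) and the sublemma, \(Tx\in\Hilm_n\cdot I\) for all~\(x\); applying the same computation to \(\abs{T}\defeq(T^*T)^{1/2}\) (and then to \(\abs{T}^{1/2}\)) gives \(\abs{T}(\Hilm_m)\subseteq\Hilm_m\cdot I\) and \(\abs{T}^{1/2}(\Hilm_m)\subseteq\Hilm_m\cdot I\). Approximating \(\abs{T}^{1/2}\) by finite-rank operators in \(\Comp(\Hilm_m)\) and using \(\abs{T}=\abs{T}^{1/2}\cdot\abs{T}^{1/2}\) shows \(\abs{T}\in\Comp(\Hilm_m,\Hilm_m\cdot I)\); since the latter is a closed two-sided ideal of \(\Comp(\Hilm_m)\), hence stable under continuous functional calculus, also \(\abs{T}^{1/k}\in\Comp(\Hilm_m,\Hilm_m\cdot I)\). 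Finally \(T=\lim_k T\abs{T}^{1/k}\), and because elements of \(\Comp(\Hilm_m)\) are \(A\)\nb-linear, \(T\abs{T}^{1/k}\) lies in the closed span of operators \(\ket{Tp}\bra{q}\) with \(p,q\in\Hilm_m\cdot I\) (Cohen--Hewitt factorisation) and \(Tp\in\Hilm_n\cdot I\); so \(T\in\Comp(\Hilm_m,\Hilm_n\cdot I)\).

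I expect the last paragraph to be the main obstacle: the passage from ``\(T\) takes values in \(\Hilm_n\cdot I\)'' to ``\(T\in\Comp(\Hilm_m,\Hilm_n\cdot I)\)'' requires assembling the polar-type approximation \(T=\lim_k T\abs{T}^{1/k}\), the identification of \(\Comp(\Hilm_m,\Hilm_m\cdot I)\) as a closed (hence functional-calculus-stable) ideal of \(\Comp(\Hilm_m)\), and the Cohen--Hewitt factorisation \(\Hilm_m\cdot I=\Hilm_m\cdot I^2\) needed to exhibit elements of \(\Comp(\Hilm_m,\Hilm_m\cdot I)\) with both ``ket'' and ``bra'' vectors in \(\Hilm_m\cdot I\). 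None of this is deep, but it is where care is needed; everything else reduces mechanically to the sublemma and conditions \ref{def:weak_representation_1}--\ref{def:weak_representation_2}.
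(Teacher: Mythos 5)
Your argument is correct and follows essentially the same route as the paper: both reduce \(\ker\omega_n\) to the identification \(\setgiven{x\in\Hilm_n}{\braket{x}{x}\in I}=\Hilm_n\cdot I\), deduce invariance of~\(I\) from \(I\cdot\Hilm_n\subseteq\ker\omega_n\), and characterise \(\ker\Theta_{m,n}\) through the pointwise condition \(T(\Hilm_m)\subseteq\Hilm_n\cdot I\) using \(\omega_n(T(x))=\Theta_{m,n}(T)\omega_m(x)\). The only difference is that you spell out two Hilbert-module facts the paper treats as standard; in particular, your functional-calculus detour through \(\abs{T}^{1/k}\) can be shortened by writing \(T=\lim T u_\lambda\) for a finite-rank approximate unit \((u_\lambda)\) of \(\Comp(\Hilm_m)\), which lands in \(\Comp(\Hilm_m,\Hilm_n\cdot I)\) once \(T(\Hilm_m)\subseteq\Hilm_n\cdot I\) is known.
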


\begin{proof}
  Let \(x\in\Hilm_n\).
  We have \(\omega_n(x)=0\)
  if and only if
  \(0 = \omega_n(x)^* \omega_n(x) = \omega_0(\braket{x}{x})\),
  if and only if \(\braket{x}{x}\in I\).
  The latter is equivalent to \(x\in\Hilm_n\cdot I\).
  So \(\ker \omega_n = \Hilm_n\cdot I\).
  This implies that~\(I\)
  is invariant because \(I\cdot\Hilm_n\subseteq \ker\omega_n\).
  Let \(T\in\Comp(\Hilm_m,\Hilm_n)\).
  Then \(T\in\Comp(\Hilm_m,\Hilm_n\cdot I)\)
  if and only if \(T(x)\in \Hilm_n\cdot I\)
  for all \(x\in\Hilm_m\).
  By the first part, this is equivalent to
  \(\omega_n(T(x)) = \Theta_{m,n}(T)\omega_m(x)=0\)
  for all \(x\in\Hilm_m\).
  This follows if \(\Theta_{m,n}(T)=0\).
  Conversely, if \(\Theta_{m,n}(T)\omega_m(x)=0\)
  for all \(x\in\Hilm_m\),
  then \(\Theta_{m,n}(T)\cdot \Theta_{n,m}(S)=0\)
  for all \(S\in\Comp(\Hilm_n,\Hilm_m)\).
  Taking \(S=T^*\),
  this implies \(\Theta_{m,n}(T)=0\)
  because \(\Theta_{n,m}(T^*) = \Theta_{m,n}(T)^*\).
\end{proof}

Next we seek an analogue of the Fock representation.  We want this to
exist because it is used by
Pimsner~\cite{Pimsner:Generalizing_Cuntz-Krieger} to define the
Toeplitz \(\Cst\)\nb-algebra.
The Fock representation should be a (weak) representation on the
Hilbert \(A\)\nb-module
\(\Hilm[F] \defeq \bigoplus_{n=0}^\infty \Hilm_n\),
which we call the \emph{Fock module} of~\(\Hilm\).
Fix \(n\in\N\)
and \(x\in\Hilm_n\).
In the Fock representation, \(x\)
should act on the summand~\(\Hilm_m\) by the operator
\[
S_{n,m}(x)\colon \Hilm_m \to \Hilm_{n+m},\qquad
y\mapsto x\cdot y \defeq \mu_{n,m}(x\otimes y).
\]
More precisely, \(S_{n,m}(x)\)
is the composite of the isometry~\(\mu_{n,m}\)
with the creation operator
\(\Hilm_m \to \Hilm_n\otimes_A \Hilm_m\),
\(y\mapsto x\otimes y\),
which is adjointable.
So the operator above is a well defined bounded linear map.  It is
always adjointable for \(n=0\),
but not for \(n>0\).
Sufficient conditions for this are the following:
\begin{enumerate}
\item \label{cond:mu_nm_adjointable}%
  if the isometries~\(\mu_{n,m}\) are adjointable;
\item if the correspondence~\(\Hilm_n\)
  is proper, that is, \(A\)
  acts by compact operators on~\(\Hilm_n\):
  then the creation operator \(\Hilm_m \to \Hilm_n\otimes_A \Hilm_m\)
  is compact, and then so is~\(S_{n,m}(x)\)
  because \(\Comp(\Hilm[F]) \subseteq \Comp(\Hilm)\)
  if \(\Hilm[F]\subseteq \Hilm\)
  is a Hilbert submodule in a Hilbert module;
\item if~\((\Hilm_n)_{n\in\N}\)
  comes from a Fell bundle over~\(\Z\):
  then the left multiplication map \(\Hilm_{n+m} \to \Hilm_m\),
  \(y\mapsto x^*\cdot y\),
  for \(x^*\in\Hilm_{-n}\)
  is adjoint to~\(S_{n,m}(x)\).
\end{enumerate}
Condition~\ref{cond:mu_nm_adjointable} contains product systems in
the usual sense, where each~\(\mu_{n,m}\)
is unitary and hence adjointable.

The adjointability of the operators~\(S_{n,m}(x)\)
is one of the requirements for a partial product system.  In other
words, we require that for all \(x\in \Hilm_n\),
\(t\in \Hilm_{n+m}\),
there is \(z\in\Hilm_m\),
necessarily unique, with \(\braket{t}{x\cdot y}_A = \braket{z}{y}_A\)
for all \(y\in \Hilm_m\).
If the maps \(S_{n,m}(x)\)
are adjointable, then so is the \emph{creation operator}
\(S_n(x)\defeq \sum_{m\in\N} S_{n,m}(x)\)
on the Fock module~\(\Hilm[F]\);
we call the adjoint \(S_n(x)^*\)
an \emph{annihilation operator}.  It is easy to see that the
maps~\(S_n(x)\)
form a weak representation of our weak partial product system in
\(\Bound(\Hilm[F])\).
That is, \(S_n(x) S_m(y) = S_{n+m}(x\cdot y)\)
for \(x\in\Hilm_n\),
\(y\in\Hilm_m\),
\(n,m\in\N_{\ge1}\),
and \(S_n(x)^* S_n(y) = S_0(\braket{x}{y}_A)\)
for \(x,y\in\Hilm_n\),
\(n,m\in\N_{\ge1}\),
\(n\in\N_{\ge1}\).

\subsection{Correspondences associated to graphs}
\label{sec:graph}

A (directed) graph is given by countable discrete sets \(V\)
and~\(E\)
of vertices and edges and maps \((r,s)\colon E\rightrightarrows V\)
sending an edge to its range and source.  It yields a
\(\Cont_0(V),\Cont_0(V)\)-correspondence
by completing \(\Contc(E)\) in the \(\Cont_0(V)\)-valued inner product
\[
\braket{x}{y}(v) \defeq
\sum_{e \in s^{-1}(v)} \overline{x(e)} y(e)
\]
for \(x,y \in \Contc(E)\) and \(v\in V\) or, equivalently,
\[
\braket{\delta_x}{\delta_y}_A
=
\begin{cases}
  \delta_{s_n(x)} &\quad\text{if } x=y,\\
  0 &\quad\text{otherwise;}
\end{cases}
\]
the
\(\Cont_0(V)\)-bimodule structure is given by
\[
(x\cdot a)(e) \defeq x(e)\cdot a(s(e)),\qquad
(a\cdot x)(e) \defeq a(r(e))\cdot x(e)
\]
for \(e\in E\),
\(a\in \Cont_0(V)\),
\(x\in \Contc(E)\).
This is a \(\Cst\)\nb-correspondence,
and its Cuntz--Pimsner algebra (as modified by Katsura) is the graph
\(\Cst\)\nb-algebra of our graph (see~\cite{Raeburn:Graph_algebras}).

Now consider two graphs with the same vertex set~\(V\),
with sets of edges \(E_1\)
and~\(E_2\)
and range and source maps \((r_i,s_i)\colon E_i \rightrightarrows V\)
for \(i=1,2\).
As above, we build two \(\Cont_0(V),\Cont_0(V)\)-correspondences
\(\Hilm_1\)
and~\(\Hilm_2\).
The composite correspondence \(\Hilm_1 \otimes_{\Cont_0(V)} \Hilm_2\)
is associated to the graph with edge set
\(E \defeq E_1 \times_{s,r} E_2\)
and \(r,s\colon E \rightrightarrows V\)
defined by \(r(f,g)\defeq r_1(f)\) and \(s(f,g) \defeq s_2(g)\).

Now let \(r_n,s_n\colon E_n\rightrightarrows V\)
for \(n\in\N_{>0}\)
be graphs with the same vertex set~\(V\).
Let \(\tilde\mu_{n,m}\colon E_n \times_{s,r} E_m \to E_{n+m}\)
be \emph{injective} maps for all \(n,m\in\N\),
which we write multiplicatively as
\(x\cdot y \defeq \tilde\mu_{n,m}(x,y)\)
for \(x\in E_n\),
\(y\in E_m\)
with \(s_n(x)=r_m(y)\).
Assume that \(r_{n+m}(x\cdot y) = r_n(x)\)
and \(s_{n+m}(x\cdot y) = s_m(y)\)
and that these multiplication maps are associative.  Let
\(E_0=V\)
and \(r_0=s_0=\id_V\)
and let the multiplication maps
\(\tilde\mu_{0,m}\colon V\times_{s,r} E_m \to E_m\)
and \(\tilde\mu_{m,0}\colon E_m\times_{s,r} V \to E_m\)
be the obvious maps \((r(x),x)\mapsto x\),
\((x,s(x))\mapsto x\).
We are going to build a weak partial product system out of this data.
The construction will also show that the assumptions above are
necessary to get a weak partial product system.

Let \(A\defeq \Cont_0(V)\)
and let~\(\Hilm_n\)
be the \(A,A\)-correspondence
associated to the graph~\(E_n\)
as above.  The characteristic functions \((\delta_x)_{x\in E_n}\)
form a basis in~\(\Contc(E_n)\),
which is dense in~\(\Hilm_n\).
So \((\delta_x\otimes \delta_y)_{x\in E_n,y\in E_m}\)
is a basis in \(\Contc(E_n) \odot \Contc(E_m)\),
which maps to a dense subset in \(\Hilm_n \otimes_A \Hilm_m\).
For \(x\in E_n\), \(y\in E_m\), define
\[
\mu_{n,m}(\delta_x \otimes \delta_y) \defeq
\begin{cases}
  \delta_{x\cdot y}&\text{if }s_n(x)= r_m(y),\\
  0&\text{otherwise.}
\end{cases}
\]

\begin{lemma}
  The map
  \(\mu_{n,m}\colon \Contc(E_n) \odot \Contc(E_m) \to
  \Contc(E_{n+m})\)
  is isometric for the \(\Cont_0(V)\)-valued
  inner product and extends uniquely to an isometric bimodule map
  \(\Hilm_n \otimes_A \Hilm_m \injto \Hilm_{n+m}\).
\end{lemma}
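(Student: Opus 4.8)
The plan is to check the inner-product identity on the vector-space basis $(\delta_x\otimes\delta_y)_{x\in E_n,\,y\in E_m}$ of $\Contc(E_n)\odot\Contc(E_m)$ — which also confirms that $\mu_{n,m}$ is well defined as a linear map — and then to propagate it to the completion by sesquilinearity and continuity. First I would evaluate the target pairing $\braket{\delta_x\otimes\delta_y}{\delta_{x'}\otimes\delta_{y'}}_A$ directly from~\eqref{IP} together with the formulas for the inner product and left action on the graph correspondences. Unwinding $\braket{\delta_x}{\delta_{x'}}_A=\delta_{s_n(x)}$ if $x=x'$ (and $0$ otherwise), then $\delta_{s_n(x)}\cdot\delta_{y'}=\delta_{y'}$ if $r_m(y')=s_n(x)$ (and $0$ otherwise), and finally $\braket{\delta_y}{\delta_{y'}}_A=\delta_{s_m(y)}$ if $y=y'$ (and $0$ otherwise), one finds that this pairing equals $\delta_{s_m(y)}$ when $x=x'$, $y=y'$ and $s_n(x)=r_m(y)$, and equals $0$ otherwise; in particular $\delta_x\otimes\delta_y$ represents $0$ in $\Hilm_n\otimes_A\Hilm_m$ whenever $s_n(x)\ne r_m(y)$.

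Next I would evaluate $\braket{\mu_{n,m}(\delta_x\otimes\delta_y)}{\mu_{n,m}(\delta_{x'}\otimes\delta_{y'})}_A$. If $s_n(x)\ne r_m(y)$ or $s_n(x')\ne r_m(y')$, one of the two images is $0$, so the pairing vanishes, and so does the target pairing, since $x=x'$ and $y=y'$ would force $s_n(x)=s_n(x')=r_m(y')=r_m(y)$. In the remaining case $s_n(x)=r_m(y)$ and $s_n(x')=r_m(y')$, the images are $\delta_{x\cdot y}$ and $\delta_{x'\cdot y'}$, and $\braket{\delta_{x\cdot y}}{\delta_{x'\cdot y'}}_A$ equals $\delta_{s_{n+m}(x\cdot y)}$ if $x\cdot y=x'\cdot y'$ and $0$ otherwise. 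Here the two standing hypotheses enter: $s_{n+m}(x\cdot y)=s_m(y)$ identifies the nonzero value with the required $\delta_{s_m(y)}$, and the \emph{injectivity} of $\tilde\mu_{n,m}$ on compatible pairs makes $x\cdot y=x'\cdot y'$ equivalent to $(x,y)=(x',y')$. So the two pairings agree on all basis tensors, hence — both being sesquilinear in $(\xi,\eta)$ — on all $\xi,\eta\in\Contc(E_n)\odot\Contc(E_m)$.

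Thus $\mu_{n,m}$ preserves the $\Cont_0(V)$-valued inner product; it therefore maps the kernel of that inner product to $0$ and is isometric for the induced norms, so it descends to the Hausdorff completion and extends uniquely by uniform continuity to an isometry $\Hilm_n\otimes_A\Hilm_m\injto\Hilm_{n+m}$. It remains to check that $\mu_{n,m}$ is an $A$-bimodule map, which I would again verify on basis tensors with $a=\delta_v$, $v\in V$: the identity $r_{n+m}(x\cdot y)=r_n(x)$ yields $\mu_{n,m}(\delta_v\cdot\delta_x\otimes\delta_y)=\delta_v\cdot\mu_{n,m}(\delta_x\otimes\delta_y)$, and $s_{n+m}(x\cdot y)=s_m(y)$ yields the analogue for the right action; linearity and density extend this to the completion. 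I expect no genuine difficulty here; the one place to be careful is the case bookkeeping in the two pairing computations, in particular matching the ``off-diagonal'' tensors that become $0$ in $\Hilm_n\otimes_A\Hilm_m$ against the correspondingly vanishing terms on the other side — everything else is routine once the injectivity of $\tilde\mu_{n,m}$ is used.
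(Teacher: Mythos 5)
Your proposal is correct and follows essentially the same route as the paper: compute both $\Cont_0(V)$-valued pairings on the basis tensors $\delta_x\otimes\delta_y$, match them using the injectivity of $\tilde\mu_{n,m}$ together with $s_{n+m}(x\cdot y)=s_m(y)$, extend by sesquilinearity and continuity to the completion, and deduce the bimodule property from $r_{n+m}(x\cdot y)=r_n(x)$ and $s_{n+m}(x\cdot y)=s_m(y)$. The case bookkeeping for incompatible pairs (where $s_n(x)\neq r_m(y)$) is handled in the paper by the convention $\delta_{x\cdot y}=0$ and Kronecker factors, which is the same content as your explicit case split.
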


\begin{proof}
  Let \(x,x'\in E_n\),
  \(y,y'\in E_m\).
  We write \(\delta_{a=b}\)
  for the Kronecker delta to distinguish it more clearly from our
  characteristic functions.  We declare that \(\delta_{x\cdot y}=0\)
  if \(s_n(x)\neq r_m(y)\),
  so that \(\mu_{n,m}(\delta_x\otimes \delta_y) = \delta_{x\cdot y}\)
  holds for all \(x\in E_n\),
  \(y\in E_m\).  On the one hand, we compute
  \begin{multline*}
    \braket[\big]{\mu_{n,m}(\delta_x\otimes \delta_y)}
    {\mu_{n,m}(\delta_{x'}\otimes \delta_{y'})}(v)
    = \braket{\delta_{x\cdot y}}{\delta_{x'\cdot y'}}(v)
    \\= \delta_{s(x)=r(y)} \cdot
    \delta_{s(x')=r(y')} \cdot
    \delta_{x\cdot x'=y\cdot y'} \cdot \delta_{s(x\cdot y)=v}.
  \end{multline*}
  On the other hand, we compute
  \begin{multline*}
    \braket[\big]{\delta_x\otimes \delta_y}
    {\delta_{x'}\otimes \delta_{y'}}(v)
    = \braket[\big]{\delta_y}{\braket{\delta_x}{\delta_{x'}}\delta_{y'}}(v)
    = \delta_{x=x'} \braket{\delta_y}{\delta_{s(x)} \delta_{y'}}(v)
    \\= \delta_{x=x'}\delta_{s(x)=r(y')} \braket{\delta_y}{\delta_{y'}}(v)
    = \delta_{x=x'}\delta_{s(x)=r(y')}\delta_{y=y'} \delta_{s(y)=v}.
  \end{multline*}
  Since~\(\tilde\mu_{n,m}\)
  is injective, \(x\cdot x' = y\cdot y'\)
  if and only if \(x=x'\)
  and \(y=y'\).
  We assumed \(s(x\cdot y)= s(y)\)
  as well.  So both expressions above are equal.  This shows
  that~\(\mu_{n,m}\)
  preserves the inner products between the elements
  \(\delta_x \otimes \delta_y\)
  and \(\delta_{x'} \otimes \delta_{y'}\)
  for \((x,y),(x',y')\in E_n \times_{s,r} E_m\).
  Hence it induces an isometry
  \(\Hilm_n \otimes_A \Hilm_m \injto \Hilm_{n+m}\).

  The condition \(r_{n+m}(x\cdot y)=r_n(x)\)
  for all \((x,y)\in E_n \times_{s,r} E_m\)
  implies that~\(\mu_{n,m}\)
  is a left module homomorphism.  The condition
  \(s_{n+m}(x\cdot y)=s_m(y)\)
  implies that it is a right module homomorphism.
\end{proof}

The associativity condition in~\eqref{assoeqn} is clearly equivalent
to
\[
(x\cdot y) \cdot z = x\cdot (y \cdot z)
\]
for all \(x\in E_n\),
\(y\in E_m\),
\(z\in E_l\),
\(n,m,l\in\N_{>0}\).
Thus we obtain a weak partial product system when we add this to our
requirements.  Our choices for \(E_0\),
\(\tilde\mu_{0,m}\)
and~\(\tilde\mu_{m,0}\)
say that \(\Hilm_0=\Hilm[A]\)
and that \(\mu_{0,m}\)
and~\(\mu_{m,0}\)
are the canonical maps
\(\Hilm[A] \otimes_A \Hilm_m \injto \Hilm_m\)
and \(\Hilm_m \otimes_A \Hilm[A] \congto \Hilm_m\)
as in Lemma~\ref{ass}.

The disjoint union \(E\defeq \bigsqcup_{n\in\N} E_n\)
is a category with object set~\(V\),
using the multiplication maps~\(\tilde\mu_{n,m}\)
for \(n,m\in\N\).
The decomposition of~\(E\)
as a disjoint union may be encoded by the functor from~\(E\)
to the monoid~\((\N,+)\) which
maps elements of~\(E_n\)
to~\(n\).
We want to describe a representation~\((\omega_n)_{n\in\N}\)
of the weak partial product system above through representations of
this category.  The representation~\(\omega_n\)
of~\(\Hilm_n\)
is given by the operators \(T_x\defeq \omega_n(\delta_x)\)
for \(x\in E_n\).
Since~\(\omega_0\)
is a representation of \(\Cont_0(V)\),
the operators~\(T_v\)
for \(v\in E_0=V\)
are orthogonal projections.
Conditions \ref{def:weak_representation_1}
and~\ref{def:weak_representation_2} in
Definition~\ref{def:weak_representation} say, first, that~\(T_x T_y\)
is \(T_{x\cdot y}\)
if \(s(x)=r(y)\),
and~\(0\)
otherwise and, secondly, that \(T_x^* T_y =\delta_{x,y} T_{s(y)}\)
if \(x,y\in E_n\)
for the same \(n\in\N\).
So each~\(T_x\)
is an isometry from \(T_{s(x)} B\subseteq B\)
into \(T_{r(x)} B\subseteq B\),
and the ranges of these isometries for \(x\in E_n\)
with fixed \(n\in\N\)
are orthogonal.  If~\(V\)
has only one element, then our category~\(E\)
becomes a monoid, and the map \(x\mapsto T_x\)
is a representation of this monoid by isometries with the extra
property that the isometries~\(T_x\)
for \(x\in E_n\) with fixed \(n\in\N\) have orthogonal ranges.

We now examine the Fock representation.  The Fock module~\(\Hilm[F]\)
is the Hilbert \(\Cont_0(V)\)-module
with basis \((\delta_x)_{x\in E}\)
with \(E=\bigsqcup E_n\)
as above, and with the inner product
\(\braket{\delta_x}{\delta_y} = \delta_{x=y} \delta_{s(x)}\).
The creation operator for \(x\in E\)
acts on~\(\Hilm[F]\)
by \(S_{\delta_x}(\delta_y) \defeq \delta_{x\cdot y}\)
for \(x,y\in E\).
Since the multiplication map on \(E_n \times_{s,r} E_m\)
is injective for each \(n,m\in\N\)
by assumption, the map~\(S_{\delta_x}\)
is a partial isometry with domain spanned by~\(\delta_y\)
with \(r(y)=s(x)\)
and image spanned by \(\delta_{x\cdot y}\)
for all such~\(y\).
This image is complementable, the complement being spanned by
those~\(\delta_y\)
with \(y\in E \setminus (x\cdot E)\).
So~\(S_{\delta_x}\) has the adjoint
\[
S_{\delta_x}^* (\delta_y) \defeq
\begin{cases}
  z&\text{if }y=x\cdot z\text{ for some }z\in E,\\
  0&\text{otherwise.}
\end{cases}
\]
Since the adjointable operators form a Banach space, it follows
that~\(S_\xi\)
is adjointable for all \(\xi\in \Hilm_n\).
Hence the Fock representation exists as a weak representation.

\section{Representations and partial product systems}
\label{sec:representation}

In this section, we restrict attention to weak partial product
systems for which the weak Fock representation exists.  We show that
the extra conditions for a representation,
\ref{def:weak_representation_4} and~\ref{def:weak_representation_5}
in Definition~\ref{def:weak_representation}, are equivalent
to~\eqref{eq:representation_condition}.  We show an example as in
Section~\ref{sec:graph} for which these conditions fail for the Fock
representation.  We define \emph{partial product systems} by
requiring that the Fock representation be defined and be a
representation.  We study the extra conditions needed for
representations for global product systems and Fell bundles.
Finally, we relate our notion of representation and partial product
system to Nica covariance and compact alignment for product systems
over quasi-lattice orders.

\begin{proposition}
  \label{simplifiability}
  Let \(\Hilm=(A,\Hilm_n,\mu_{n,m})_{n,m\in\N}\)
  be a weak partial product system for which the weak Fock
  representation exists, that is, the creation operators on its Fock
  module are adjointable.  Let \((\omega_n)_{n\in\N}\)
  be a weak representation of~\(\Hilm\)
  in a \(\Cst\)\nb-algebra~\(B\).
  Define \(\Theta_{m,n}\colon \Comp(\Hilm_m,\Hilm_n) \to B\)
  as in~\eqref{eq:def_Theta}.  The following are equivalent:
  \begin{enumerate}
  \item \label{simplifiability_1}%
    \(\omega_n(x)^* \omega_m(y) = \omega_{m-n}(S_n(x)^* y)\)
    for all \(n,m\in\N_{\ge1}\)
    with \(m>n\) and \(x\in \Hilm_n\), \(y\in \Hilm_m\);
  \item \label{simplifiability_2}%
    \(\omega_n(x)^* \omega_m(y) = \omega_{n-m}(S_m(y)^* x)^*\)
    for all \(n,m\in\N_{\ge1}\)
    with \(n>m\) and \(x\in \Hilm_n\), \(y\in \Hilm_m\);
  \item \label{simplifiability_3}%
    \(\omega_n(\Hilm_n)^* \cdot \omega_m(\Hilm_m) \subseteq
    \omega_{m-n}(\Hilm_{m-n})\)
    for all \(n,m\in\N_{>0}\)
    with \(m>n\);
  \item \label{simplifiability_4}%
    \(\omega_n(\Hilm_n)^* \cdot \omega_m(\Hilm_m) \subseteq
    \omega_{n-m}(\Hilm_{n-m})^*\)
    for all \(n,m\in\N_{>0}\)
    with \(m<n\);
  \item \label{simplifiability_5}%
    \(\omega_n(\Hilm_n)^* \cdot \omega_m(\Hilm_m) \cdot B \subseteq
    \omega_{m-n}(\Hilm_{m-n})\cdot B\)
    for all \(n,m\in\N\) with \(m>n\);
  \item \label{simplifiability_6}%
    \(\omega_n(\Hilm_n)^* \cdot \omega_m(\Hilm_m) \cdot B \subseteq
    \omega_{n-m}(\Hilm_{n-m})^*\cdot B\)
    for all \(n,m\in\N_{>0}\)
    with \(m<n\).
  \item \label{simplifiability_7}%
    If \(m,n,p,q\in\N\), then
    \[
    \Theta_{m,n}(\Comp(\Hilm_m,\Hilm_n)) \cdot
    \Theta_{p,q}(\Comp(\Hilm_p,\Hilm_q)) \subseteq
    \begin{cases}
      \Theta_{m-q+p,n}(\Comp(\Hilm_{m-q+p},\Hilm_n))&\text{if }m\ge q,\\
      \Theta_{p,n+q-m}(\Comp(\Hilm_p,\Hilm_{n+q-m}))&\text{if }m\le q.\\
    \end{cases}
    \]
  \end{enumerate}
  These equivalent conditions characterise
  when~\((\omega_n)_{n\in\N}\) is a representation.
\end{proposition}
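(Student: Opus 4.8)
**The plan is to prove the equivalences in a cycle or via a central condition, using the adjointability of the creation operators as the crucial extra input.** The natural strategy is to treat~\ref{simplifiability_1} (equivalently~\ref{simplifiability_2} by taking adjoints, since \((\omega_n(x)^*\omega_m(y))^* = \omega_m(y)^*\omega_n(x)\) and \(S_n(x)^*y \in \Hilm_{m-n}\)) as the primary condition, and show it is equivalent to each of the weaker-looking containments \ref{simplifiability_3}--\ref{simplifiability_6}, to the ``module'' reformulation~\ref{simplifiability_7}, and finally to conditions \ref{def:weak_representation_4}--\ref{def:weak_representation_5} in Definition~\ref{def:weak_representation}. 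I would organize the argument as: first \ref{simplifiability_1} \(\Leftrightarrow\) \ref{simplifiability_2}; then \ref{simplifiability_1} \(\Rightarrow\) \ref{simplifiability_3} \(\Rightarrow\) \ref{simplifiability_5} (trivially, multiply on the right by~\(B\)); then the key implication \ref{simplifiability_5} \(\Rightarrow\) \ref{simplifiability_1}; symmetrically for the pair \ref{simplifiability_2}, \ref{simplifiability_4}, \ref{simplifiability_6}; and finally handle~\ref{simplifiability_7} separately.

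First I would establish \ref{simplifiability_1} \(\Rightarrow\) \ref{simplifiability_7} by a direct computation: write \(\Theta_{m,n}(\ket{x}\bra{y})\cdot\Theta_{p,q}(\ket{z}\bra{w}) = \omega_n(x)\omega_m(y)^*\omega_q(z)\omega_p(w)^*\), apply~\ref{simplifiability_1} (or~\ref{simplifiability_2}, or~\ref{def:weak_representation_2}) to the middle product \(\omega_m(y)^*\omega_q(z)\) depending on whether \(m<q\), \(m>q\) or \(m=q\), and then use~\eqref{eq:Theta_multiplicative} and~\eqref{eq:Theta_multiplicative_0} to re-express the result as an element of the claimed \(\Theta\)-space. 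For the reverse direction, specialize~\ref{simplifiability_7} to rank-one operators with appropriate choices (e.g.\ \(q=n\) so the left factor becomes \(\omega_n(x)\), and \(p=0\) so \(\Theta_{0,n}(z^*) = \omega_n(z)^*\)) to recover the pointwise identities; one has to be a little careful that a containment of closed linear spans yields the pointwise formula, but this follows because \(\Theta_{m,n}\) restricted to rank-one operators already lands in the relevant span and the maps are determined on rank-ones.

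**The main obstacle is the implication \ref{simplifiability_5} \(\Rightarrow\) \ref{simplifiability_1}** (and its symmetric partner), i.e.\ upgrading a containment ``modulo right multiplication by~\(B\)'' to the exact operator identity. Here is how I would handle it. Fix \(n<m\), \(x\in\Hilm_n\), \(y\in\Hilm_m\). The point is that both sides of~\ref{simplifiability_1} have the same inner products against everything in~\(\omega_m(\Hilm_m)\): using condition~\ref{def:weak_representation_2} and~\eqref{eq:Theta_multiplicative_0}, for any \(y'\in\Hilm_m\) one computes
\[
\bigl(\omega_{m-n}(S_n(x)^*y)\bigr)^*\,\omega_m(y') \;=\; \omega_{m-n}(S_n(x)^*y)^*\,\omega_m(y'),
\]
and on the other hand \(\omega_n(x)^*\omega_m(y)\) paired the same way; the adjointability of \(S_n(x)\) on the Fock module is exactly what makes \(S_n(x)^*y \in \Hilm_{m-n}\) a legitimate element so that the right-hand side of~\ref{simplifiability_1} is even defined, and the defining property \(\braket{S_n(x)^* y}{y''} = \braket{y}{\mu_{n,m-n}(x\otimes y'')} = \braket{y}{x\cdot y''}\) lets one check the two candidate values for \(\omega_n(x)^*\omega_m(y)\) agree after multiplying on the right by any \(\omega_m(y'')\), hence on \(\omega_m(\Hilm_m)\cdot B\). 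Condition~\ref{simplifiability_5} then says \(\omega_n(x)^*\omega_m(y)\cdot b\) lies in \(\omega_{m-n}(\Hilm_{m-n})\cdot B\) for each \(b\); combining with an approximate identity in~\(B\) (or in the hereditary subalgebra generated by \(\omega_m(\Hilm_m)^*\omega_m(\Hilm_m)\)) and the fact that \(\omega_n(x)^*\omega_m(y) = \omega_n(x)^*\omega_m(y')\cdot\) something via polar-type decompositions in the correspondence~\(\Hilm_m\), one forces the equality. The cleanest route is probably: show \(\bigl(\omega_n(x)^*\omega_m(y) - \omega_{m-n}(S_n(x)^*y)\bigr)\cdot \omega_m(\Hilm_m) = 0\), deduce the difference~\(d\) satisfies \(d\,d^* \in\) (difference times its adjoint) which telescopes to \(0\) using \(d^* = \omega_m(y)^*\omega_n(x) - \omega_{m-n}(S_n(x)^*y)^*\) and the same inner-product bookkeeping, hence \(d=0\).

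Finally, for the closing sentence (``these conditions characterise when~\((\omega_n)\) is a representation'') I would simply observe that \ref{simplifiability_3} and~\ref{simplifiability_4} together are visibly equivalent to conditions~\ref{def:weak_representation_4} and~\ref{def:weak_representation_5} in Definition~\ref{def:weak_representation} (the latter contain an extra \(\cdot B\) on the right, which is the content of \ref{simplifiability_5}--\ref{simplifiability_6} versus \ref{simplifiability_3}--\ref{simplifiability_4}, and the equivalence of those pairs is part of the chain already proved), so a weak representation is a representation precisely when the equivalent conditions hold. The one routine check I would spell out is that \ref{simplifiability_3} implies the image \(\omega_{m-n}(\Hilm_{m-n})\) is hit exactly, not just \(\omega_{m-n}(\Hilm_{m-n})\cdot B\); this is where replacing~\(\Hilm_{m-n}\) by \(\Hilm_{m-n}\cdot A\) and using~\eqref{eq:Theta_multiplicative_0} closes the gap, since \(\omega_{m-n}(\Hilm_{m-n})\) is already a closed subspace.
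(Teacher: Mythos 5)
Your overall architecture is the same as the paper's: get \ref{simplifiability_1}$\Leftrightarrow$\ref{simplifiability_2} and \ref{simplifiability_3}$\Leftrightarrow$\ref{simplifiability_4} by taking adjoints, note that \ref{simplifiability_1}$\Rightarrow$\ref{simplifiability_3}$\Rightarrow$\ref{simplifiability_5} is trivial, make \ref{simplifiability_5}$\Rightarrow$\ref{simplifiability_1} the pivotal step, and handle \ref{simplifiability_7} by a rank-one computation on the middle factor plus specialisation. The treatment of \ref{simplifiability_7} and of the closing sentence is fine (modulo a reversed index: it is \(\Theta_{n,0}\), not \(\Theta_{0,n}\), that produces \(\omega_n(\Hilm_n)^*\)). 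But the pivotal step as you describe it would fail. You propose to show that \(d \defeq \omega_n(x)^*\omega_m(y)-\omega_{m-n}(S_n(x)^*y)\) satisfies \(d\cdot\omega_m(\Hilm_m)=0\) and then to ``telescope'' \(dd^*\). Neither half works: for \(w\in\Hilm_m\) one has \(d\,\omega_m(w)=\omega_n(x)^*\omega_{2m}(y\cdot w)-\omega_{2m-n}\bigl((S_n(x)^*y)\cdot w\bigr)\), and making this vanish requires condition~\ref{simplifiability_1} for the pair \((n,2m)\) together with \(S_n(x)^*S_m(y)=S_{m-n}(S_n(x)^*y)\) --- the first is circular and the second is not assumed here, since \(\Hilm\) is only required to have an adjointable weak Fock representation. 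The same degree confusion appears earlier in the paragraph: the element \(y''\) in your identity \(\braket{S_n(x)^*y}{y''}=\braket{y}{x\cdot y''}\) lives in \(\Hilm_{m-n}\), so it cannot be fed into \(\omega_m\); and pairing \(\omega_n(x)^*\omega_m(y)\) against \(\omega_m(w)\) produces \(\omega_{n+m}(x\cdot w)^*\omega_m(y)\), an inner product between different degrees that a weak representation does not let you evaluate. Even granting \(d\cdot\omega_m(\Hilm_m)=0\), the term \(\omega_m(y)^*\omega_n(x)\) in \(d^*\) does not lie in \(\omega_m(\Hilm_m)\cdot B\), so \(dd^*\) does not telescope.

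The correct pairing is on the left against \(\omega_{m-n}(\Hilm_{m-n})^*\), which is what the paper does: for \(z\in\Hilm_{m-n}\),
\[
\omega_{m-n}(z)^*\,\omega_n(x)^*\omega_m(y)
=\omega_m(x\cdot z)^*\omega_m(y)
=\omega_0(\braket{x\cdot z}{y})
=\omega_0(\braket{z}{S_n(x)^*y})
=\omega_{m-n}(z)^*\,\omega_{m-n}(S_n(x)^*y),
\]
using only the weak representation axioms and the defining property of the adjoint. Hence \(d^*\cdot\omega_{m-n}(\Hilm_{m-n})\cdot B=0\). Condition~\ref{simplifiability_5} gives \(d\cdot B\subseteq\omega_{m-n}(\Hilm_{m-n})\cdot B\) (the summand \(\omega_{m-n}(S_n(x)^*y)\) lands there trivially), so \(d^*d\cdot B=0\) and therefore \(d=0\). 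So the ingredients you list --- adjointability of \(S_n(x)\), condition~\ref{simplifiability_5}, a \(\Cst\)\nb-identity --- are exactly the right ones, but the multiplication must happen on the other side and against \(\Hilm_{m-n}\) rather than \(\Hilm_m\), and the conclusion goes through \(d^*d\), not \(dd^*\).
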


\begin{proof}
  The condition in \ref{simplifiability_2} is the adjoint of the
  condition in~\ref{simplifiability_1}.  So these two conditions are
  equivalent.  Similarly, \ref{simplifiability_3}
  and~\ref{simplifiability_4} are equivalent.  The implications
  \[
  {\ref{simplifiability_1}}
  \Longrightarrow
  {\ref{simplifiability_3}}
  \Longrightarrow
  {\ref{simplifiability_5}},\qquad
  {\ref{simplifiability_2}}
  \Longrightarrow
  {\ref{simplifiability_4}}
  \Longrightarrow
  {\ref{simplifiability_6}}
  \]
  are trivial.  We are going to show that~\ref{simplifiability_5}
  implies~\ref{simplifiability_1}.  An analogous argument shows
  that~\ref{simplifiability_6} implies~\ref{simplifiability_2}.  This
  will complete the proof that conditions
  \ref{simplifiability_1}--\ref{simplifiability_6} are equivalent.
  A representation is a weak representation that also satisfies the
  conditions \ref{simplifiability_5} and~\ref{simplifiability_6}.
  Hence each of our equivalent conditions characterises representations
  among weak representations.

  Let \(z\in \Hilm_{m-n}\).  Then
  \begin{multline*}
    \omega_{m-n}(z)^*\omega_{m-n}(S_n(x)^*(y))
    = \omega_0(\braket{z}{S_n(x)^*(y)})
    = \omega_0(\braket{S_n(x)(z)}{y})
    \\ = \omega_0(\braket{x\cdot z}{y})
    = \omega_m(x\cdot z)^*\omega_m(y)
    = \omega_{m-n}(z)^*\omega_n(x)^*\omega_m(y).
  \end{multline*}
  Letting
  \[
  X\defeq \omega_{m-n}(S_n(x)^*(y)) - \omega_n(x)^*\omega_m(y),
  \]
  this becomes \(\omega_{m-n}(z)^* \cdot X = 0\)
  for all \(z\in \Hilm_{m-n}\)
  or \(X^*\cdot \omega_{m-n}(\Hilm_{m-n})\cdot B = 0\).
  Condition~\ref{simplifiability_5} implies
  \(X\cdot B\subseteq \omega_{m-n}(\Hilm_{m-n}) B\).
  Thus \(X^*\cdot X\cdot B=0\).  Hence \(X=0\).

  Condition~\ref{simplifiability_7} contains \ref{simplifiability_3}
  and~\ref{simplifiability_4} as special cases because
  \[
  \omega_n(\Hilm_n) = \Theta_{0,n}(\Comp(\Hilm_0,\Hilm_n)),\qquad
  \omega_n(\Hilm_n)^* = \Theta_{n,0}(\Comp(\Hilm_n,\Hilm_0)).
  \]
  It remains to prove that \ref{simplifiability_3} implies
  \ref{simplifiability_7}.  By definition,
  \[
  \Theta_{m,n}(\Comp(\Hilm_m,\Hilm_n)) =
  \omega_n(\Hilm_n) \omega_m(\Hilm_m)^*.
  \]
  Hence
  \begin{align*}
    \Theta_{m,n}(\Comp(\Hilm_m,\Hilm_n)) \cdot
    \Theta_{p,q}(\Comp(\Hilm_p,\Hilm_q))
    = \omega_n(\Hilm_n) \cdot \bigl(\omega_m(\Hilm_m)^*
    \omega_q(\Hilm_q)\bigr) \cdot \omega_p(\Hilm_p)^*.
  \end{align*}
  Now we apply~\ref{simplifiability_3} to the two
  factors in the middle.  If \(m>q\), then
  \begin{multline*}
    \omega_n(\Hilm_n) \omega_m(\Hilm_m)^* \omega_q(\Hilm_q)
    \omega_p(\Hilm_p)^*
    \subseteq \omega_n(\Hilm_n) \omega_{m-q}(\Hilm_{m-q})^*
    \omega_p(\Hilm_p)^*
    \\\subseteq \omega_n(\Hilm_n) \omega_{m-q+p}(\Hilm_{m-q+p})^*
    = \Theta_{m-q+p,n}(\Comp(\Hilm_{m-q+p},\Hilm_n)),
  \end{multline*}
  both for \(q>0\) and \(q=0\).  If \(m<q\), then
  \begin{multline*}
    \omega_n(\Hilm_n) \omega_m(\Hilm_m)^* \omega_q(\Hilm_q)
    \omega_p(\Hilm_p)^*
    \subseteq \omega_n(\Hilm_n) \omega_{q-m}(\Hilm_{q-m})
    \omega_p(\Hilm_p)^*
    \\\subseteq \omega_{n+q-m}(\Hilm_{n+q-m}) \omega_p(\Hilm_p)^*
    = \Theta_{p,n+q-m}(\Comp(\Hilm_p,\Hilm_{n+q-m})),
  \end{multline*}
  both for \(m>0\)
  and \(m=0\).
  The case \(m=q\)
  also works.  Thus~\ref{simplifiability_3}
  implies~\ref{simplifiability_7}.
\end{proof}

Equation~\eqref{eq:Theta_multiplicative} and
condition~\ref{simplifiability_7} in
Proposition~\ref{simplifiability} imply that
\[
\sum_{m,n=0}^\infty \Theta_{m,n}(\Comp(\Hilm_m,\Hilm_n))\subseteq B
\]
is a \Star{}subalgebra for any representation.  For \(n=m\)
and \(p=q\),
the condition~\ref{simplifiability_7} in
Proposition~\ref{simplifiability} says that
\begin{equation}
  \label{eq:multiply_Theta_nn}
  \Theta_n(\Comp(\Hilm_n)) \cdot \Theta_p(\Comp(\Hilm_p)) \subseteq
  \Theta_{\max \{n,p\}}(\Comp(\Hilm_{\max \{n,p\}}));
\end{equation}
here we abbreviated \(\Theta_n \defeq \Theta_{n,m}\).
So \(\sum_{n=0}^N \Theta_n(\Comp(\Hilm_n))\)
is a \Star{}subalgebra as well for all \(N\in\N\cup\{\infty\}\).

The conditions \ref{simplifiability_5} and~\ref{simplifiability_6}
concern inclusions between certain right ideals (or Hilbert
submodules) of~\(B\).
Hence they are similar to nondegeneracy conditions.  Thus
Proposition~\ref{simplifiability} is similar in spirit to
\cite{Albandik-Meyer:Product}*{Proposition~2.5}.

A \emph{partial product system} is a weak partial product system for
which the weak Fock representation exists and is a representation,
that is, satisfies the equivalent conditions in
Proposition~\ref{simplifiability}.  These assumptions will be used in
the next section to define and study the Toeplitz algebra of a partial
product system.

We now examine the difference between weak representations and
representations for several classes of weak partial product systems.
First, we examine product systems, then restrictions of Fell bundles
over~\(\Z\).
Finally, we study examples coming from graphs as in
Section~\ref{sec:graph}.

A \emph{product system} is a weak partial product system
\((A,\Hilm_n,\mu_{n,m})_{n,m\in\N}\)
where the multiplication maps
\(\mu_{n,m}\colon \Hilm_n \otimes_A \Hilm_m \injto \Hilm_{n+m}\)
for \(n,m\in\N_{>0}\) are unitary.

\begin{proposition}
  \label{pro:simplifiable_global}
  Any weak representation of a product system is a representation.
  Product systems are partial product systems.
\end{proposition}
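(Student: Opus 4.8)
The plan is to reduce both assertions to the first.  For a product system every multiplication map \(\mu_{n,m}\) with \(n,m\ge1\) is unitary, hence adjointable, so by condition~\ref{cond:mu_nm_adjointable} the creation operators \(S_{n,m}(x)\) on the Fock module are adjointable and the weak Fock representation \((S_n)_{n\in\N}\) exists.  Once we know that \emph{every} weak representation of a product system is a representation, this applies in particular to \((S_n)_{n\in\N}\), which is then a representation; by Definition~\ref{def:partial_product_system} this says precisely that the product system is a partial product system.  So it remains to prove: if \((\omega_n)_{n\in\N}\) is a weak representation of a product system \((A,\Hilm_n,\mu_{n,m})_{n,m\in\N}\) in a \(\Cst\)\nb-algebra~\(B\), then it is a representation.

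Since the weak Fock representation exists, Proposition~\ref{simplifiability} applies, so it suffices to verify one of its equivalent conditions, and I would aim for~\ref{simplifiability_3}: for \(n,m\in\N_{>0}\) with \(m>n\) we want \(\omega_n(\Hilm_n)^*\cdot\omega_m(\Hilm_m)\subseteq\omega_{m-n}(\Hilm_{m-n})\).  Here \(m-n\ge1\), and unitarity of \(\mu_{n,m-n}\colon\Hilm_n\otimes_A\Hilm_{m-n}\to\Hilm_m\) means that the elements \(\mu_{n,m-n}(x'\otimes z)\) with \(x'\in\Hilm_n\), \(z\in\Hilm_{m-n}\) span a dense subspace of \(\Hilm_m\).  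For \(x\in\Hilm_n\) and such \(x',z\), conditions \ref{def:weak_representation_1} and~\ref{def:weak_representation_2} in Definition~\ref{def:weak_representation} give
\[
\omega_n(x)^*\,\omega_m\bigl(\mu_{n,m-n}(x'\otimes z)\bigr)
= \omega_n(x)^*\omega_n(x')\,\omega_{m-n}(z)
= \omega_0(\braket{x}{x'})\,\omega_{m-n}(z)
= \omega_{m-n}(\braket{x}{x'}\cdot z),
\]
where the last equality is condition~\ref{def:weak_representation_1} for the pair \((0,m-n)\) together with the identification of \(\mu_{0,m-n}\) with the left \(A\)\nb-action.  Hence each such product lies in \(\omega_{m-n}(\Hilm_{m-n})\).

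It remains to pass from these dense generators to the full closed linear span \(\omega_n(\Hilm_n)^*\cdot\omega_m(\Hilm_m)\).  For this I would use that \(\omega_{m-n}\) has closed range: by Lemma~\ref{lem:faithful_A_Hilm_Comp} one has \(\ker\omega_{m-n}=\Hilm_{m-n}\cdot\ker\omega_0\), and the identity \(\norm{\omega_{m-n}(z)}^2=\norm{\omega_0(\braket{z}{z})}\) shows that \(\omega_{m-n}\) becomes isometric on the quotient Hilbert module \(\Hilm_{m-n}/(\Hilm_{m-n}\cdot\ker\omega_0)\) over \(A/\ker\omega_0\), so its image is complete and thus closed in \(B\).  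Together with continuity of \(\omega_m\) and of multiplication, the displayed identity then yields condition~\ref{simplifiability_3}, and Proposition~\ref{simplifiability} completes the proof.  (One may sidestep the closed-range remark by checking condition~\ref{simplifiability_5} instead, whose right-hand side \(\omega_{m-n}(\Hilm_{m-n})\cdot B\) is closed by the standing convention; by the proposition this is equally good.)  I do not anticipate a serious obstacle; the only point needing some care is precisely this passage from dense generators to closed spans, everything else being a direct unwinding of the axioms via unitarity of the multiplication maps.
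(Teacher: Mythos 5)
Your proposal is correct and follows essentially the same route as the paper: both use unitarity of $\mu_{n,m-n}$ to write $\omega_m(\Hilm_m)$ as $\omega_n(\Hilm_n)\cdot\omega_{m-n}(\Hilm_{m-n})$ and then reduce $\omega_n(x)^*\omega_n(x')\omega_{m-n}(z)$ to $\omega_{m-n}(\braket{x}{x'}z)$ via the weak-representation axioms, verifying one of the equivalent conditions of Proposition~\ref{simplifiability}, with the partial-product-system claim following because the unitary multiplication maps make the weak Fock representation exist. Your extra care about the closed range of $\omega_{m-n}$ (or the retreat to condition~\ref{simplifiability_5}) is a legitimate tidying of a point the paper's closed-linear-span convention leaves implicit.
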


\begin{proof}
  Let \((A,\Hilm_n,\mu_{n,m})_{n,m\in\N}\)
  be a product system and let \((\omega_n)_{n\in\N}\)
  be a weak representation of it in a \(\Cst\)\nb-algebra~\(B\).
  Let \(n,k\in\N_{>0}\).
  Then
  \(\omega_n(\Hilm_n)\cdot \omega_k(\Hilm_k) =
  \omega_{n+k}(\Hilm_{n+k})\),
  that is, the closed linear span of
  \(\omega_n(\xi)\cdot \omega_k(\eta)\)
  for \(\xi\in\Hilm_n\),
  \(\eta\in\Hilm_k\)
  is dense in \(\omega_{n+k}(\Hilm_{n+k})\).
  If \(m>n\), write \(m=n+k\).  Then
  \[
  \omega_n(\Hilm_n)^* \cdot \omega_m(\Hilm_m)
  = \omega_n(\Hilm_n)^* \cdot \omega_n(\Hilm_n)\cdot \omega_k(\Hilm_k)
  \subseteq \omega_0(A)\cdot \omega_k(\Hilm_k)
  \subseteq \omega_{m-n}(\Hilm_{m-n}).
  \]
  So any weak representation satisfies the
  condition~\ref{simplifiability_1} in
  Proposition~\ref{simplifiability}.  The Fock representation exists
  as a weak representation because the maps~\(\mu_{n,m}\)
  for \(n>0\)
  are adjointable.  It is a representation by the statement already
  proved.  That is, our product system is a partial product system.
\end{proof}

A \emph{Fell bundle} over the group~\(\Z\)
is given by Banach spaces~\(B_n\)
for \(n\in\Z\)
with multiplication maps \(B_n \times B_m \to B_{n+m}\)
and involutions \(B_n \to B_{-n}\)
with certain properties (see
\cite{Exel:Partial_dynamical}*{Definition~16.1}).  These properties are
equivalent to the existence of injective maps \(B_n \injto B\)
for some \(\Cst\)\nb-algebra~\(B\)
such that the multiplication maps and involutions
in~\((B_n)_{n\in\N}\)
are the restrictions of the multiplication and involution in~\(B\).
In particular, \(B_0\)
is a \(\Cst\)\nb-algebra.
Each~\(B_n\)
becomes a Hilbert \(B_0\)\nb-bimodule
by \(\BRAKET{b}{c} \defeq b c^*\)
and \(\braket{b}{c} \defeq b^* c\)
for all \(b,c\in B_n\).
The Fell bundle multiplication maps \(B_n \times B_m \to B_{n+m}\)
induce isometries
\(\mu_{n,m}\colon B_n \otimes_{B_0} B_m \injto B_{n+m}\)
of \(B_0,B_0\)\nb-correspondences
for all \(n,m\in\N\).

\begin{proposition}
  \label{pro:pps_from_Fell}
  Let \(\mathcal{B}= (B_n)_{n\in\Z}\)
  be a Fell bundle.  Its restriction to~\(\N\)
  is a partial product system; here each~\(B_n\)
  is viewed as a \(B_0,B_0\)-correspondence,
  even a Hilbert \(B_0\)\nb-bimodule,
  as above.
\end{proposition}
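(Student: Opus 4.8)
\section*{Proof proposal}

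The plan is to verify the two extra requirements in the definition of a partial product system for the restriction of $\mathcal{B}$ to $\N$: that the weak Fock representation exists, and that it is a representation in the sense of Proposition~\ref{simplifiability}. That the restriction is a weak partial product system is already contained in the discussion preceding the statement: the maps $\mu_{n,m}$ are isometric bimodule maps, the associativity squares~\eqref{assoeqn} commute because the Fell bundle multiplication is associative, and $\mu_{0,m}$, $\mu_{m,0}$ are the canonical maps because $B_0$ acts on $B_n$ through the bimodule structure. I would only recall this briefly.

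For the weak Fock representation I would invoke the third of the sufficient conditions for adjointability recalled in Section~\ref{sec:weak}. Concretely, for $x\in B_n$, $y\in B_m$ and $z\in B_{n+m}$ one has $\braket{z}{xy}_{B_0} = z^*xy = (x^*z)^*y = \braket{x^*z}{y}_{B_0}$, so the creation operator $S_{n,m}(x)\colon B_m\to B_{n+m}$, $y\mapsto xy$, is adjointable with adjoint the left multiplication $z\mapsto x^*z$ by $x^*\in B_{-n}$. These assemble into a bounded adjointable operator $S_n(x)$ on $\Hilm[F]=\bigoplus_{k\ge0}B_k$; its adjoint $S_n(x)^*$ maps the summand $B_k$ into $B_{k-n}$ by $z\mapsto x^*z$ when $k\ge n$, and to $0$ when $k<n$. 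Hence the weak Fock representation exists, and the $S_n$ form a weak representation of the restricted system.

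It then remains to verify condition~\ref{simplifiability_1} of Proposition~\ref{simplifiability}. Fix $n,m\in\N_{\ge1}$ with $m>n$, $x\in B_n$, $y\in B_m$. Since $m\ge n$, the formula above gives $S_n(x)^*y=x^*y\in B_{m-n}$. Applying both operators in the claimed identity to an element $z$ of a summand $B_k$ of $\Hilm[F]$ gives, on the left, $S_n(x)^*S_m(y)z=x^*(yz)\in B_{m+k-n}$ (here $m+k\ge n$ holds automatically, so nothing is truncated) and, on the right, $S_{m-n}(x^*y)z=(x^*y)z\in B_{(m-n)+k}$. These coincide by associativity of the Fell bundle multiplication, and the degrees match, so $S_n(x)^*S_m(y)=S_{m-n}(S_n(x)^*y)$. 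By Proposition~\ref{simplifiability} the Fock representation is therefore a representation, and the restriction of $\mathcal{B}$ to $\N$ is a partial product system. There is no real obstacle in this argument; the only point requiring care is the bookkeeping of the $\Z$\nb-grading, in particular the observation that the hypothesis $m>n$ rules out any truncation in the composite $S_n(x)^*S_m(y)$.
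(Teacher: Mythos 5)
Your proof is correct and follows essentially the same route as the paper: the adjoint of the creation operator is left multiplication by $x^*\in B_{-n}$, and condition~\ref{simplifiability_1} of Proposition~\ref{simplifiability} then reduces to associativity of the Fell bundle multiplication, $x^*(yz)=(x^*y)z$. Your additional bookkeeping of the grading (where $S_n(x)^*$ vanishes, and why no truncation occurs when $m>n$) is a harmless elaboration of what the paper leaves implicit.
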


\begin{proof}
  The creation operator
  \(S_n(x)\colon B_m \to B_{n+m}\),
  \(y\mapsto x\cdot y\),
  for \(x\in B_n\)
  is adjointable with adjoint \(S_n(x)^*(y) = x^*\cdot y \).  So
  \[
  S_n(x)^* S_m(y) z
  = x^*\cdot (y\cdot z)
  = (x^*\cdot y)\cdot z
  = S_{m-n}(S_n(x)^*y)(z)
  \]
  for \(n,m,k\in\N\)
  with \(n\le m\)
  and \(x\in B_n\),
  \(y\in B_m\),
  \(z\in B_k\).
  Thus the Fock representation~\((S_n)_{n\in\N}\)
  satisfies condition~\ref{simplifiability_1} in
  Proposition~\ref{simplifiability}.  So it is a representation and we
  have got a partial product system over~\(\N\).
\end{proof}

\begin{definition}
  \label{fbrep}
  Let~\(D\) be a \(\Cst\)\nb-algebra.
  A \emph{representation} of a Fell bundle
  \(\mathcal{B}= (B_n)_{n\in\Z}\)
  in~\(D\)
  consists of linear maps \(\omega_n\colon B_n \to D\)
  for \(n\in\Z\) such that
  \begin{enumerate}
  \item \label{fbrep1}%
    \(\omega_0\colon B_1\to D\)
    is a \Star{}homomorphism;
  \item \label{fbrep2}%
    \(\omega_n(b) \omega_m(c) = \omega_{n+m} (bc)\)
    for all \(n,m\in\Z\), \(b\in B_n\), \(c\in B_m\);
  \item \(\omega_{-n}(b^*) = \omega_n(b)^*\)
    for all \(n\in\Z\), \(b\in B_n\).
  \end{enumerate}
\end{definition}

It is immediate from the definitions that the restriction of a
representation of a Fell bundle over~\(\Z\)
to~\(\N\)
is a representation of the partial product system over~\(\N\).
The Fock representation, however, is never the restriction of a Fell
bundle representation.  So a Fell bundle over~\(\Z\)
has strictly fewer representations than its partial product system
restriction to~\(\N\).

When is a (weak) partial product system over~\(\N\)
the restriction of a Fell bundle over~\(\Z\)?
We shall answer this question in Theorem~\ref{the:extend_pps_to_Fell}.
In this section, we only study some easy aspects of this question.  A
necessary condition is that each~\(\Hilm_n\)
be a Hilbert \(A\)\nb-bimodule.
The following example show that this is not yet sufficient, even if we
also assume~\(\Hilm\) to be a partial product system.

\begin{example}
  \label{exa:Hilbi_bad_1}
  Let \(A=\C\oplus\C\)
  and let \(\varphi\)
  be the partial isomorphism on~\(A\)
  that maps the first summand identically onto the second summand.
  This partial isomorphism corresponds to the Hilbert
  \(A\)\nb-bimodule
  \(\Hilm[E]_\varphi=\C\)
  with the \(A\)\nb-bimodule
  structure
  \((a_1,a_2)\cdot x\cdot (b_1,b_2) \defeq a_2\cdot x\cdot b_1\)
  and the left and right inner products
  \(\BRAKET{x}{y} \defeq (0,x \overline{y})\),
  \(\braket{x}{y} \defeq (\overline{x} y,0)\)
  for \(a_1,a_2,x,y,b_1,b_2\in\C\).
  Since the range and source ideals of~\(\Hilm_\varphi\)
  are orthogonal, \(\Hilm_\varphi \otimes_A \Hilm_\varphi \cong 0\).
  Let \(\Hilm_n \defeq \Hilm_\varphi\)
  for all \(n\in\N_{\ge1}\)
  and let
  \(\mu_{n,m}\colon \Hilm_n \otimes_A \Hilm_m \cong 0 \injto
  \Hilm_{n+m}\)
  be the zero map for all \(n,m\in\N_{\ge1}\).
  We claim that this is a partial product system over~\(\N\).
  All its fibres are Hilbert \(A\)\nb-bimodules.
  The maps~\(\mu_{n,m}\)
  are isometric and associative.  The creation operator~\(S_n(x)\)
  for \(x\in\Hilm_n\)
  vanishes on~\(\Hilm_m\)
  for \(m\in\N_{\ge1}\)
  and maps \((b_1,b_2)\in \Hilm_0 = A\)
  to \(x\cdot b_1\in\Hilm_n\).
  Hence \(S_n(x)\)
  is adjointable, and its adjoint vanishes on~\(\Hilm_m\)
  for all \(m\neq n\).
  Thus the Fock representation satisfies
  condition~\ref{simplifiability_1} in
  Proposition~\ref{simplifiability}.  So we have a partial product
  system of Hilbert bimodules.  It cannot come from a Fell bundle
  over~\(\Z\),
  however.  A Fell bundle also has injective multiplication maps
  \[
  \Hilm_n^* \otimes_A \Hilm_m \cong \Hilm_{-n} \otimes_A \Hilm_m
  \injto \Hilm_{m-n}.
  \]
  We have
  \(\Hilm_n^* \otimes_A \Hilm_m = \Hilm_\varphi^* \otimes_A
  \Hilm_\varphi\),
  which is the Hilbert bimodule that corresponds to the identity map
  on the first summand in~\(A\).
  Since there is no non-zero map from this to~\(\Hilm_\varphi\),
  there is no multiplication map
  \(\Hilm_n^* \otimes_A \Hilm_m \injto \Hilm_{m-n}\) for \(m>n\).
\end{example}

\begin{remark}
  Any product system of Hilbert bimodules is the restriction of a Fell
  bundle over~\(\Z\).
  Indeed, a product system is determined by the correspondence
  \(\Hilm_1\),
  and so a product system of Hilbert bimodules is given by a single
  Hilbert bimodule.  The Fell bundle generated by it is described
  in~\cite{Abadie-Eilers-Exel:Morita_bimodules}.  This does not yet give
  all Fell bundles over~\(\Z\),
  however: we only get those Fell bundles that are semi-saturated, that
  is, have surjective multiplication maps~\(\mu_{n,m}\)
  for all \(n,m\ge0\).
\end{remark}

There is a weak partial product system of the type introduced in
Section~\ref{sec:graph} for which the weak Fock representation
exists but is not a representation:

\begin{example}
  \label{exa:graph_not_pp}
  Let
  \[
  E_0=V=\{v_0,v_1,v_2,v_3\},\quad
  E_1 = \{a\},\quad
  E_2 = \{c,d\},\quad
  E_3 = \{b\},\quad
  E_4 = \{e\},\quad
  \]
  and \(E_n=\emptyset\)
  for \(n\ge 5\)
  with \(a\colon v_1 \to v_3\),
  \(b\colon v_0\to v_1\),
  \(c\colon v_2\to v_3\),
  \(d\colon v_0 \to v_2\),
  and \(e=a\cdot b = c\cdot d\).
  This determines the range and source maps
  \(r_n,s_n\colon E_n\rightrightarrows V\)
  and the multiplication maps
  \(\tilde\mu_{n,m}\colon E_n \times_{s_n,r_m} E_m \injto E_{n+m}\)
  for all \(n,m\in\N\).
  The category \(\bigsqcup_{n\in\N} E_n\)
  is the one that describes commutative squares:
  \[
  \begin{tikzcd}
    v_0 \arrow[r, "b"] \arrow[d, "d"'] \arrow[dr, "e"] & v_1 \arrow[d, "a"] \\
    v_2 \arrow[r, "c"'] & v_3,
  \end{tikzcd}\qquad e = a\circ b = c\circ d.
  \]
  The resulting \(\Cst\)\nb-algebra
  is \(A=\C[V] = \C^4\).
  The Fock module over~\(A\) is the \(\C\)\nb-vector space with basis
  \[
  E \defeq \{v_0,v_1,v_2,v_3,a,b,c,d,e\}
  \]
  with the \(A\)\nb-bimodule
  structure given by the range and source maps and with
  \(\braket{x}{y} \defeq \delta_{x=y} s(x)\)
  for all \(x,y\in E\).
  The Fock representation of the weak partial product system is given
  on the basis vectors by the following table:
  \[
  \begin{array}{*{10}{c}}
    &v_0&v_1&v_2&v_3&a&b&c&d&e\\ \hline
    S_1(a)&0&a&0&0&0&e& 0&0&0\\
    S_2(c)&0&0&c&0&0&0& 0&e&0\\
    S_2(d)&d&0&0&0&0&0& 0&0&0\\
    S_3(b)&b&0&0&0&0&0& 0&0&0\\
    S_4(e)&e&0&0&0&0&0& 0&0&0\\
  \end{array}
  \]
  So \(S_1(\Hilm_1)\Hilm[F] =\C[a, e]\),
  \(S_2(\Hilm_2)\Hilm[F] = \C[c,d,e]\),
  and \(S_1(\Hilm_1)^*(S_2(\Hilm_2)\Hilm[F])=\C[b]\).
  This is not contained in \(S_1(\Hilm_1)\Hilm[F]\).
\end{example}

\begin{proposition}
  \label{pro:graph_pps}
  A weak partial product system of the form built in
  Section~\textup{\ref{sec:graph}} is a partial product system if and
  only if \(E=\bigsqcup_{n\in\N} E_n\)
  is the path category of some directed graph~\(\Gamma\),
  but where generators in~\(\Gamma\)
  may have degrees different from~\(1\).
\end{proposition}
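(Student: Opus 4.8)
The plan is to translate the condition that makes the graph-type system a partial product system---namely, that its weak Fock representation, which always exists by Section~\ref{sec:graph}, satisfies condition~\ref{simplifiability_1} in Proposition~\ref{simplifiability}---into a purely combinatorial statement about the category $E=\bigsqcup_{n\in\N}E_n$, and then to recognise that statement as a freeness condition. Concretely, I would evaluate $S_n(\delta_x)^*S_m(\delta_y)$ and $S_{m-n}(S_n(\delta_x)^*\delta_y)$ on a basis vector $\delta_w$ of the Fock module, for $x\in E_n$, $y\in E_m$ with $n<m$ and $w\in E$, using the explicit formulas for $S_{\delta_x}$ and $S_{\delta_x}^*$ together with the left cancellation that follows from injectivity of the~$\tilde\mu_{n,m}$. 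I expect the two expressions to coincide for all such $w$ precisely when: \emph{whenever $y\cdot w=x\cdot z$ in~$E$ for some $w,z\in E$, there is $u\in E$ with $y=x\cdot u$.} Ranging over all $n<m$, this says that the weak partial product system is a partial product system if and only if, for all $x,y\in E$ with the degree of~$x$ strictly less than that of~$y$, any common right multiple $y\cdot w=x\cdot z$ forces $x$ to be a left divisor of~$y$.

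For the ``if'' direction, suppose $E$ is the path category of a directed graph~$\Gamma$ whose edges carry degrees in~$\N_{\ge1}$, with $E_n$ the paths of total degree~$n$ and the multiplication given by concatenation. I would first check that this is a weak partial product system of the form built in Section~\ref{sec:graph}: associativity and $E_0=V$ are clear, and $\tilde\mu_{n,m}$ is injective because the partial sums of a word in positive integers strictly increase, so a path is recovered from its edge sequence and the position at which the partial degree first reaches a prescribed value. Then, writing $x$ and~$y$ as edge sequences, the relation $y\cdot w=x\cdot z$ forces these sequences to agree entry by entry as far as both run, so either the edge sequence of~$x$ is an initial segment of that of~$y$, giving $y=x\cdot u$, or that of~$y$ is an initial segment of that of~$x$, which is impossible because it would make the degree of~$x$ at least that of~$y$. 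Hence the combinatorial condition holds and, by the reformulation, we have a partial product system.

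For the ``only if'' direction, assume the combinatorial condition. Call $f\in E$ \emph{irreducible} if its degree is at least~$1$ and $f$ is not a product of two elements of degree at least~$1$, and let~$\Gamma$ be the directed graph with vertex set~$V$ and edges the irreducibles, each carrying its degree in~$E$. By induction on the degree, every element of positive degree is a product of irreducibles. The crux is \emph{uniqueness} of such a factorisation: if $p_1\cdots p_k=q_1\cdots q_l$ with all factors irreducible, I would compare the degrees of~$p_1$ and~$q_1$. If the degree of~$p_1$ were strictly smaller---the cases $k=1$ and $l=1$ being excluded immediately by the degree inequality together with irreducibility---then the combinatorial condition applied with $x=p_1$, $y=q_1$, $w=q_2\cdots q_l$, $z=p_2\cdots p_k$ would give $q_1=p_1\cdot u$ with~$u$ of positive degree, contradicting irreducibility of~$q_1$; by symmetry the degrees of~$p_1$ and~$q_1$ agree, whence injectivity of the relevant~$\tilde\mu$ yields $p_1=q_1$ and $p_2\cdots p_k=q_2\cdots q_l$, and an induction on total degree finishes the argument. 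Unique factorisation into irreducibles is exactly the assertion that the canonical functor from the path category of~$\Gamma$ to~$E$ is an isomorphism carrying the degree-$n$ paths onto~$E_n$; the grading is the standard one precisely when every irreducible has degree~$1$. I expect this uniqueness argument---and the attendant low-degree bookkeeping---to be the only real obstacle; everything else is routine verification.
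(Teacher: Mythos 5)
Your proposal is correct and follows essentially the same route as the paper: both translate the condition that the Fock representation be a representation into the combinatorial statement that an equation \(y\cdot w = x\cdot z\) in \(E\) only has the trivial solutions with \(y = x\cdot u\), and both then identify \(E\) with the path category of the graph of irreducible arrows via a unique-factorisation argument. The only cosmetic difference is that you cancel irreducible factors from the left end of a word while the paper cancels them from the right.
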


\begin{proof}
  We have already seen that the weak Fock representation exists, that
  is, the operators~\(S_n(\xi)\)
  for \(\xi\in\Hilm_n\)
  are adjointable.  When is it a representation?  This means that
  \(S_n(x)^*S_m(y)z\in S_{m-n}(\Hilm_{m-n})\Hilm[F]\)
  for \(m,n\in\N\)
  with \(m>n\)
  and all \(x\in E_n\),
  \(y \in E_m\),
  \(z\in E_k\).
  By definition,
  \(S_n(\delta_x)^*S_m(\delta_y)(\delta_z) =
  S_n(\delta_x)^*(\delta_{y\cdot z})\)
  is equal to
  \(\braket{\delta_x}{\delta_s} \delta_t = \delta_{x=s} \delta_t\)
  if \(y \cdot z = s \cdot t\)
  for some \(s \in E_n\)
  and \(t\in E_{m+k-n}\),
  and~\(0\)
  otherwise.  So \(S_n(\delta_x)^*S_m(\delta_y)(\delta_z) = \delta_t\)
  if \(y \cdot z = x \cdot t\)
  and~\(0\)
  otherwise.  We need this to belong to
  \(S_{m-n}(\Hilm_{m-n})\Hilm[F]\),
  that is, to be either~\(0\)
  or of the form~\(\delta_{a\cdot b}\)
  for some \(a\in E_{m-n}\),
  \(b\in E_k\).
  Thus the weak Fock representation is a representation if and only if
  \(y\cdot z = x\cdot t\)
  for \(x\in E_n\),
  \(y \in E_m\),
  \(z\in E_k\),
  \(t\in E_{m+k-n}\)
  implies that \(t=a\cdot b\)
  for some \(a\in E_{m-n}\),
  \(b\in E_k\).
  Since we already assumed that the multiplication map
  \(E_n\times_{s,r} E_k \to E_{n+k}\)
  is injective, the equation \(y\cdot z = (x\cdot a)\cdot b\)
  implies \(z=b\)
  and \(y=x\cdot a\).
  That is, the equation \(y\cdot z = x\cdot t\)
  has only the trivial solutions with \(y=x\cdot a\)
  and \(t=a\cdot z\) in~\(E\).

  We call an element \(x\in\bigsqcup_{n=1}^\infty E\)
  \emph{irreducible} if the only product decompositions \(x=a\cdot b\)
  with \(a,b\in E\)
  are \(x=1_{r(x)}\cdot x\)
  and \(x=x\cdot 1_{s(x)}\).
  The irreducible elements in~\(E\)
  with the restriction of the range and source maps form a directed
  graph~\(\Gamma\).
  Since~\(E\)
  is a category, any path in the graph~\(\Gamma\)
  defines an element in~\(E\).
  This defines a functor from the path category of~\(\Gamma\)
  to~\(E\).
  This functor is the identity on objects.  It is surjective on arrows
  because if an arrow is not irreducible, we may write it as a
  non-trivial product of two strictly shorter arrows, and decomposing
  these as long as possible will eventually write our arrow as a
  product of irreducible arrows.

  In a path category, the equation \(y\cdot z = x\cdot t\)
  only has the trivial solutions.  Conversely, we claim that the
  functor from the path category of~\(\Gamma\)
  to~\(E\)
  is injective if the equation \(y\cdot z = x\cdot t\)
  has only the trivial solutions and the multiplication maps
  \(E_n \times_{s,r} E_m \to E_{n+m}\)
  are injective for all \(n,m\in\N\).
  Indeed, assume that two paths \(a_1 \dotsm a_k\)
  and \(b_1 \dotsm b_l\)
  in the graph~\(\Gamma\)
  are mapped to the same arrow in~\(E\).
  Since \(y\cdot z = x\cdot t\)
  has only trivial solutions and \(a_k\)
  and~\(b_l\)
  are irreducible, we must have \(a_k = b_l\)
  and \(a_1 \dotsm a_{k-1}=b_1 \dotsm b_{l-1}\).
  By induction, we conclude that our two paths are equal.  So the
  functor from the path category of~\(\Gamma\)
  to~\(E\)
  is an isomorphism of categories if~\(E\)
  gives a partial product system.  The path category of~\(\Gamma\)
  differ from~\(E\)
  only through the grading, that is, the functor to~\(\N\).
  Whereas all generators in a usual path category have length~\(1\),
  they may belong to~\(E_n\) for any \(n\in\N\).
\end{proof}

Roughly speaking, Proposition~\ref{pro:graph_pps} says that the ``partial''
analogues of graph \(\Cst\)\nb-algebras
are not more general than graph \(\Cst\)\nb-algebras.
The only thing that is modified is the gauge action because the edges
of the graph~\(\Gamma\)
may belong to~\(\Hilm_n\)
and thus have degree~\(n\) for any \(n\in\N\).

\subsection{Analogy with Nica covariance}
\label{sec:Nica}

We briefly discuss the analogy between the extra condition for a weak
representation to be a representation and Nica covariance for
representations of product systems over quasi-lattice orders (see
\cite{Fowler:Product_systems}*{Definition~5.1}).  Let \((G,P)\)
be a quasi-lattice ordered group; for \(p,q\in P\),
let \(p\vee q\in P\cup\{\infty\}\)
be their least upper bound or~\(\infty\)
if \(p,q\)
have no upper bound in~\(P\).
Let \((A,\Hilm_p,\mu_{p,q})_{p,q\in P}\)
be a product system over~\(P\).
Let \((\omega_p)_{p\in P}\)
be a Toeplitz representation of the product system in a
\(\Cst\)\nb-algebra~\(B\).
The representation is called \emph{Nica covariant} if, for all
\(p,q\in P\),
\begin{equation}
  \label{eq:Nica_1}
  \omega_p(\Hilm_p) \cdot \omega_p(\Hilm_p)^* \cdot
  \omega_q(\Hilm_q) \cdot \omega_q(\Hilm_q)^*
  \subseteq
  \begin{cases}
    \omega_{p\vee q}(\Hilm_{p\vee q}) \cdot
    \omega_{p\vee q}(\Hilm_{p\vee q})^*&p\vee q\in P,\\
    0&p\vee q=\infty.
  \end{cases}
\end{equation}
Here we use
\(\Theta_p(\Comp(\Hilm_p)) = \omega_p(\Hilm_p)\omega_p(\Hilm_p)^*\)
by~\eqref{eq:def_Theta}.

\begin{lemma}
  \label{lem:Nica_covariant}
  A Toeplitz representation \((\omega_p)_{p\in P}\)
  of a product system over \((G,P)\)
  satisfies~\eqref{eq:Nica_1} if and only if
  \begin{equation}
    \label{eq:Nica_2}
    \omega_p(\Hilm_p)^* \omega_q(\Hilm_q)
    \subseteq
    \begin{cases}
      \omega_{p^{-1}(p\vee q)}(\Hilm_{p^{-1}(p\vee q)})
      \omega_{q^{-1}(p\vee q)}(\Hilm_{q^{-1}(p\vee q)})^*&
      p\vee q\in P,\\
      0&
      p\vee q=\infty.
    \end{cases}
  \end{equation}
\end{lemma}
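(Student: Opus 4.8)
The plan is to deduce the equivalence purely from the defining relations of a Toeplitz representation, using three elementary facts. First, for each \(p\in P\) the closed linear span \(\omega_p(\Hilm_p)\) satisfies
\[
\omega_p(\Hilm_p)\,\omega_p(\Hilm_p)^*\,\omega_p(\Hilm_p) = \omega_p(\Hilm_p),\qquad
\omega_p(\Hilm_p)^*\,\omega_p(\Hilm_p)\,\omega_p(\Hilm_p)^* = \omega_p(\Hilm_p)^*;
\]
this follows from \(\omega_p(x)\omega_p(y)^*\omega_p(z) = \omega_p(x\cdot\braket{y}{z})\) together with the standard identity \(\overline{\Hilm_p\cdot\braket{\Hilm_p}{\Hilm_p}} = \Hilm_p\) for a Hilbert module. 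Second, since we have a genuine product system, \(\mu_{p,s}\) is unitary for all \(p,s\in P\); as the image of the algebraic tensor product under \(\mu_{p,s}\) is dense in \(\Hilm_{ps}\) and \(\omega_{ps}\) is contractive, we get \(\omega_p(\Hilm_p)\,\omega_s(\Hilm_s) = \omega_{ps}(\Hilm_{ps})\) as closed linear spans. Third, combining these, if \(p\le r\) with \(r = ps\), \(s\in P\), then
\[
\omega_p(\Hilm_p)^*\,\omega_r(\Hilm_r) = \omega_e(\braket{\Hilm_p}{\Hilm_p})\cdot\omega_s(\Hilm_s)\subseteq\omega_s(\Hilm_s),
\]
and, taking adjoints, \(\omega_r(\Hilm_r)^*\,\omega_q(\Hilm_q)\subseteq\omega_t(\Hilm_t)^*\) whenever \(q\le r\) with \(r = qt\), \(t\in P\). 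Throughout, each \(\omega_p(\Hilm_p)\) is read as a closed linear span, and I use the routine associativity of closed-span products in a \(\Cst\)-algebra.

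For the implication \eqref{eq:Nica_1}\(\Rightarrow\)\eqref{eq:Nica_2}, the idea is to pad \(\omega_p(\Hilm_p)^*\,\omega_q(\Hilm_q)\) symmetrically so that the Nica relation becomes applicable. By the first fact,
\[
\omega_p(\Hilm_p)^*\,\omega_q(\Hilm_q)
= \omega_p(\Hilm_p)^*\cdot\bigl(\omega_p(\Hilm_p)\,\omega_p(\Hilm_p)^*\,\omega_q(\Hilm_q)\,\omega_q(\Hilm_q)^*\bigr)\cdot\omega_q(\Hilm_q).
\]
If \(p\vee q = \infty\), then \eqref{eq:Nica_1} forces the bracketed factor to vanish, and so does the left-hand side. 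If \(r\defeq p\vee q\in P\), then \eqref{eq:Nica_1} bounds the bracketed factor by \(\omega_r(\Hilm_r)\,\omega_r(\Hilm_r)^*\), so the left-hand side lies in \(\omega_p(\Hilm_p)^*\,\omega_r(\Hilm_r)\cdot\omega_r(\Hilm_r)^*\,\omega_q(\Hilm_q)\); since \(p\le r\) and \(q\le r\), the third fact bounds this by \(\omega_{p^{-1}r}(\Hilm_{p^{-1}r})\,\omega_{q^{-1}r}(\Hilm_{q^{-1}r})^*\), which is precisely the right-hand side of \eqref{eq:Nica_2}.

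For the converse \eqref{eq:Nica_2}\(\Rightarrow\)\eqref{eq:Nica_1}, I substitute \eqref{eq:Nica_2} into the middle of
\[
\omega_p(\Hilm_p)\,\omega_p(\Hilm_p)^*\,\omega_q(\Hilm_q)\,\omega_q(\Hilm_q)^*
= \omega_p(\Hilm_p)\cdot\bigl(\omega_p(\Hilm_p)^*\,\omega_q(\Hilm_q)\bigr)\cdot\omega_q(\Hilm_q)^*.
\]
If \(p\vee q = \infty\) this vanishes. If \(r\defeq p\vee q\in P\), it is contained in \(\omega_p(\Hilm_p)\,\omega_{p^{-1}r}(\Hilm_{p^{-1}r})\cdot\bigl(\omega_q(\Hilm_q)\,\omega_{q^{-1}r}(\Hilm_{q^{-1}r})\bigr)^*\), which by the second fact equals \(\omega_r(\Hilm_r)\,\omega_r(\Hilm_r)^*\), as required.

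I expect the only delicate point — the main obstacle, such as it is — to be the bookkeeping of closed linear spans: one has to check that the displayed regroupings are legitimate (associativity of closed-span products and continuity of the \(\omega_p\)), and that the density statements \(\overline{\Hilm_p\cdot\braket{\Hilm_p}{\Hilm_p}} = \Hilm_p\) and the surjectivity of the unitaries \(\mu_{p,s}\) are invoked correctly. No ingredient beyond the Toeplitz relations and unitarity of the multiplication maps is needed; in spirit this is the quasi-lattice analogue of Proposition~\ref{simplifiability}.
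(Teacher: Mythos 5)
Your proof is correct and follows essentially the same route as the paper: both directions come down to multiplying the given inclusion on the left by \(\omega_p(\Hilm_p)^{(*)}\) and on the right by \(\omega_q(\Hilm_q)^{(*)}\), then simplifying via \(\omega_p(\Hilm_p)^*\omega_p(\Hilm_p)\omega_p(\Hilm_p)^* = \omega_p(\Hilm_p)^*\), the identity \(\omega_p(\Hilm_p)\omega_{p^{-1}(p\vee q)}(\Hilm_{p^{-1}(p\vee q)}) = \omega_{p\vee q}(\Hilm_{p\vee q})\) from unitarity of the multiplication maps, and the resulting containment \(\omega_p(\Hilm_p)^*\omega_{p\vee q}(\Hilm_{p\vee q})\subseteq \omega_{p^{-1}(p\vee q)}(\Hilm_{p^{-1}(p\vee q)})\). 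Your ``padding'' formulation is just the paper's two-sided multiplication read in the other order, and your three preliminary facts are exactly the lemmas the paper invokes.
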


\begin{proof}
  Equation~\eqref{eq:Nica_2} multiplied on the left by
  \(\omega_p(\Hilm_p)\)
  and on the right by \(\omega_q(\Hilm_q)^*\)
  becomes~\eqref{eq:Nica_1} because
  \(\omega_p(\Hilm_p) \cdot \omega_{p^{-1}(p\vee
    q)}(\Hilm_{p^{-1}(p\vee q)}) = \omega_{p\vee q}(\Hilm_{p\vee q})\)
  as in the proof of Proposition~\ref{pro:simplifiable_global}.
  Conversely, multiply~\eqref{eq:Nica_1} on the left by
  \(\omega_p(\Hilm_p)^*\)
  and on the right by \(\omega_q(\Hilm_q)\).  We may simplify
  \begin{multline*}
    \omega_p(\Hilm_p)^* \cdot \omega_p(\Hilm_p) \cdot \omega_p(\Hilm_p)^*
    = \omega_0(\braket{\Hilm_p}{\Hilm_p}) \cdot \omega_p(\Hilm_p)^*
    \\= \omega_p(\Hilm_p \cdot \braket{\Hilm_p}{\Hilm_p})^*
    = \omega_p(\Hilm_p)^*
  \end{multline*}
  and similarly for~\(q\).  And
  \begin{multline*}
    \omega_p(\Hilm_p)^* \cdot \omega_{p\vee q}(\Hilm_{p\vee q})
    = \omega_p(\Hilm_p)^* \cdot \omega_p(\Hilm_p) \cdot
    \omega_{p^{-1}(p\vee q)}(\Hilm_{p^{-1}(p\vee q)})
    \\= \omega_0(\braket{\Hilm_p}{\Hilm_p}) \cdot \omega_{p^{-1}(p\vee q)}(\Hilm_{p^{-1}(p\vee q)})
    = \omega_{p^{-1}(p\vee q)}(\braket{\Hilm_p}{\Hilm_p} \cdot \Hilm_{p^{-1}(p\vee q)})
    \\\subseteq \omega_{p^{-1}(p\vee q)}(\Hilm_{p^{-1}(p\vee q)}).
  \end{multline*}
  In this way, \eqref{eq:Nica_1} implies~\eqref{eq:Nica_2}.
\end{proof}

Now let \((G,P)\)
be \((\Z,\N)\).
If \(p,q\in\N\),
then \(p^{-1}(p\vee q) = \max\{p,q\} - p\)
is \(0\)
if \(p\ge q\)
and \(q-p\)
if \(p\le q\).
So~\eqref{eq:Nica_2} becomes
\(\omega_p(\Hilm_p)^* \cdot \omega_q(\Hilm_q) \subseteq
\omega_{q-p}(\Hilm_{q-p})\)
if \(p\le q\)
and
\(\omega_p(\Hilm_p)^* \cdot \omega_q(\Hilm_q) \subseteq
\omega_{p-q}(\Hilm_{p-q})^*\)
if \(p\ge q\).
Proposition~\ref{simplifiability} shows that each of these conditions
is equivalent to the conditions in
Definition~\ref{def:weak_representation}.
Proposition~\ref{pro:simplifiable_global} shows that~\eqref{eq:Nica_2}
holds automatically for weak representations of a global product
system.  In other words, any Toeplitz representation of a product
system over~\(\N\)
is Nica covariant, which is
\cite{Fowler:Product_systems}*{Remark~5.2}.

\section{The Toeplitz algebra}
\label{sec:Toeplitz}

In this section, we define the Toeplitz \(\Cst\)\nb-algebra
as the universal \(\Cst\)\nb-algebra
for representations of a partial product system.  We equip the
Toeplitz \(\Cst\)\nb-algebra
with a gauge action of the circle group~\(\T\),
describe the spectral subspaces, and prove a gauge-equivariant
uniqueness theorem.  We describe the fixed-point subalgebra of the
gauge action as an inductive limit and then show that the Fock
representation generates a faithful representation of the Toeplitz
\(\Cst\)\nb-algebra.

\begin{definition}
  Let \(\Hilm = (A, (\Hilm_n)_n, \mu_{n,m})\)
  be a partial product system.  Its Toeplitz algebra is a
  \(\Cst\)\nb-algebra~\(\Toep\)
  with a representation \((\bar{\omega}_n)_{n\in\N}\)
  that is universal in the following sense: for any representation
  \((\omega_n)_{n\in\N}\)
  of~\(\Hilm\)
  in a \(\Cst\)\nb-algebra~\(B\),
  there is a unique
  \Star{}homomorphism \(\varrho \colon \Toep \to B\)
  with \(\varrho \circ \bar{\omega}_n = \omega_n\) for all \(n\in\N\).
\end{definition}

For a product system in the usual sense, any weak representation is a
representation by Proposition~\ref{pro:simplifiable_global}.  Hence
the universal property of the Toeplitz \(\Cst\)\nb-algebra
is the usual one in this case.  So our definition of the Toeplitz
\(\Cst\)\nb-algebra
generalises the usual definition for product systems.

\begin{proposition}
  \label{pro:Toeplitz_relations}
  Any partial product system \((A, \Hilm_n, \mu_{n,m})\)
  has a Toeplitz algebra.  It is the universal \(\Cst\)\nb-algebra
  generated by the symbols \(\bar{\omega}_n(x)\)
  for \(n\in \N\),
  \(x \in \Hilm_n\), subject to the following relations:
  \begin{enumerate}
  \item \label{Toeplitz_relation_1}%
    \(\bar{\omega}_0\) is a \Star{}homomorphism;
  \item \label{Toeplitz_relation_2}%
    the maps \(x\mapsto \bar{\omega}_n(x)\)
    are linear for all \(n\in\N_{\ge1}\);
  \item \label{Toeplitz_relation_3}%
    \(\bar{\omega}_n(x) \bar{\omega}_m(y) =
    \bar{\omega}_{n+m}(\mu_{n,m}(x\otimes y))\)
    for \(x \in \Hilm_n\), \(y \in \Hilm_m\), \(n,m\in \N\);
  \item \label{Toeplitz_relation_4}%
    \(\bar{\omega}_n(x)^* \bar{\omega}_n(y) =
    \bar{\omega}_0(\braket{x}{y}_A)\)
    for all \(x \in \Hilm_n\), \(y \in \Hilm_n\) \(n \in \N\);
  \item \label{Toeplitz_relation_5}%
    \(\omega_n(x)^*\omega_m(y)=\omega_{m-n}(S_n(x)^*(y))\) for
    all \(n,m\in\N\) with \(m>n\) and \(x\in\Hilm_n\), \(y\in\Hilm_m\).
  \end{enumerate}
\end{proposition}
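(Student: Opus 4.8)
The plan is to identify relations \ref{Toeplitz_relation_1}--\ref{Toeplitz_relation_5} with the conditions defining a representation of~\(\Hilm\), and then to build~\(\Toep\) by the standard construction of a universal \(\Cst\)\nb-algebra for a set of generators and relations that admit a uniform norm bound. First I would check that a family of linear maps \((\omega_n\colon \Hilm_n \to B)_{n\in\N}\) into a \(\Cst\)\nb-algebra~\(B\) satisfies \ref{Toeplitz_relation_1}--\ref{Toeplitz_relation_5} if and only if it is a representation of~\(\Hilm\) in the sense of Definition~\ref{def:weak_representation}. Indeed, relation~\ref{Toeplitz_relation_1} says that~\(\omega_0\) is a \Star{}homomorphism, relation~\ref{Toeplitz_relation_2} adds linearity of~\(\omega_n\) for \(n\ge1\), and relations~\ref{Toeplitz_relation_3} and~\ref{Toeplitz_relation_4} are conditions \ref{def:weak_representation_1} and~\ref{def:weak_representation_2} of Definition~\ref{def:weak_representation}; together these say exactly that \((\omega_n)\) is a weak representation. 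Since~\(\Hilm\) is a partial product system, its weak Fock representation exists, so Proposition~\ref{simplifiability} applies, and relation~\ref{Toeplitz_relation_5} is condition~\ref{simplifiability_1} there, which is one of the equivalent conditions characterising when a weak representation is a representation. Hence \ref{Toeplitz_relation_1}--\ref{Toeplitz_relation_5} hold precisely when \((\omega_n)\) is a representation of~\(\Hilm\).

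Next I would record the norm bound needed to make the universal construction legitimate: for \(x\in\Hilm_n\), relation~\ref{Toeplitz_relation_4} together with contractivity of~\(\omega_0\) (a \Star{}homomorphism, by~\ref{Toeplitz_relation_1}) gives \(\norm{\omega_n(x)}^2 = \norm{\omega_n(x)^*\omega_n(x)} = \norm{\omega_0(\braket{x}{x}_A)} \le \norm{\braket{x}{x}_A} = \norm{x}^2\), so \(\norm{\omega_n(x)}\le\norm{x}\) in every \(\Cst\)\nb-algebra satisfying the relations. Because of this bound, the symbols \(\bar\omega_n(x)\), \(n\in\N\), \(x\in\Hilm_n\), with the relations \ref{Toeplitz_relation_1}--\ref{Toeplitz_relation_5}, form an admissible presentation, and the standard construction produces a \(\Cst\)\nb-algebra~\(\Toep\): one forms the free \Star{}algebra on these symbols, imposes the relations, and completes in the \(\Cst\)\nb-seminorm \(a\mapsto\sup_\pi\norm{\pi(a)}\), where~\(\pi\) ranges over the \Star{}homomorphisms into \(\Cst\)\nb-algebras that send the symbols to solutions of the relations; the supremum is finite by the norm bound. (Alternatively, one cites the general existence theory for universal \(\Cst\)\nb-algebras of generators and relations.) The canonical family \((\bar\omega_n)\) in~\(\Toep\) then satisfies the relations, generates~\(\Toep\), and — by the first step — is a representation of~\(\Hilm\); and the defining factorisation property of~\(\Toep\) for families satisfying the relations is, again by the first step, precisely the universal property required of a Toeplitz algebra, with uniqueness of the factoring \Star{}homomorphism automatic since the \(\bar\omega_n(\Hilm_n)\) generate~\(\Toep\). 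Hence~\(\Toep\) is a Toeplitz algebra for~\(\Hilm\), with the stated presentation. That \(\Toep\neq0\) follows from the existence of the Fock representation but is not needed here.

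The only steps that carry genuine content are the first — where one must be a little careful about the degenerate cases \(n=0\) or \(m=0\) when matching relation~\ref{Toeplitz_relation_5} with conditions \ref{def:weak_representation_4} and~\ref{def:weak_representation_5} through Proposition~\ref{simplifiability} — and the usual set-theoretic fine print in the universal construction, which is handled as always by restricting attention to representations on Hilbert spaces of bounded cardinality, or by working directly with the free \Star{}algebra. I do not anticipate a real obstacle.
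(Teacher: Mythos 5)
Your proposal is correct and follows essentially the same route as the paper: the paper likewise forms the \Star{}algebra on these generators and relations, derives the bound \(\norm{\bar\omega_n(x)}\le\norm{x}_{\Hilm_n}\) from relation~\ref{Toeplitz_relation_4} and the fact that \(\bar\omega_0\) is a \Star{}homomorphism, completes in the maximal \(\Cst\)\nb-norm, and identifies the relations with the conditions for a representation via Proposition~\ref{simplifiability} (in particular using that relation~\ref{Toeplitz_relation_5} yields the adjoint condition for \(m<n\)). No gaps.
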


\begin{proof}
  Let~\(\mathfrak{T}\)
  be the \Star{}algebra with the generators and relations as above.
  Any \(\Cst\)\nb-norm on~\(\mathfrak{T}\) satisfies
  \[
  \norm{\bar{\omega}_n(x)}
  = \norm{\bar{\omega}_n(x)^*\bar{\omega}_n(x)}^{1/2}
  = \norm{\bar{\omega}_0(\braket{x}{x}_A)}^{1/2}
  \le \norm{\braket{x}{x}_A}^{1/2}
  = \norm{x}_{\Hilm_n},
  \]
  where the inequality uses that~\(\bar{\omega}_0\)
  is a \Star{}homomorphism.  Hence the set of all \(\Cst\)\nb-norms
  on~\(\mathfrak{T}\)
  has a maximum.  We claim that the completion of~\(\mathfrak{T}\)
  in this maximal \(\Cst\)\nb-norm
  is the Toeplitz algebra of the partial product system.
  The relations that define~\(\mathfrak{T}\)
  are exactly those that are needed to make~\(\bar{\omega}_n\)
  a representation of our partial product system in~\(\Toep\).
  Here we use Proposition~\ref{simplifiability}, which shows that
  \ref{Toeplitz_relation_5} implies
  \(\omega_n(x)^*\omega_m(y)=\omega_{n-m}(S_m(y)^*(x))^*\)
  for \(m<n\),
  \(x\in\Hilm_n\),
  \(y\in\Hilm_m\).
  Any representation~\((\omega_n)_{n\in\N}\)
  of the partial product system in a \(\Cst\)\nb-algebra~\(B\)
  induces a unique \Star{}homomorphism \(\Toep \to B\)
  mapping \(\bar{\omega}_n(x) \mapsto \omega_n(x)\)
  for all \(n\in\N\), \(x\in\Hilm_n\).
\end{proof}

Next we define a gauge action on~\(\Toep\).
The maps
\[
\alpha_n\colon \Hilm_n \to \Cont(\T, \Toep),\qquad
x\mapsto (t \mapsto t^n \bar{\omega}_n(x)),
\]
form a representation of the partial product system
because~\(\bar{\omega}_n\)
is one.  By the universal property of~\(\Toep\),
there is a unique \Star{}homomorphism
\(\alpha\colon \Toep \to \Cont(\T, \Toep)\)
with \(\alpha(\bar{\omega}_n(x)) = \alpha_n(x)\)
for all \(n\in\N\),
\(x\in \Hilm_n\).
Define \(\alpha_t\colon \Toep \to \Toep\)
for \(t\in \T\)
by \(\alpha_t\defeq \ev_t\circ \alpha\).
Since~\(\alpha\)
is a \Star{}homomorphism, each~\(\alpha_t\)
is a \Star{}homomorphism and \(t\mapsto \alpha_t(x)\)
is continuous for each \(x\in\Toep\).
Since
\(\alpha_t(\alpha_s(\bar{\omega}_n(x))) = (ts)^n \bar{\omega}_n(x) =
\alpha_{ts}(\bar{\omega}_n(x))\)
and
\(\alpha_1(\bar{\omega}_n(x)) = \bar{\omega}_n(x) =
\id_\Toep(\bar{\omega}_n(x))\)
for \(x\in \Hilm_n\)
and \(n\in\N\),
the uniqueness part of the universal property of~\(\Toep\)
implies \(\alpha_t \alpha_s=\alpha_{t s}\)
for all \(t,s\in\T\)
and \(\alpha_1 = \id_\Toep\).
Thus each~\(\alpha_t\)
is bijective and \(\T \ni t \mapsto \alpha_t\)
is a continuous action of~\(\T\) on~\(\Toep\) by \Star{}automorphisms.

A circle action on a \(\Cst\)\nb-algebra
gives a lot of useful extra structure
(see~\cite{Exel:Circle_actions}).  We now use this for the
gauge action on~\(\Toep\).  We define its spectral subspaces
\[
\Toep_n \defeq \setgiven{x\in \Toep}
{\alpha_t(x) = t^n x \text{ for all } t\in \T}
\]
for \(n\in\Z\).
These spectral subspaces form a so-called \(\Z\)\nb-grading,
that is, \(\Toep_n \Toep_m \subseteq \Toep_{n+m}\)
and \(\Toep_n^* = \Toep_{-n}\)
for all \(n,m\in\Z\),
and the closed linear span of the subspaces~\(\Toep_n\)
is dense in~\(\Toep\).  In particular,
\[
\Toep_0 \defeq \setgiven{x \in \Toep}{\alpha_t(x) = x \text{ for all } t\in \T}
\]
is a \(\Cst\)\nb-subalgebra,
the \emph{fixed-point subalgebra} of~\(\alpha\).

We define the \emph{spectral projections}
\(E_n\colon \Toep \to \Toep\) for \(n\in \Z\) by
\[
E_n(x) \defeq \int_\T t^{-n} \alpha_t(x) \,\diff t
\]
for \(x\in \Toep\),
where~\(\diff t\)
denotes the normalised Haar measure on the compact group~\(\T\).
Each~\(E_n\)
is norm contractive and is an idempotent operator with
image~\(\Toep_n\)
that vanishes on~\(\Toep_m\)
for \(m\neq n\).
In particular, \(E_0\colon \Toep \to \Toep_0\)
is a conditional expectation.  It is well known to be faithful, that
is, if \(x\in\Toep\)
satisfies \(x\ge0\) and \(E_0(x)=0\), then \(x=0\).
This implies the following gauge-invariant uniqueness theorem:

\begin{theorem}
  \label{inje}
  Let~\(B\)
  be a \(\Cst\)\nb-algebra
  with an action~\(\beta\)
  of~\(\T\).
  Let \(\varrho\colon \Toep \to B\)
  be a \(\T\)\nb-equivariant
  \Star{}homomorphism.
  If~\(\varrho\)
  is injective on \(\Toep_0\subseteq \Toep\),
  then it is injective on~\(\Toep\).
\end{theorem}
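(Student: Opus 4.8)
The plan is to run the standard gauge\nb-invariant uniqueness argument, using the faithful conditional expectation $E_0\colon \Toep\to\Toep_0$ supplied by the gauge action together with the $\T$\nb-equivariance of~$\varrho$. Since~$\beta$ is a continuous action of the compact group~$\T$ on~$B$, averaging over normalised Haar measure produces a conditional expectation $F_0\colon B\to B^\beta$ onto the fixed\nb-point subalgebra, $F_0(b)\defeq \int_\T \beta_t(b)\,\diff t$. The first step is to check that~$\varrho$ intertwines these expectations, that is, $\varrho\circ E_0 = F_0\circ\varrho$. This is immediate: $\varrho$ is a bounded linear map, so it commutes with the Bochner integrals defining~$E_0$ and~$F_0$, and $\varrho\circ\alpha_t = \beta_t\circ\varrho$ for all $t\in\T$ by equivariance; hence $\varrho(E_0(x)) = \varrho\bigl(\int_\T \alpha_t(x)\,\diff t\bigr) = \int_\T \beta_t(\varrho(x))\,\diff t = F_0(\varrho(x))$. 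Note also that $\varrho(\Toep_0)\subseteq B^\beta$, so this identity is consistent on both sides.

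Now suppose $x\in\Toep$ with $\varrho(x)=0$; we must show $x=0$. Replacing~$x$ by $x^*x$, we may assume $x\ge0$ (if $x^*x=0$ then $x=0$). From the intertwining identity, $\varrho(E_0(x)) = F_0(\varrho(x)) = 0$. Since $E_0(x)\in\Toep_0$ and~$\varrho$ is injective on~$\Toep_0$ by hypothesis, we conclude $E_0(x)=0$. Because $E_0$ is a \emph{faithful} conditional expectation and $x\ge0$, this forces $x=0$. Hence $\ker\varrho=0$, so~$\varrho$ is injective on all of~$\Toep$.

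There is no serious obstacle: the whole content of the theorem is that our definitions have been arranged so that this textbook argument applies verbatim. The only ingredient that is not a one\nb-line verification is the faithfulness of~$E_0$, which has already been recalled above as a well\nb-known property of conditional expectations attached to circle actions (see~\cite{Exel:Circle_actions}), and the mild point that a bounded linear map commutes with the relevant vector\nb-valued integrals, which is routine.
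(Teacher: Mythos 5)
Your proof is correct and follows the same route as the paper: pass to $x^*x$, use $\T$\nb-equivariance to intertwine the averaging projections, invoke injectivity on $\Toep_0$ and then the faithfulness of~$E_0$. The only cosmetic difference is that the paper phrases the expectation on~$B$ as the $0$th spectral projection $E_0^B$ rather than writing out the Haar integral, which is the same operator.
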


\begin{proof}
  Let \(\varrho(x)=0\)
  for some \(x\in \Toep\).
  Let~\(E^B_0\)
  denote the \(0\)th
  spectral projection on~\(B\).
  Since~\(\varrho\)
  is \(\T\)\nb-equivariant,
  it intertwines the spectral projections \(E_0\)
  on~\(\Toep\) and~\(E_0^B\) on~\(B\).  So
  \[
  \varrho(E_0(x^*x))
  = E^B_0(\varrho (x^*x))
  = E^B_0(\varrho (x)^*\varrho(x))
  = 0.
  \]
  This implies \(E_0(x^*x)=0\)
  because \(E_0(x^*x)\in \Toep_0\),
  and \(\varrho|_{\Toep_0}\)
  is injective.  Then \(x=0\) because~\(E_0\) is faithful.
\end{proof}

Let \(\bar\Theta_{m,n}\colon \Comp(\Hilm_m,\Hilm_n) \to \Toep\)
be the maps defined by the representation~\(\bar{\omega}_n\)
as in~\eqref{eq:def_Theta}, that is,
\(\bar\Theta_{m,n}(\ket{x} \bra{y}) = \bar\omega_n(x) \bar\omega_m(y)^*\)
for \(n,m\in\N\),
\(x\in\Hilm_n\),
\(y\in\Hilm_m\).
The relations in Proposition~\ref{pro:Toeplitz_relations} imply that
any word in the generators of~\(\mathfrak{T}\)
may be reduced to one of the form
\(\bar{\omega}_n(x) \bar{\omega}_m(y)^*\)
for some \(n,m \in \N\),
\(x\in \Hilm_n\),
\(y\in \Hilm_m\).
Therefore, the closed linear span of
\(\bar\Theta_{m,n}(\Comp(\Hilm_m,\Hilm_n))\)
for \(n,m\in\N\) is a dense subspace in~\(\Toep\).
It is a \Star{}subalgebra by
Proposition~\ref{simplifiability}.\ref{simplifiability_7}.

\begin{lemma}
  \label{lem:Toeplitz_spectral}
  The spectral subspace \(\Toep_k\subseteq\Toep\)
  for \(k\in\Z\)
  is the closed linear span of
  \(\bar\Theta_{n,n+k}(\Comp(\Hilm_n,\Hilm_{n+k}))\)
  for all \(n\in\N\) with \(n\ge -k\).
\end{lemma}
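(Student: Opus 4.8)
The plan is to prove the two inclusions separately, both exploiting the \(\Z\)\nb-grading of~\(\Toep\) defined by the spectral subspaces of the gauge action.

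First I would show that \(\bar\Theta_{n,n+k}(\Comp(\Hilm_n,\Hilm_{n+k})) \subseteq \Toep_k\) for every \(n\in\N\) with \(n\ge -k\).  Since \(\bar\Theta_{n,n+k}\) is linear and norm-contractive and \(\Toep_k\) is closed, it is enough to check this on rank-one operators \(\ket{x}\bra{y}\) with \(x\in\Hilm_{n+k}\), \(y\in\Hilm_n\).  For such operators \(\bar\Theta_{n,n+k}(\ket{x}\bra{y}) = \bar\omega_{n+k}(x)\,\bar\omega_n(y)^*\), and using \(\alpha_t(\bar\omega_m(z)) = t^m \bar\omega_m(z)\) we obtain
\[
\alpha_t\bigl(\bar\omega_{n+k}(x)\,\bar\omega_n(y)^*\bigr)
= t^{n+k}\,\bar\omega_{n+k}(x)\cdot t^{-n}\,\bar\omega_n(y)^*
= t^k\,\bar\omega_{n+k}(x)\,\bar\omega_n(y)^*
\]
for all \(t\in\T\), so this element lies in~\(\Toep_k\).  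Taking the closed linear span, first over \(x,y\) and then over all admissible~\(n\), proves one inclusion.  The same computation shows more generally that \(\bar\Theta_{m,n}(\Comp(\Hilm_m,\Hilm_n)) \subseteq \Toep_{n-m}\) for all \(m,n\in\N\).

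For the reverse inclusion, I would use the spectral projection~\(E_k\), which is norm-contractive, idempotent with image~\(\Toep_k\) (hence \(E_k(\Toep)=\Toep_k\)), and annihilates~\(\Toep_m\) for \(m\neq k\).  We noted before this lemma that the closed linear span of the subspaces \(\bar\Theta_{m,n}(\Comp(\Hilm_m,\Hilm_n))\) for \(m,n\in\N\) is dense in~\(\Toep\).  By the first part these subspaces are spectrally homogeneous of degree \(n-m\), so \(E_k\) fixes such a subspace when \(n-m=k\) and sends it to~\(\{0\}\) otherwise.  Since \(E_k\) is bounded, its image \(\Toep_k\) is therefore contained in the closed linear span of \(\bar\Theta_{m,n}(\Comp(\Hilm_m,\Hilm_n))\) over those \(m,n\in\N\) with \(n-m=k\).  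Such pairs are exactly \((n,n+k)\) with \(n\in\N\) and \(n+k\ge0\), i.e.\ \(n\ge -k\); so this closed linear span is the one in the statement, and combining it with the first inclusion finishes the proof.

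There is no serious obstacle here.  The only slightly delicate point is that the bounded map~\(E_k\) carries a closed linear span into the closed linear span of the images, which together with the spectral homogeneity of the building blocks \(\bar\Theta_{m,n}(\Comp(\Hilm_m,\Hilm_n))\) reduces the whole statement to the degree bookkeeping \(n-m=k\).
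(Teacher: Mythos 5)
Your proof is correct and follows essentially the same route as the paper: the same computation with the gauge action on \(\bar\omega_{n+k}(x)\bar\omega_n(y)^*\) gives the inclusion \(\bar\Theta_{n,n+k}(\Comp(\Hilm_n,\Hilm_{n+k}))\subseteq\Toep_k\), and the reverse inclusion is obtained by applying the contractive spectral projection \(E_k\) to the dense span of the subspaces \(\bar\Theta_{m,n}(\Comp(\Hilm_m,\Hilm_n))\). The paper phrases the second step as an explicit \(\varepsilon\)-approximation argument, but that is the same idea as your observation that \(E_k\) carries a closed linear span into the closed linear span of the images.
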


\begin{proof}
  If \(x\in\Hilm_{n+k}\),
  \(y\in\Hilm_n\),
  then
  \(\alpha_t(\bar{\omega}_{n+k}(x) \bar{\omega}_n(y)^*) = t^k
  \bar{\omega}_{n+k}(x) \bar{\omega}_n(y)^*\).
  Thus \(\bar{\omega}_{n+k}(x) \bar{\omega}_n(y)^* \in \Toep_k\).
  Now~\eqref{eq:def_Theta} implies that
  \(\bar\Theta_{n,n+k}(\Comp(\Hilm_n,\Hilm_{n+k}))\)
  is contained in~\(\Toep_k\).
  We have already observed that the linear span of
  \(\bar\Theta_{n,m}(\Comp(\Hilm_n,\Hilm_m))\)
  is dense in~\(\Toep\).
  If \(x\in\Toep_k\)
  and \(\varepsilon>0\),
  then there is a finite linear combination
  \(\sum_{n,m\in\N} \bar\Theta_{n,m}(x_{n,m})\)
  with \(x_{n,m}\in \Comp(\Hilm_n,\Hilm_m)\)
  that is \(\varepsilon\)\nb-close
  to~\(x\).
  Then
  \[
  E_k \Biggl(\sum_{n,m\in\N} \bar\Theta_{n,m}(x_{n,m})\Biggr)
  = \sum_{n,n+k\in\N} \bar\Theta_{n,n+k}(x_{n,n+k})
  \]
  is still \(\varepsilon\)\nb-close
  to~\(x\).
  So the closed linear span of
  \(\bar\Theta_{n,n+k}(\Comp(\Hilm_n,\Hilm_{n+k}))\)
  is dense in~\(\Toep_k\).
\end{proof}

\begin{theorem}
  \label{the:Toeplitz_gauge_fixed}
  The map
  \[
  \bigoplus_{j=0}^N \bar\Theta_j\colon \bigoplus_{j=0}^N \Comp(\Hilm_j)
  \to \Toep_0
  \]
  is injective and its image~\(\Toep_{0,N}\)
  is a \(\Cst\)\nb-subalgebra
  of~\(\Toep_0\).
  These \(\Cst\)\nb-subalgebras
  for \(N\to\infty\)
  form an inductive system with colimit~\(\Toep_0\).
\end{theorem}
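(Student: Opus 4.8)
The plan is to pull everything back to \(\Bound(\Hilm[F])\) through the representation \(\varrho\colon\Toep\to\Bound(\Hilm[F])\) that the universal property of~\(\Toep\) attaches to the Fock representation \((S_n)_{n\in\N}\), which is a representation of~\(\Hilm\) by Definition~\ref{def:partial_product_system}.  Thus \(\varrho\circ\bar\omega_n=S_n\) and \(\varrho(\bar\Theta_j(\ket{x}\bra{y}))=S_j(x)S_j(y)^*\) for \(x,y\in\Hilm_j\).  The key computation is to see how \(\varrho(\bar\Theta_j(T))\), for \(T\in\Comp(\Hilm_j)\), acts with respect to the block decomposition \(\Hilm[F]=\bigoplus_{k\in\N}\Hilm_k\) with block projections~\(P_k\).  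Since \(S_j(y)^*\) sends \(\Hilm_{j+m}\) into~\(\Hilm_m\) and annihilates \(\Hilm_m\) for \(m<j\), the operator \(S_j(x)S_j(y)^*\) vanishes on~\(\Hilm_k\) for \(k<j\) and maps~\(\Hilm_k\) into~\(\Hilm_k\) for \(k\ge j\).  On the diagonal block, \(S_j(y)^*|_{\Hilm_j}\) is the map \(\Hilm_j\to\Hilm_0=A\), \(z\mapsto\braket{y}{z}_A\), followed by \(S_j(x)\colon A\to\Hilm_j\), \(a\mapsto xa\); hence \(P_j\varrho(\bar\Theta_j(\ket{x}\bra{y}))P_j|_{\Hilm_j}=\ket{x}\bra{y}\), and by linearity and continuity \(P_j\varrho(\bar\Theta_j(T))P_j|_{\Hilm_j}=T\) for all \(T\in\Comp(\Hilm_j)\).

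Injectivity of \(\bigoplus_{j=0}^N\bar\Theta_j\) then follows by peeling off blocks.  Suppose \(\sum_{j=0}^N\bar\Theta_j(T_j)=0\) with \(T_j\in\Comp(\Hilm_j)\).  Applying~\(\varrho\) and restricting to~\(\Hilm_0\), only the \(j=0\) summand acts, so \(T_0=P_0\varrho(\bar\Theta_0(T_0))P_0|_{\Hilm_0}=0\).  Inductively, once \(T_0=\dots=T_{k-1}=0\) we have \(\sum_{j=k}^N\bar\Theta_j(T_j)=0\); restricting \(\varrho\) of this to~\(\Hilm_k\) kills the summands with \(j>k\) and leaves \(T_k=0\).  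Hence all \(T_j\) vanish; equivalently, the maps~\(\bar\Theta_j\) are injective with linearly independent images.

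Now consider the image \(\Toep_{0,N}=\sum_{j=0}^N\bar\Theta_j(\Comp(\Hilm_j))\).  It lies in~\(\Toep_0\) by the case \(k=0\) of Lemma~\ref{lem:Toeplitz_spectral} (equivalently, because \(\alpha_t\) fixes each \(\bar\omega_n(x)\bar\omega_n(y)^*\)); it is closed under adjoints by~\eqref{eq:Theta_multiplicative}; and \eqref{eq:multiply_Theta_nn} gives \(\bar\Theta_n(\Comp(\Hilm_n))\cdot\bar\Theta_p(\Comp(\Hilm_p))\subseteq\bar\Theta_{\max\{n,p\}}(\Comp(\Hilm_{\max\{n,p\}}))\subseteq\Toep_{0,N}\) whenever \(n,p\le N\), so \(\Toep_{0,N}\) is a \Star{}subalgebra.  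To see that it is closed I would use the first step: the bounded linear map
\[
  \bigoplus_{j=0}^N\Comp(\Hilm_j)\longrightarrow\bigoplus_{k=0}^N\Bound(\Hilm_k),\qquad
  (T_j)_j\longmapsto\Bigl(P_k\,\varrho\bigl(\textstyle\sum_{j=0}^N\bar\Theta_j(T_j)\bigr)P_k\big|_{\Hilm_k}\Bigr)_k,
\]
has \(k\)th component equal to \(T_k\) plus a bounded linear expression in \(T_0,\dots,T_{k-1}\); it is therefore of the form ``identity plus strictly lower triangular'' and hence bounded below.  Since this map factors through \(\bigoplus_{j=0}^N\bar\Theta_j\) followed by contractive maps, \(\bigoplus_{j=0}^N\bar\Theta_j\) is itself bounded below, so its range \(\Toep_{0,N}\) is closed.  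A closed \Star{}subalgebra of~\(\Toep_0\) is a \(\Cst\)\nb-subalgebra.

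Finally, \(\Toep_{0,N}\subseteq\Toep_{0,M}\) for \(N\le M\), so the \(\Cst\)\nb-subalgebras \(\Toep_{0,N}\) form an inductive system with the inclusions as connecting maps, and its colimit is the closure of \(\bigcup_N\Toep_{0,N}=\sum_{n=0}^\infty\bar\Theta_n(\Comp(\Hilm_n))\), which equals~\(\Toep_0\) by Lemma~\ref{lem:Toeplitz_spectral}.  The only real obstacle is the bounded-below estimate underlying the closedness of~\(\Toep_{0,N}\); injectivity, the \Star{}subalgebra property and the identification of the colimit are then routine bookkeeping with the relations from Proposition~\ref{simplifiability}.
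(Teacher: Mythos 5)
Your proposal is correct and follows essentially the same route as the paper: factor through the Fock representation, observe that \(\varrho(\bar\Theta_j(T))\) is block diagonal with \(j\)th diagonal entry \(T\) and vanishing entries below \(j\), peel off blocks recursively for injectivity and a bounded-below estimate (hence closedness of the image), and then invoke \eqref{eq:multiply_Theta_nn} and Lemma~\ref{lem:Toeplitz_spectral} for the \Star{}subalgebra and inductive-limit statements. Your explicit ``identity plus strictly lower triangular'' formulation of the bounded-below step is exactly what the paper's phrase ``the recursive proof shows that this injective map has a bounded inverse on its image'' amounts to.
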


\begin{proof}
  The Fock representation~\((S_n)_{n\in\N}\)
  induces a \Star{}homomorphism
  \(\varphi\colon \Toep\to\Bound(\Hilm[F])\)
  with \(\varphi(\bar\omega_n(x)) = S_n(x)\)
  for all \(n\in\N\),
  \(x\in \Hilm_n\).
  Define the \Star{}homomorphisms
  \(\Theta_n\colon \Comp(\Hilm_n) \to \Bound(\Hilm[F])\)
  as in~\eqref{eq:def_Theta}.  Then
  \(\varphi(\bar\Theta_n(x)) = \Theta_n(x)\)
  for all \(n\in\N\),
  \(x\in \Comp(\Hilm_n)\).
  The operator~\(\Theta_n(x)\)
  is block diagonal on
  \(\Hilm[F]\defeq \bigoplus_{j=0}^\infty \Hilm_j\).
  Its first~\(n-1\)
  diagonal entries are~\(0\),
  and the \(n\)th
  diagonal entry is~\(x\)
  acting on~\(\Hilm_n\).
  Let \(x_j\in\Comp(\Hilm_j)\)
  for \(j=0,\dotsc,N\)
  satisfy \(\sum_{j=0}^N \Theta_j(x_j)=0\).
  We prove recursively that \(x_0=0\),
  \(x_1=0\),
  \(x_2=0\),
  and so on, by looking at the \(j\)th
  diagonal entry for \(j=0,\dotsc,N\).  So the composite map
  \[
  \bigoplus_{j=0}^N \Comp(\Hilm_j)
  \prto \Toep_{0,N}
  \subseteq \Toep
  \xrightarrow{\varphi} \Bound(\Hilm[F])
  \]
  is injective.  Even more, the recursive proof above shows that this
  injective map has a bounded inverse on its image.  So its image is
  closed in~\(\Toep_0\).
  Equation~\eqref{eq:multiply_Theta_nn} and
  Lemma~\ref{lem:Toeplitz_spectral} imply that~\(\Toep_{0,N}\)
  is a \Star{}subalgebra in~\(\Toep_0\).
  Since it is closed as well, it is a \(\Cst\)\nb-subalgebra.
  Lemma~\ref{lem:Toeplitz_spectral} shows that the union
  \(\bigcup_{N=0}^\infty \Toep_{0,N}\)
  is dense in~\(\Toep_0\).
  Hence~\(\Toep_0\)
  is the inductive limit \(\Cst\)\nb-algebra
  of the inductive system \((\Toep_{0,N})_{N\in\N}\).
\end{proof}

\begin{theorem}
  \label{the:Toeplitz_Fock}
  The Fock representation~\((S_n)_{n\in\N}\)
  on the Fock module~\(\Hilm[F]\)
  over~\(A\)
  induces a faithful representation of~\(\Toep\).
  So~\(\Toep\)
  is isomorphic to the \(\Cst\)\nb-subalgebra
  of \(\Bound(\Hilm[F])\)
  generated by \(S_n(\Hilm_n)\) for all \(n\in\N\).
\end{theorem}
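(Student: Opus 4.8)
The plan is to deduce the statement from the gauge\nobreakdash-equivariant uniqueness theorem, Theorem~\ref{inje}, together with the description of the fixed\nobreakdash-point subalgebra in Theorem~\ref{the:Toeplitz_gauge_fixed}.  Since~\(\Hilm\) is a partial product system, its Fock representation \((S_n)_{n\in\N}\) is, by definition, a representation of~\(\Hilm\) in \(\Bound(\Hilm[F])\).  Hence the universal property of~\(\Toep\) provides a \Star{}homomorphism \(\varphi\colon \Toep \to \Bound(\Hilm[F])\) with \(\varphi(\bar\omega_n(x)) = S_n(x)\) for all \(n\in\N\), \(x\in\Hilm_n\); its image~\(\mathcal{C}\) is the \(\Cst\)\nb-subalgebra of \(\Bound(\Hilm[F])\) generated by the subspaces \(S_n(\Hilm_n)\), and it is closed because the image of a \Star{}homomorphism is a \(\Cst\)\nb-subalgebra.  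It remains to prove that~\(\varphi\) is injective.

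First I would equip \(\Bound(\Hilm[F])\) with a conjugation action of~\(\T\).  For \(t\in\T\), let \(U_t\in\Bound(\Hilm[F])\) be the unitary that acts on the summand~\(\Hilm_n\) of \(\Hilm[F]=\bigoplus_{n\in\N}\Hilm_n\) by the scalar~\(t^n\), and set \(\beta_t\defeq \mathrm{Ad}(U_t)\).  A direct computation with the creation operator \(S_n(x)\colon \Hilm_m\to\Hilm_{n+m}\) gives \(\beta_t(S_n(x)) = t^n S_n(x)\), which is exactly how the gauge action~\(\alpha\) acts on \(\bar\omega_n(x)\).  Hence \(\varphi\circ\alpha_t = \beta_t\circ\varphi\) on the generators of~\(\Toep\), so~\(\varphi\) is \(\T\)\nb-equivariant.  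Although~\(\beta\) need not be norm\nobreakdash-continuous on all of \(\Bound(\Hilm[F])\), its restriction to the image \(\mathcal{C}=\varphi(\Toep)\) is continuous, since for \(T=\varphi(S)\) we have \(\beta_t(T) = \varphi(\alpha_t(S))\) and \(t\mapsto\alpha_t(S)\) is continuous.  Thus \(\varphi\colon \Toep\to\mathcal{C}\) is an equivariant \Star{}homomorphism onto a \(\Cst\)\nb-algebra carrying a continuous \(\T\)\nb-action, and Theorem~\ref{inje} reduces the injectivity of~\(\varphi\) to injectivity on the fixed\nobreakdash-point subalgebra \(\Toep_0\).

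To prove that~\(\varphi\) is injective on~\(\Toep_0\), I would invoke Theorem~\ref{the:Toeplitz_gauge_fixed}: the subalgebra~\(\Toep_0\) is the inductive limit of the \(\Cst\)\nb-subalgebras \(\Toep_{0,N}\), each an isomorphic copy of \(\bigoplus_{j=0}^N \Comp(\Hilm_j)\), and the proof of that theorem shows that the composite \(\bigoplus_{j=0}^N \Comp(\Hilm_j)\to\Toep_{0,N}\subseteq\Toep\xrightarrow{\varphi}\Bound(\Hilm[F])\) is injective, the first map being an isomorphism onto~\(\Toep_{0,N}\).  Hence~\(\varphi\) is injective, and therefore isometric, on each~\(\Toep_{0,N}\), so it is isometric on their union, which is dense in~\(\Toep_0\); by continuity~\(\varphi\) is then isometric, in particular injective, on~\(\Toep_0\).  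Theorem~\ref{inje} now yields that~\(\varphi\) is injective on all of~\(\Toep\), so~\(\varphi\) is an isomorphism onto~\(\mathcal{C}\).

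The substantive work — the recursive block\nobreakdash-matrix argument identifying the~\(\Toep_{0,N}\) inside \(\Bound(\Hilm[F])\) — has already been carried out in Theorem~\ref{the:Toeplitz_gauge_fixed}; for the present statement the only point requiring a genuine check is that the conjugation action~\(\beta\) intertwines the gauge action under~\(\varphi\) and restricts to a continuous action on the image, after which Theorems~\ref{inje} and~\ref{the:Toeplitz_gauge_fixed} combine mechanically.  So I do not expect a serious obstacle here.
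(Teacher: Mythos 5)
Your proposal is correct and follows essentially the same route as the paper's own proof: both establish injectivity on each \(\Toep_{0,N}\) via the block-diagonal argument from Theorem~\ref{the:Toeplitz_gauge_fixed}, pass to the inductive limit to get injectivity on~\(\Toep_0\), and conclude with the gauge-equivariant uniqueness theorem, Theorem~\ref{inje}, using the grading of the Fock module to make~\(\varphi\) equivariant. Your extra care about the continuity of the conjugation action on the image is a reasonable elaboration of a point the paper leaves implicit.
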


\begin{proof}
  Let \(\varphi\colon \Toep\to\Bound(\Hilm[F])\)
  be the representation induced by the Fock representation; this
  exists because~\(\Hilm\) is a partial product system.
  The proof of Theorem~\ref{the:Toeplitz_gauge_fixed} shows that the
  restriction of~\(\varphi\)
  to \(\Toep_{0,N}\subseteq \Toep\)
  is injective for all \(N\in\N\).
  Since~\(\Toep_0\)
  is the inductive limit of these \(\Cst\)\nb-subalgebras by
  Theorem~\ref{the:Toeplitz_gauge_fixed}, it follows
  that~\(\varphi|_{\Toep_0}\)
  is injective.
  The Fock Hilbert module carries an obvious
  gauge action with spectral subspaces \(\Hilm[F]_n = \Hilm_n\).
  Let \(\beta\colon \T\to\Bound(\Hilm[F])\)
  be the induced action.  The Fock representation~\(\varphi\)
  is \(\T\)\nb-equivariant
  because \(S_n(x)\)
  belongs to the \(n\)th
  spectral subspace of~\(\Bound(\Hilm[F])\)
  for all \(n\in\N\),
  \(x\in\Hilm_n\).
  Hence Theorem~\ref{inje} shows that~\(\varphi\)
  is injective.  Its image is the \(\Cst\)\nb-subalgebra
  generated by the operators~\(S_n(x)\)
  for \(n\in\N\),
  \(x\in\Hilm_n\)
  because the elements \(\bar\omega_n(x)\)
  for \(n\in\N\),
  \(x\in\Hilm_n\)
  generate~\(\Toep\).
\end{proof}

\begin{proposition}
  \label{pro:sum_compacts_injects}
  The map \(\bigoplus_{m,n\in\N} \bar\Theta_{m,n}\colon \bigoplus_{m,n\in\N}
  \Comp(\Hilm_m,\Hilm_n) \to \Toep\)
  is injective.
\end{proposition}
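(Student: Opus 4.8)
The plan is to reduce the statement to a block-matrix computation in the Fock representation. Recall from the proof of Theorem~\ref{the:Toeplitz_gauge_fixed} that the Fock representation~\((S_n)_{n\in\N}\) induces a \Star{}homomorphism \(\varphi\colon\Toep\to\Bound(\Hilm[F])\) with \(\varphi(\bar\omega_n(x)) = S_n(x)\) for all \(n\in\N\), \(x\in\Hilm_n\), and hence \(\varphi\circ\bar\Theta_{m,n} = \Theta_{m,n}\), where \(\Theta_{m,n}\colon\Comp(\Hilm_m,\Hilm_n)\to\Bound(\Hilm[F])\) denotes the map attached to the Fock representation by~\eqref{eq:def_Theta}. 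Applying~\(\varphi\), it therefore suffices to prove that \(\bigoplus_{m,n\in\N}\Theta_{m,n}\) is injective.

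The key step is to describe the block structure of the operator \(\Theta_{m,n}(T)\) on the Fock module \(\Hilm[F] = \bigoplus_{j\in\N}\Hilm_j\). For a rank-one operator \(\ket x\bra y\) with \(x\in\Hilm_n\), \(y\in\Hilm_m\), we have \(\Theta_{m,n}(\ket x\bra y) = S_n(x)S_m(y)^*\). Since \(S_m(y)\) maps the summand~\(\Hilm_j\) into~\(\Hilm_{m+j}\), the annihilation operator \(S_m(y)^*\) vanishes on~\(\Hilm_i\) for \(i<m\) and maps \(\Hilm_{m+j}\) into~\(\Hilm_j\); composing with \(S_n(x)\colon\Hilm_j\to\Hilm_{n+j}\) shows that \(S_n(x)S_m(y)^*\) vanishes on~\(\Hilm_i\) for \(i<m\) and maps \(\Hilm_{m+j}\) into~\(\Hilm_{n+j}\) for all \(j\in\N\). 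A short computation using \(S_m(y)^*(z) = \braket{y}{z}\in A\) for \(z\in\Hilm_m\) and \(S_n(x)(a) = x\cdot a\) for \(a\in A = \Hilm_0\) moreover shows that the corner of \(S_n(x)S_m(y)^*\) from~\(\Hilm_m\) to~\(\Hilm_n\) is \(z\mapsto x\braket{y}{z} = (\ket x\bra y)(z)\). Since \(\Theta_{m,n}\) and the compressions \(U\mapsto P_n U P_m\) are norm-contractive, and the rank-one operators span a dense subspace of \(\Comp(\Hilm_m,\Hilm_n)\), it follows for every \(T\in\Comp(\Hilm_m,\Hilm_n)\) that \(\Theta_{m,n}(T)\) vanishes on~\(\Hilm_i\) for \(i<m\), maps \(\Hilm_{m+j}\) into~\(\Hilm_{n+j}\) for all \(j\in\N\), and satisfies \(P_n\,\Theta_{m,n}(T)\,P_m = T\), where \(P_j\in\Bound(\Hilm[F])\) is the orthogonal projection onto~\(\Hilm_j\).

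Finally I would run a minimality argument. Suppose \(\sum_{(m,n)}\Theta_{m,n}(T_{m,n}) = 0\) with only finitely many \(T_{m,n}\in\Comp(\Hilm_m,\Hilm_n)\) nonzero, and assume for contradiction that not all of them vanish. Let \(m_0\) be minimal such that \(T_{m_0,n_0}\neq0\) for some~\(n_0\), and compress the relation by \(U\mapsto P_{n_0}UP_{m_0}\). By the block structure above, a term \(\Theta_{m,n}(T_{m,n})\) has a nonzero block from~\(\Hilm_{m_0}\) to~\(\Hilm_{n_0}\) only if \(m\le m_0\) and \(n_0-n = m_0-m\); by minimality of~\(m_0\) the only such term with \(T_{m,n}\neq 0\) is \((m,n) = (m_0,n_0)\), and its compression is \(P_{n_0}\Theta_{m_0,n_0}(T_{m_0,n_0})P_{m_0} = T_{m_0,n_0}\). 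Hence \(T_{m_0,n_0} = 0\), a contradiction. So all \(T_{m,n}\) vanish, and \(\bigoplus_{m,n\in\N}\bar\Theta_{m,n}\) is injective.

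The only delicate point is the identification, in the second step, of the \(\Hilm_m\)-to-\(\Hilm_n\) corner of \(\Theta_{m,n}(T)\) with~\(T\) itself; granting this, the rest is the same bookkeeping as in the proof of Theorem~\ref{the:Toeplitz_gauge_fixed}, now carried out in all spectral degrees at once rather than only in degree~\(0\). An equivalent route would be to first split the sum according to the \(\Z\)\nb-grading of~\(\Toep\), using the spectral projections~\(E_k\) (linear, contractive, idempotent, with image~\(\Toep_k\) and vanishing on~\(\Toep_{k'}\) for \(k'\neq k\)), which reduces everything to the case \(n-m=k\) fixed; there the argument is exactly that of Theorem~\ref{the:Toeplitz_gauge_fixed} with the obvious modifications.
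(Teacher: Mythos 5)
Your proof is correct and follows essentially the same route as the paper: pass to the Fock representation, observe that the \((j,k)\)-block of \(\Theta_{m,n}(T)\) on \(\Hilm[F]\) vanishes unless \(k\ge m\) and \(j-k=n-m\) while the \((n,m)\)-block is \(T\) itself, and then extract each \(T_{m,n}\) from a vanishing finite sum. The only immaterial difference is organisational: the paper runs an induction over the target index \(j\), whereas you take a minimal source index \(m_0\) and compress by \(P_{n_0}\,\blank\,P_{m_0}\).
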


\begin{proof}
  It suffices to prove that
  \(\bigoplus_{m,n} \Theta_{m,n}\colon \bigoplus_{m,n}
  \Comp(\Hilm_m,\Hilm_n) \to \Toep\injto \Bound(\Hilm[F])\)
  is injective.  We describe operators on~\(\Hilm[F]\)
  by block matrices.  If
  \(x\in\Comp(\Hilm_m,\Hilm_n)\),
  then the \(j,k\)-entry
  of \(\Theta_{m,n}(x)\)
  vanishes unless \(j-k=n-m\)
  and \(k\ge m\),
  and the \(n,m\)-entry
  is \(x\colon \Hilm_m \to \Hilm_n\).
  Let \(\sum_{m,n\in\N}\Theta_{m,n}(x_{m,n}) = 0\)
  for some \(x_{m,n}\in\Comp(\Hilm_m,\Hilm_n)\)
  with only finitely many non-zero~\(x_{n,m}\).
  Now we examine the \(j,k\)-entries of
  \(\sum_{m,n\in\N}\Theta_{m,n}(x_{m,n})\)
  for increasing~\(j\).
  For \(j=0\),
  we see that \(x_{m,0}=0\)
  for all \(m\in\N\).
  An induction over~\(j\)
  shows that
  \(x_{m,j}=0\)
  for all \(m\in\N\) and all \(j\in\N\).
\end{proof}

\subsection{Hereditary restrictions and quotients}
\label{sec:restrict_quotient}

Throughout this subsection, let~\(A\)
be a \(\Cst\)\nb-algebra
and let \(\Hilm\defeq (A,\Hilm_n,\mu_{n,m})_{n,m\in\N}\)
be a partial product system over~\(A\).
Let~\(\Toep\)
be its Toeplitz \(\Cst\)\nb-algebra.
We are going to restrict the partial product system to a hereditary
\(\Cst\)\nb-subalgebra
\(H\subseteq A\)
and a quotient~\(A/I\)
for an \emph{invariant} ideal~\(I\)
in~\(A\).
We show that the Toeplitz \(\Cst\)\nb-algebra
for the restriction to~\(A/I\) is a quotient of~\(\Toep\).

First let \(H\subseteq A\)
be a hereditary \(\Cst\)\nb-subalgebra.  Define
\[
\Hilm_n|_H \defeq H\cdot \Hilm_n\cdot H \subseteq \Hilm_n;
\]
the set of all products \(a\cdot x\cdot b\)
with \(a,b\in H\),
\(x\in\Hilm_n\)
is already a closed linear subspace of~\(\Hilm_n\)
by the Cohen--Hewitt Factorisation Theorem.  The multiplication maps
in the given partial product system~\((\Hilm_n)_{n\in\N}\)
restrict to multiplication maps
\[
\mu_{n,m}|_H\colon
\Hilm_n|_H \otimes_H \Hilm_m|_H \injto \Hilm_{n+m}|_H.
\]

\begin{lemma}
  \label{lem:restrict_hereditary}
  The Hilbert \(H,H\)\nb-modules
  \(\Hilm_n|_H\)
  and the multiplication maps~\(\mu_{n,m}|_H\)
  form a partial product system~\(\Hilm|_H\) over~\(H\).
\end{lemma}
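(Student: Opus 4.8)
The plan is to deduce each defining property of $\Hilm|_H$ from the corresponding property of~$\Hilm$; only the existence of the weak Fock representation of~$\Hilm|_H$ and the covariance identity~\eqref{eq:covariance_for_Fock} for it require real work. First the weak-partial-product-system structure. Each $\Hilm_n|_H = H\cdot\Hilm_n\cdot H$ is a closed subspace of~$\Hilm_n$; its $A$-valued inner products lie in~$H$ because $H$ is hereditary, and its left $H$-action is the restriction of that on~$\Hilm_n$, so it is an $H,H$-correspondence. By Cohen--Hewitt, every $x\in\Hilm_n|_H$ factors as $x = h_1\cdot x_0\cdot h_2$ with $h_1,h_2\in H$, $x_0\in\Hilm_n$, and also as $x_0'\cdot h$ with $h\in H$. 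Using~\eqref{IP} one checks on simple tensors that the $H$-valued inner product of $\Hilm_n|_H\otimes_H\Hilm_m|_H$ coincides with the $A$-valued inner product of $\Hilm_n\otimes_A\Hilm_m$, so the canonical map $\Hilm_n|_H\otimes_H\Hilm_m|_H\to\Hilm_n\otimes_A\Hilm_m$ is isometric; composing it with~$\mu_{n,m}$ and using the bimodule identity $\mu_{n,m}(h\cdot\xi\otimes\eta\cdot h') = h\cdot\mu_{n,m}(\xi\otimes\eta)\cdot h'$ together with the factorisations shows that $\mu_{n,m}$ restricts to an isometric bimodule map $\mu_{n,m}|_H$ into $\Hilm_{n+m}|_H$. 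Associativity~\eqref{assoeqn} for $\Hilm|_H$ follows from that for~$\Hilm$ because all the maps are restrictions and the inclusions $\Hilm_k|_H\injto\Hilm_k$ are injective, and $\Hilm_0|_H = H\cdot A\cdot H = H = \Hilm[H]$ with $\mu_{0,m}|_H$, $\mu_{m,0}|_H$ the canonical isomorphisms of Lemma~\ref{ass} since $H\cdot\Hilm_m|_H = \Hilm_m|_H = \Hilm_m|_H\cdot H$. Thus $\Hilm|_H$ is a weak partial product system.

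The heart of the proof is the claim that, for $n\in\N$ and $x\in\Hilm_n|_H$, the creation operator $S_n(x)\in\Bound(\Hilm[F])$ of~$\Hilm$ and its adjoint $S_n(x)^*$ both map the submodule $\Hilm[F]|_H\defeq\bigoplus_m\Hilm_m|_H$ into itself, where $\Hilm[F]|_H$ carries the restriction of the $A$-valued inner product of~$\Hilm[F]$ as its $H$-valued inner product. That $S_n(x)$ preserves $\Hilm[F]|_H$ is immediate from the bimodule identity for~$\mu_{n,m}$ and the factorisations above. For the adjoint I would factor $x = h_1\cdot x_0\cdot h_2$ and note that on the summand~$\Hilm_m$ one has $S_n(x)\xi = h_1\cdot\bigl(S_n(x_0)(h_2\cdot\xi)\bigr)$; since left multiplication by an element~$h$ of~$H$ is adjointable on each~$\Hilm_k$ with adjoint left multiplication by~$h^*$, this gives $S_n(x)^*\eta = h_2^*\cdot\bigl(S_n(x_0)^*(h_1^*\cdot\eta)\bigr)$ for $\eta\in\Hilm_{m+n}$. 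Applying this to $\eta = \eta_0\cdot h_3\in\Hilm_{m+n}|_H$ and using that the adjointable operator $S_n(x_0)^*$ is right $A$-linear yields $S_n(x)^*\eta = h_2^*\cdot S_n(x_0)^*(h_1^*\cdot\eta_0)\cdot h_3\in H\cdot\Hilm_m\cdot H = \Hilm_m|_H$. Hence the restriction of $S_n(x)$ to $\Hilm[F]|_H$ is an adjointable operator on the Hilbert $H$-module~$\Hilm[F]|_H$, with adjoint the restriction of $S_n(x)^*$; and since this restriction agrees on every summand with the creation operator $S_n^{\Hilm|_H}(x)$ of~$\Hilm|_H$, the weak Fock representation of~$\Hilm|_H$ exists.

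It remains to see that this weak Fock representation is a representation. Since~$\Hilm$ is a partial product system, \eqref{eq:covariance_for_Fock} gives $S_n(x)^* S_m(y) = S_{m-n}(S_n(x)^* y)$ in $\Bound(\Hilm[F])$ whenever $m>n$, and for $x\in\Hilm_n|_H$, $y\in\Hilm_m|_H$ the claim above shows $S_n(x)^* y\in\Hilm_{m-n}|_H$. Restricting this identity to~$\Hilm[F]|_H$ and identifying the restrictions of $S_k(\cdot)$ and $S_k(\cdot)^*$ with the creation and annihilation operators of~$\Hilm|_H$ gives exactly~\eqref{eq:covariance_for_Fock} for~$\Hilm|_H$, so by Proposition~\ref{simplifiability}, condition~\ref{simplifiability_1}, the weak Fock representation of~$\Hilm|_H$ is a representation and $\Hilm|_H$ is a partial product system. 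I expect the one delicate point to be the second half of the central claim: a priori $S_n(x)^*$ sends $\Hilm_{m+n}|_H$ only into~$\Hilm_m$, and one must push a left $H$-factor through the annihilation operator by rewriting it via $x = h_1\cdot x_0\cdot h_2$ and a right $H$-factor through by right $A$-linearity of~$S_n(x_0)^*$; the remaining steps are routine manipulations with the bimodule maps and the Cohen--Hewitt factorisation.
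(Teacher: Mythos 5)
Your proof is correct and follows the same route as the paper: both restrict the Fock representation of~$\Hilm$ to the invariant Hilbert submodule $H\cdot\Hilm[F]\cdot H=\bigoplus_n \Hilm_n|_H$, observing that this submodule is preserved by $S_n(x)$ and $S_n(x)^*$ for $x\in H\cdot\Hilm_n\cdot H$, and identify the restriction with the Fock representation of~$\Hilm|_H$ so that the representation conditions are inherited. You simply supply more detail than the paper does at the one point it leaves implicit, namely pushing the left $H$\nb-factor through the annihilation operator via $x=h_1 x_0 h_2$ and Cohen--Hewitt.
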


\begin{proof}
  Let~\(\Hilm[F]\)
  be \(\bigoplus_{n=0}^\infty \Hilm_n\)
  with the Fock representation~\((S_n)_{n\in\N}\)
  of~\(\Hilm\).
  The Hilbert submodule
  \(\Hilm[F]\cdot H =\bigoplus_{n=0}^\infty \Hilm_n\cdot H\)
  is invariant for \(S_n(x)\)
  and \(S_n(x)^*\)
  for all \(x\in\Hilm_n\).
  The Hilbert submodule
  \(H\cdot \Hilm[F]\cdot H =\bigoplus_{n=0}^\infty H\cdot \Hilm_n\cdot H\)
  is still invariant for \(S_n(x)\)
  and \(S_n(x)^*\)
  for all \(x\in H\cdot \Hilm_n\cdot H\).
  We claim that \(S'_n(x) = S_n(x)|_{H\cdot \Hilm[F]\cdot H}\)
  for \(n\in\N\),
  \(x\in H\cdot \Hilm_n\cdot H\)
  defines a representation~\((S'_n)_{n\in\N}\)
  of~\((\Hilm_n|_H,\mu_{n,m}|_H)_{n,m\in\N}\)
  on~\(H\cdot \Hilm[F]\cdot H\).
  The operators~\(S'_n(x)\)
  for \(n\in\N\),
  \(x\in\Hilm_n\)
  are adjointable with adjoint
  \(S'_n(x)^* = S_n(x)^*|_{H\cdot \Hilm[F]\cdot H}\).
  The conditions in Definition~\ref{def:weak_representation} are
  inherited from the corresponding ones for the
  representation~\((S_n)_{n\in\N}\)
  of~\(\Hilm_n\).
  The representation~\(S'_n\)
  is unitarily equivalent to the Fock representation of
  \((\Hilm_n|_H,\mu_{n,m}|_H)_{n,m\in\N}\).
\end{proof}

Let \(I\idealin A\) be an invariant ideal.  Then
\[
\Hilm_n|_{A/I} \defeq \Hilm_n\bigm/\Hilm_n\cdot I
\]
is a Hilbert \(A/I,A/I\)-module
in a canonical way.  The left action of~\(A\)
descends to~\(A/I\)
because \(I\cdot \Hilm_n \subseteq \Hilm_n\cdot I\).
The multiplication map~\(\mu_{n,m}\)
induces a well defined multiplication map
\[
\mu_{n,m}|_{A/I}\colon
\Hilm_n|_{A/I} \otimes_{A/I} \Hilm_m|_{A/I}
\to \Hilm_{n+m}|_{A/I}.
\]

\begin{proposition}
  \label{pro:quotient_by_invariant_ideal}
  The restriction
  \(\Hilm|_{A/I} \defeq (A/I,\Hilm_n|_{A/I},\mu_{n,m}|_{A/I})_{n,m\in\N}\)
  is a partial product system.  Its Toeplitz \(\Cst\)\nb-algebra
  is a quotient of~\(\Toep\) by a \(\T\)\nb-invariant ideal.
\end{proposition}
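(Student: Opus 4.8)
The proof splits into two parts: showing that $\Hilm|_{A/I}$ is a partial product system, and then producing the quotient map from $\Toep$ onto its Toeplitz algebra. For the first part I would follow the pattern of Lemma~\ref{lem:restrict_hereditary}, replacing the hereditary corner by a quotient of the Fock module. First, the data $(A/I,\Hilm_n|_{A/I},\mu_{n,m}|_{A/I})$ is a weak partial product system: isometry and associativity of the maps $\mu_{n,m}|_{A/I}$ and the normalisation $\Hilm_0|_{A/I}=A/I$ follow by applying the quotient maps $q_n\colon\Hilm_n\to\Hilm_n|_{A/I}$ to the corresponding identities for~$\Hilm$. Write $\Hilm[F]=\bigoplus_{n\in\N}\Hilm_n$ for the Fock module of~$\Hilm$ with its Fock representation~$(S_n)_{n\in\N}$. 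A short argument with an approximate unit of~$I$ applied to finite truncations shows $\Hilm[F]\cdot I=\bigoplus_{n\in\N}\Hilm_n\cdot I$, so the quotient Hilbert \(A/I\)-module $\Hilm[F]/\Hilm[F]\cdot I$ is canonically the Fock module $\bigoplus_{n\in\N}\Hilm_n|_{A/I}$ of $\Hilm|_{A/I}$.

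Now each $S_n(x)$, $x\in\Hilm_n$, maps $\Hilm[F]\cdot I$ into itself because $\mu_{n,m}$ is a right $A$-module map, and so does $S_n(x)^*$ because it is adjointable, hence right $A$-linear. Thus $S_n(x)$ descends to an adjointable operator $\bar S_n(x)$ on $\Hilm[F]/\Hilm[F]\cdot I$ whose adjoint is the descent of $S_n(x)^*$. The invariance $I\cdot\Hilm_m\subseteq\Hilm_m\cdot I$ is exactly what forces $S_n(x)(\Hilm[F])\subseteq\Hilm[F]\cdot I$ when $x\in\Hilm_n\cdot I$, so $\bar S_n$ factors through a linear map $S'_n\colon\Hilm_n|_{A/I}\to\Bound(\Hilm[F]/\Hilm[F]\cdot I)$. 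Tracing through the definitions, $(S'_n)_{n\in\N}$ is precisely the Fock representation of $\Hilm|_{A/I}$; in particular its creation operators are adjointable. Conditions \ref{def:weak_representation_1}, \ref{def:weak_representation_2} and condition~\ref{simplifiability_1} of Proposition~\ref{simplifiability} for $(S'_n)_{n\in\N}$ are obtained from the corresponding statements for $(S_n)_{n\in\N}$ by applying the quotient maps. Hence $(S'_n)_{n\in\N}$ is a representation, so $\Hilm|_{A/I}$ is a partial product system; let $\Toep'$ be its Toeplitz algebra, with universal representation $(\bar\omega'_n)_{n\in\N}$ and gauge action~$\alpha'$.

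For the second part, observe that since the maps $q_n$ are linear, intertwine $\mu_{n,m}$ with $\mu_{n,m}|_{A/I}$, are compatible with the inner products, and satisfy $S'_n(q_n(x))=\bar S_n(x)$, the maps $\bar\omega'_n\circ q_n\colon\Hilm_n\to\Toep'$ form a representation of~$\Hilm$: conditions \ref{def:weak_representation_1} and~\ref{def:weak_representation_2} are immediate, and condition~\ref{simplifiability_1} for $(\bar\omega'_n\circ q_n)$ reduces to the same condition for~$(\bar\omega'_n)$ together with $S'_n(q_n(x))^*(q_m(y))=q_{m-n}(S_n(x)^*(y))$, which holds because $S'_n(q_n(x))^*$ is the descent of $S_n(x)^*$. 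The universal property of~$\Toep$ then yields a unique \Star{}homomorphism $\pi\colon\Toep\to\Toep'$ with $\pi\circ\bar\omega_n=\bar\omega'_n\circ q_n$ for all $n\in\N$. It is surjective since $\Toep'$ is generated by $\bar\omega'_n(\Hilm_n|_{A/I})=\bar\omega'_n(q_n(\Hilm_n))=\pi(\bar\omega_n(\Hilm_n))$, and it is $\T$-equivariant because on generators $\pi(\alpha_t(\bar\omega_n(x)))=t^n\pi(\bar\omega_n(x))=t^n\bar\omega'_n(q_n(x))=\alpha'_t(\pi(\bar\omega_n(x)))$. Therefore $\ker\pi$ is a $\T$-invariant ideal of~$\Toep$ and $\pi$ induces an isomorphism $\Toep/\ker\pi\cong\Toep'$.

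The only step needing genuine care is showing that $\Hilm|_{A/I}$ is a partial product system, that is, that its creation operators are adjointable and satisfy the covariance relation~\eqref{eq:covariance_for_Fock}. Realising the Fock module of $\Hilm|_{A/I}$ as the quotient $\Hilm[F]/\Hilm[F]\cdot I$ and descending $(S_n)_{n\in\N}$ along it makes both adjointability and the covariance relation automatic, and the invariance of~$I$ enters precisely to make the descent of $(S_n)_{n\in\N}$ factor through the correspondences $\Hilm_n|_{A/I}$; all remaining verifications are routine manipulations with the quotient maps.
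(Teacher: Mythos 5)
Your proposal is correct and follows essentially the same route as the paper: both realise the Fock module of $\Hilm|_{A/I}$ as $\Hilm[F]/\Hilm[F]\cdot I$, descend the Fock representation $(S_n)_{n\in\N}$ along the quotient (using that $\Hilm[F]\cdot I$ is invariant under adjointable operators), and use the invariance of~$I$ to see that the descended operators factor through $\Hilm_n|_{A/I}$ and give its Fock representation. The only cosmetic difference is in the last step, where the paper identifies $\Toep(\Hilm|_{A/I})$ with $\pi(\Toep)$ via the faithfulness of the Fock representation (Theorem~\ref{the:Toeplitz_Fock}), whereas you build the surjection $\Toep\to\Toep(\Hilm|_{A/I})$ from the universal property of~$\Toep$; both yield the same gauge-invariant kernel.
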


\begin{proof}
  The beginning of the following proof works for any ideal
  \(I\idealin A\)
  and will later be used in this generality.  Let~\(\Hilm[F]\)
  be \(\bigoplus_{n=0}^\infty \Hilm_n\)
  with the Fock representation~\((S_n)_{n\in\N}\)
  of~\(\Hilm\).
  The Hilbert submodule
  \(\Hilm[F]\cdot I = \bigoplus_{n\in\N} \Hilm_n\cdot I\)
  is invariant for all adjointable operators on~\(\Hilm[F]\).
  Hence there is a unital \Star{}homomorphism
  \[
  \pi\colon \Bound(\Hilm[F]) \to \Bound(\Hilm[F]/\Hilm[F] I)
  \]
  We may identify
  \(\Hilm[F]/\Hilm[F] I \cong \Hilm[F] \otimes_A A/I\),
  and then \(\pi(T) = T\otimes 1\)
  for all \(T\in\Bound(\Hilm[F])\).
  The Hilbert modules \(\Hilm[F]\)
  and~\(\Hilm[F]/\Hilm[F]I\)
  carry obvious \(\Z\)\nb-gradings,
  which induce actions of the circle group~\(\T\)
  on \(\Bound(\Hilm[F])\)
  and \(\Bound(\Hilm[F]/\Hilm[F] I)\)
  that are continuous in the strict topology.  The
  homomorphism~\(\pi\)
  is grading-preserving.  Composing the Fock
  representation~\((S_n)_{n\in\N}\)
  with the \Star{}homomorphism~\(\pi\)
  gives a representation~\((S'_n)_{n\in\N}\)
  of~\(\Hilm\)
  on \(\Hilm[F]/\Hilm[F] I\).
  The \(\Cst\)\nb-algebra
  generated by~\(S'_n(\Hilm_n)\)
  is \(\pi(\Toep)\).
  Thus it is a quotient of~\(\Toep\).
  Since~\(\pi\)
  is \(\T\)\nb-equivariant,
  its kernel is a gauge-invariant ideal in~\(\Toep\).

  Now assume that~\(I\)
  is invariant.  Then \(S'_n|_{\Hilm_n I}=0\) because
  \[
  \Hilm_n \cdot I \cdot \Hilm_m
  \subseteq \Hilm_n \cdot \Hilm_m \cdot I
  \subseteq \Hilm_{n+m} \cdot I
  \]
  for all \(n,m\in\N\).
  Hence~\((S'_n)_{n\in\N}\)
  descends to a representation of~\(\Hilm|_{A/I}\).
  This representation is unitarily equivalent to the Fock
  representation.  Hence~\(\Hilm|_{A/I}\).
  is a partial product system.  Its Toeplitz \(\Cst\)\nb-algebra
  is isomorphic to \(\pi(\Toep)\)
  by Theorem~\ref{the:Toeplitz_Fock}.  And this is a quotient
  of~\(\Toep\) by a gauge-invariant ideal.
\end{proof}

\section{Gauge-Invariant ideals in the Toeplitz algebra}
\label{sec:gauge-invariant_ideals}

Let~\(B\)
be a \(\Cst\)\nb-algebra
with a continuous \(\T\)\nb-action
and let \(B_n\subseteq B\)
be the homogeneous subspaces.  Call an ideal \(J\idealin B_0\)
\emph{invariant} if \(J\cdot B_n \subseteq B_n\cdot J\)
for all \(n\in\Z\);
since \(B_n^* = B_{-n}\),
this is equivalent to \(J\cdot B_n = B_n\cdot J\)
only for \(n\in\N_{>0}\).
Given a \(\T\)\nb-invariant
ideal \(I\idealin B\),
its restriction \(I\cap B_0\)
is an invariant ideal in~\(B_0\).
Conversely, if \(J\idealin B_0\)
is an invariant ideal, then \(J\cdot B = B\cdot J\)
is a \(\T\)\nb-invariant
ideal in~\(B\).
It is well known that these two maps are isomorphisms inverse to each
other between the lattices of gauge-invariant ideals in~\(B\)
and of invariant ideals in~\(B_0\).

We are going to apply this general result to the Toeplitz
\(\Cst\)\nb-algebra
of a partial product system with its canonical \(\T\)\nb-action.
The gauge-invariant ideals of the
Toeplitz \(\Cst\)\nb-algebra
of an ordinary product system are described completely by
Katsura~\cite{Katsura:Ideal_structure_correspondences}.  Like Katsura,
we describe gauge-invariant ideals in a Toeplitz \(\Cst\)\nb-algebra
by a pair of ideals \(I\idealin J \idealin A\).
Here the ideal~\(I\)
is invariant, and any invariant ideal occurs.  We do not know, in
general, which ideals~\(J\)
are possible.  So our result is not as complete as Katsura's result
for product systems.

Throughout this section, we fix a \(\Cst\)\nb-algebra~\(A\)
and a partial product system
\(\Hilm= (A,\Hilm_n,\mu_{n,m})_{n,m\in\N}\).
Let~\(\Toep\)
be its Toeplitz \(\Cst\)\nb-algebra
and let \(\Toep_n\subseteq \Toep\)
for \(n\in\Z\)
be its homogeneous subspaces for the canonical \(\T\)\nb-action.
Let~\(\Hilm[F]\)
be~\(\bigoplus_{n=0}^\infty \Hilm_n\)
with the Fock representation~\((S_n)_{n\in\N}\)
of~\(\Hilm\).
To simplify notation, we view \(\Comp(\Hilm_m,\Hilm_n)\)
for \(m,n\in\N\)
as subspaces of~\(\Toep\),
that is, we drop the name~\(\Theta_{m,n}\)
for their canonical embeddings.  In particular, we view
\(\Hilm_n \cong \Comp(A,\Hilm_n)\)
and \(A=\Hilm_0\) as subspaces of~\(\Toep\).

\begin{lemma}
  \label{lem:kernel_A_invariant}
  If \(H\idealin\Toep\)
  is an ideal, then \(I\defeq H\cap A\idealin A\)
  is an invariant ideal, that is,
  \(I\cdot \Hilm_n \subseteq \Hilm_n\cdot I\)
  for all \(n\in\N\).
  Conversely, if~\(I\)
  is an invariant ideal in~\(A\),
  then the kernel~\(H\)
  of the canonical homomorphism
  \(\Toep\to\Toep(\Hilm|_{A/I})\)
  is a gauge-invariant ideal with \(I = H\cap A\).
  It is the minimal ideal \(H\idealin\Toep\)
  with \(I \subseteq H\cap A\).
\end{lemma}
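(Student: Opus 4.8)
The plan is to establish the three assertions in turn, the main tools being Lemma~\ref{lem:faithful_A_Hilm_Comp}, the injectivity of the canonical representation from Proposition~\ref{pro:sum_compacts_injects} (which is what lets us view \(A=\Hilm_0\) and the \(\Hilm_n\) as subspaces of~\(\Toep\)), and Proposition~\ref{pro:quotient_by_invariant_ideal}. First, for the invariance claim, take an ideal \(H\idealin\Toep\) with quotient map \(q\colon\Toep\to\Toep/H\). The maps \(\omega_n\defeq q\circ\bar\omega_n\colon\Hilm_n\to\Toep/H\) form a weak representation of~\(\Hilm\), since conditions \ref{def:weak_representation_1} and~\ref{def:weak_representation_2} in Definition~\ref{def:weak_representation} are preserved by \Star{}homomorphisms. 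Under the identification \(A\subseteq\Toep\) one has \(\ker\omega_0=\setgiven{a\in A}{\bar\omega_0(a)\in H}=A\cap H=I\), and Lemma~\ref{lem:faithful_A_Hilm_Comp} says precisely that this kernel is invariant. (Equivalently, one can argue directly: for \(a\in I\) and \(x\in\Hilm_n\) the element \(ax=\bar\omega_0(a)\bar\omega_n(x)\) lies in~\(H\), so \(\braket{ax}{ax}_A=\bar\omega_n(ax)^*\bar\omega_n(ax)\in H\cap A=I\), whence \(ax\in\Hilm_n\cdot I\) as in the proof of Lemma~\ref{lem:faithful_A_Hilm_Comp}.)

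For the converse, let \(I\idealin A\) be invariant. Proposition~\ref{pro:quotient_by_invariant_ideal} supplies the canonical surjection \(q\colon\Toep\to\Toep(\Hilm|_{A/I})\) and shows that \(H\defeq\ker q\) is gauge-invariant. Unwinding the construction of~\(q\) in that proof, \(q\circ\bar\omega_n\) equals the canonical (Fock) representation \(\bar\omega_n'\) of~\(\Hilm|_{A/I}\) composed with the quotient map \(\mathrm{pr}_n\colon\Hilm_n\to\Hilm_n|_{A/I}=\Hilm_n/\Hilm_n\cdot I\); in particular \(q\circ\bar\omega_0\) is the composite of \(A\prto A/I\) with the canonical embedding \(\bar\omega_0'\colon A/I\injto\Toep(\Hilm|_{A/I})\), and the latter is injective by Proposition~\ref{pro:sum_compacts_injects} applied to~\(\Hilm|_{A/I}\). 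Hence \(H\cap A=\ker(q\circ\bar\omega_0)=\ker(A\prto A/I)=I\).

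For minimality, let \(H'\idealin\Toep\) be any ideal with \(I\subseteq H'\cap A\) and quotient map \(q'\colon\Toep\to\Toep/H'\). The weak representation \(\omega_n'\defeq q'\circ\bar\omega_n\) of~\(\Hilm\) kills \(\Hilm_n\cdot I\) for each~\(n\), because \(\omega_n'(x\cdot a)=\omega_n'(x)\,q'(\bar\omega_0(a))=0\) for \(x\in\Hilm_n\) and \(a\in I\subseteq H'\). So \(\omega_n'\) descends to maps \(\tilde\omega_n\colon\Hilm_n|_{A/I}\to\Toep/H'\), and these form a representation of~\(\Hilm|_{A/I}\): since \(\tilde\omega_n(\Hilm_n|_{A/I})=\omega_n'(\Hilm_n)\) and the structure maps of~\(\Hilm|_{A/I}\) are those induced from~\(\Hilm\), conditions \ref{def:weak_representation_1}, \ref{def:weak_representation_2}, \ref{def:weak_representation_4} and~\ref{def:weak_representation_5} for~\(\tilde\omega\) follow at once from the corresponding conditions for~\(\omega'\). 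The universal property of \(\Toep(\Hilm|_{A/I})\) then yields a \Star{}homomorphism \(\varrho\colon\Toep(\Hilm|_{A/I})\to\Toep/H'\) with \(\varrho\circ\bar\omega_n'=\tilde\omega_n\); combined with the description of \(q\circ\bar\omega_n\) from the previous paragraph this gives \(\varrho\circ q=q'\) on the generators \(\bar\omega_n(x)\), hence everywhere, so \(H=\ker q\subseteq\ker q'=H'\). I expect the only delicate points to be the two descents (a representation of~\(\Hilm\) annihilating \(I\subseteq A\) descending to \(\Hilm|_{A/I}\), and the identification of the composite \(A\hookrightarrow\Toep\to\Toep(\Hilm|_{A/I})\) inside Proposition~\ref{pro:quotient_by_invariant_ideal}); neither is deep, and the substance is carried entirely by the cited results.
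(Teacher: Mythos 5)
Your proposal is correct and follows essentially the same route as the paper: invariance of $H\cap A$ via Lemma~\ref{lem:faithful_A_Hilm_Comp} applied to the induced representation in $\Toep/H$, the converse via Proposition~\ref{pro:quotient_by_invariant_ideal} together with injectivity of $A/I\to\Toep(\Hilm|_{A/I})$ (you cite Proposition~\ref{pro:sum_compacts_injects} where the paper cites Theorem~\ref{the:Toeplitz_gauge_fixed}; both suffice), and minimality by descending the representation in $\Toep/H'$ to one of $\Hilm|_{A/I}$ and invoking the universal property. The extra verifications you flag as delicate (the two descents) are exactly the steps the paper leaves implicit, and your treatment of them is sound.
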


\begin{proof}
  Let \(H\idealin\Toep\).
  There is a canonical representation~\((\omega_n)_{n\in\N}\)
  of~\(\Hilm\)
  in~\(\Toep/H\),
  and~\(I\)
  is its kernel.  Hence~\(I\)
  is invariant by Lemma~\ref{lem:faithful_A_Hilm_Comp}.  Conversely,
  let \(I\idealin A\)
  be an invariant ideal.  Then we may restrict our partial product
  system to the quotient~\(A/I\)
  as in Section~\ref{sec:restrict_quotient}.  This restriction is a
  partial product system~\(\Hilm|_{A/I}\),
  whose Toeplitz \(\Cst\)\nb-algebra
  \(\Toep(\Hilm|_{A/I})\)
  is a quotient of~\(\Toep\)
  by some gauge-invariant ideal~\(H\)
  by Proposition~\ref{pro:quotient_by_invariant_ideal}.  The map
  from~\(A/I\)
  to the Toeplitz \(\Cst\)\nb-algebra
  of \(\Hilm|_{A/I}\)
  is injective by Theorem~\ref{the:Toeplitz_gauge_fixed}.  So the
  intersection \(H\cap A\) is~\(I\).

  Let \(L\idealin \Toep\)
  be any ideal with \(I\subseteq L\cap A\).
  The canonical representation of~\(\Hilm\)
  in~\(\Toep\)
  induces a representation in~\(\Toep/L\).
  This representation kills~\(I\)
  and hence also \(\Hilm_n\cdot I\)
  for all \(n\in\N\).
  Therefore, it descends to a representation of~\(\Hilm|_{A/I}\).
  Hence the quotient map \(\Toep\prto\Toep/L\)
  factors through the Toeplitz \(\Cst\)\nb-algebra
  of~\(\Hilm|_{A/I}\).
  Thus~\(H\)
  above is minimal among the ideals \(L\idealin \Toep\)
  with \(I\subseteq L\cap A\).
\end{proof}

\begin{definition}
  \label{def:covariance_ideal}
  Let \((\omega_n)_{n\in\N}\)
  be a representation of the partial product system~\(\Hilm\)
  in a \(\Cst\)\nb-algebra~\(B\).
  Let \(\Hilm_{\ge n} B \subseteq B\)
  denote the closed linear span of \(\omega_m(\Hilm_m)\cdot B\)
  for all \(m\ge n\).
  The \emph{covariance ideal} of the representation is
  \[
  J(\omega_n) \defeq
  \setgiven{a\in A}{\omega_0(a)\cdot B\subseteq \Hilm_{\ge1} B}.
  \]
  We call~\((\omega_n)_{n\in\N}\)
  \emph{covariant} on an ideal \(J\idealin A\)
  if \(J\subseteq J(\omega_n)\).
\end{definition}

We are going to characterise a gauge-invariant ideal~\(H\)
by the pair~\((I,J)\)
of ideals in~\(A\),
where \(I\defeq H\cap A\)
and~\(J\)
is the covariance ideal of the canonical representation of~\(\Hilm\)
in~\(\Toep/H\).

For any ideal \(J\idealin A\),
there is a \(\Cst\)\nb-algebra
extension
\[
\Bound(\Hilm[F],\Hilm[F] J) \into
\Bound(\Hilm[F]) \prto \Bound(\Hilm[F]/\Hilm[F] J),
\]
which is also \(\T\)\nb-equivariant
(see the proof of Proposition~\ref{pro:quotient_by_invariant_ideal}).
Hence
\[
V(J) \defeq \Toep\cap \Bound(\Hilm[F],\Hilm[F]J)
\]
is a gauge-invariant ideal in~\(\Toep\)
for each ideal \(J\idealin A\).
It consists of those operators in~\(\Toep\)
that act by zero on \(\Hilm[F]/\Hilm[F]J\).
Let \(L^0(J)\subseteq\Bound(\Hilm[F]/\Hilm[F] J)\)
be the \(\T\)\nb-invariant
\Star{}subalgebra of finite block matrices:
\[
L^0(J) \defeq \setgiven{ x\in \Bound(\Hilm[F]/\Hilm[F] J)}
{\text{there is }N\in\N\text{ with }x_{n,m}=0\text{ for }n>N
\text{ or }m>N}.
\]
Here \(x_{n,m}\in\Bound(\Hilm_m/\Hilm_m J,\Hilm_n/\Hilm_n J)\)
denotes the \(n,m\)-matrix entry of~\(x\).  Let
\(L(J) \subseteq \Toep\) be the preimage of
\(\overline{L^0(J)}\subseteq \Bound(\Hilm[F]/\Hilm[F]J)\).
This is an ideal because \(\omega_n(x)\)
is a multiplier of~\(L^0(J)\)
for each \(x\in\Hilm_n\), \(n\in\N\).
The ideal~\(L(J)\)
is also \(\T\)\nb-invariant.

\begin{theorem}
  \label{the:ideal_Toeplitz}
  Let~\(H\)
  be a gauge-invariant ideal in the Toeplitz \(\Cst\)\nb-algebra
  of~\(\Hilm\).
  Let \(I\defeq H\cap A\)
  and let~\(J\)
  be its covariance ideal.  Then \(H = V(J) \cap L(I)\).
  In particular, the ideals \(I\)
  and~\(J\)
  determine~\(H\) uniquely.
\end{theorem}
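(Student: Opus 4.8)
The plan is to reduce the statement to an equality of invariant ideals in the gauge-fixed algebra~$\Toep_0$, realise it inside~$\Bound(\Hilm[F])$ through the Fock representation, and then compare the two ideals blockwise along the Fock grading. Since~$H$ is gauge-invariant by hypothesis and $V(J)$ and~$L(I)$ are gauge-invariant ideals, as noted above, the bijection between gauge-invariant ideals of~$\Toep$ and invariant ideals of~$\Toep_0$ recalled at the beginning of this section reduces the claim to $H\cap\Toep_0=V(J)\cap L(I)\cap\Toep_0$. Using Theorem~\ref{the:Toeplitz_Fock} I view $\Toep\subseteq\Bound(\Hilm[F])$, and by Theorem~\ref{the:Toeplitz_gauge_fixed} every element of~$\Toep_0$ is a block-diagonal operator, which I write as $T=(T_k)_{k\in\N}$ with $T_k\in\Bound(\Hilm_k)$. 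Unwinding the definitions gives, for $T\in\Toep_0$, that $T\in V(J)$ iff $T_k\Hilm_k\subseteq\Hilm_k\cdot J$ for all~$k$ --- because $\Hilm[F]J=\bigoplus_k\Hilm_k J$ and block-diagonality reduces the inclusion $T\Hilm[F]\subseteq\Hilm[F]J$ to the individual blocks --- and that $T\in L(I)$ iff $\norm{T_k\otimes 1_{A/I}}\to 0$, since the image of~$T$ in $\Bound(\Hilm[F]/\Hilm[F]I)$ is again block-diagonal and lies in the closure of the finite block matrices exactly when its blocks go to~$0$. Thus the theorem becomes: a block-diagonal $T=(T_k)\in\Toep_0$ lies in~$H$ iff $T_k\Hilm_k\subseteq\Hilm_k\cdot J$ for all~$k$ and $\norm{T_k\otimes 1_{A/I}}\to 0$.

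For the inclusion $H\cap\Toep_0\subseteq V(J)\cap L(I)$ I first use that~$H$ contains $V(I)$, the minimal gauge-invariant ideal with $I\subseteq V(I)\cap A$ (this is the ideal of Lemma~\ref{lem:kernel_A_invariant}, by Proposition~\ref{pro:quotient_by_invariant_ideal}; moreover $V(I)\subseteq L(I)$ and $V(I)\subseteq V(J)$ because $I\subseteq J$). Passing to $\Toep/V(I)\cong\Toep(\Hilm|_{A/I})$ I may therefore assume $I=0$, i.e.\ $H\cap A=0$; here one checks that the covariance ideal of the canonical representation of $\Hilm|_{A/I}$ in $\Toep/H$ is~$J/I$, so nothing is lost. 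In this situation I must show that every $T=(T_k)\in H\cap\Toep_0$ satisfies $T_k\Hilm_k\subseteq\Hilm_k\cdot J$ and $\norm{T_k}\to 0$. The blockwise covariance condition I extract from Definition~\ref{def:covariance_ideal} by a gauge-degree analysis of the relation $\bar\omega_0(a)\Toep\subseteq\Hilm_{\ge 1}\Toep+H$ that defines $a\in J$: inspecting it in each spectral degree, and using that the degree-$0$ part of $\Hilm_{\ge 1}\Toep$ equals $\overline{\sum_{m\ge 1}\bar\Theta_{m,m}(\Comp(\Hilm_m))}$ --- the kernel of the block-$0$ homomorphism $\Toep_0\to A$ --- together with the linear independence of the subspaces $\bar\Theta_{m,n}(\Comp(\Hilm_m,\Hilm_n))$ from Proposition~\ref{pro:sum_compacts_injects}, one turns membership of~$T$ in~$H$ into the containments $T_k\Hilm_k\subseteq\Hilm_k\cdot J$.

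The decay $\norm{T_k}\to 0$ is the main obstacle: it says that a gauge-invariant ideal of~$\Toep$ meeting~$A$ trivially consists of limits of block-finite operators, i.e.\ is contained in~$L(0)$. I would prove this by combining the description of~$\Toep_0$ as the inductive limit of the~$\Toep_{0,N}$ (Theorem~\ref{the:Toeplitz_gauge_fixed}) with the fact that the block homomorphisms $T\mapsto T_j$ present each~$\Toep_{0,N}$ as an iterated extension with successive subquotients $A,\Comp(\Hilm_1),\dots,\Comp(\Hilm_N)$, with~$A$ at the top, and then arguing, level by level, that an ideal of~$\Toep_0$ whose intersection with~$A$ is zero cannot contribute a persistent part at arbitrarily high levels, forcing the blocks of each of its elements to vanish in the limit. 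This is the point at which the hypotheses $H\cap A=0$ and gauge-invariance interlock most tightly, and I expect it to carry the bulk of the work.

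For the reverse inclusion $V(J)\cap L(I)\cap\Toep_0\subseteq H$ I again work modulo~$V(I)$ and reuse the block picture: given block-diagonal $T=(T_k)\in\Toep_0$ with $T_k\Hilm_k\subseteq\Hilm_k\cdot J$ for all~$k$ and $\norm{T_k\otimes 1_{A/I}}\to 0$, the decay lets me approximate~$T$, modulo~$V(I)$, by elements of~$\Toep_{0,N}$ with~$N$ fixed; inside such a~$\Toep_{0,N}$ the conditions $T_k\Hilm_k\subseteq\Hilm_k\cdot J$ propagate down the composition series, and --- using that the canonical representation of~$\Hilm$ in~$\Toep/H$ kills~$I$ and is covariant on~$J$, which is how~$J$ was produced, together with the multiplicativity~\eqref{eq:Theta_multiplicative} of the maps $\bar\Theta_{m,n}$ --- one shows that these approximants lie in~$H$ up to an error controlled by the approximation, and then passes to the limit to obtain $T\in H$. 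Since the whole argument is gauge-equivariant, Theorem~\ref{inje} confirms that verifying everything on~$\Toep_0$ suffices; the remaining delicate point besides the block-finiteness statement is the bookkeeping that converts the single covariance ideal~$J$ into the family of blockwise conditions on the~$T_k$.
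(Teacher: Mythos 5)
Your overall skeleton agrees with the paper's: reduce to the gauge-fixed subalgebra $\Toep_0$, realise everything in $\Bound(\Hilm[F])$ via Theorem~\ref{the:Toeplitz_Fock}, and compare blocks along the Fock grading. But the proposal leaves the two hardest steps unproved, and in both cases the sketch you offer in their place would not go through as stated. First, your claim that a gauge-invariant ideal with $H\cap A=0$ must lie in $L(0)$ is exactly where you say you ``expect the bulk of the work'' to be, and the composition-series heuristic you propose is not an argument: an ideal of $\Toep_{0,N}$ meeting the subspace $\Comp(\Hilm_0)=A$ trivially is not, for soft structural reasons, forced into $\sum_{j\ge1}\Comp(\Hilm_j)$ or into the block-finite part. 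The mechanism that actually works (Lemma~\ref{lem:kernel_covariance} in the paper) is a concrete computation: for $X=\sum_{i=0}^N x_i\in H\cap\Toep_{0,N}$ and $\ell\ge N$ one has $\omega(X)\omega_\ell(y)=\omega_\ell\bigl(\sum_i\vartheta_i^\ell(x_i)y\bigr)$ in $\Toep/H$, and $\ker\omega_\ell=\Hilm_\ell\cdot I$ (Lemma~\ref{lem:faithful_A_Hilm_Comp}) then forces the blocks $y_\ell$ for $\ell\ge N$ to vanish exactly modulo $I$ --- not merely to decay; the inclusion into $L(I)$ then follows from the inductive-limit description of $\Toep_0$. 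Second, for the reverse inclusion you need to show that a block-finite element satisfying the blockwise $J$- and $I$-conditions is killed by $\Toep\to\Toep/H$; ``propagating down the composition series'' plus multiplicativity of the $\bar\Theta_{m,n}$ does not yield this. The paper's proof needs a genuinely nontrivial trick here: reduce to $X$ self-adjoint, show $\omega(X)\Hilm_{\ge\ell}B\subseteq\Hilm_{\ge\ell+1}B$ for $\ell<N$ and $\omega(X)\Hilm_{\ge N}B=0$, conclude $\omega(X)^{N+1}=0$ and hence $\omega(X)=0$.

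There is a third gap in your final limiting argument. Membership in $L(I)$ only gives $\norm{\bar T_k}\to0$, whereas membership in $H$ for an element of $\Toep_{0,N}$ requires $\bar T_k=0$ exactly for $k\ge N$. You cannot bridge this by ``approximating by elements of $\Toep_{0,M}$'': the blocks $y_\ell$ for $\ell\ge N$ of an element of $\Toep_{0,N}$ are determined by $x_0,\dotsc,x_N$ via $y_\ell=\sum_i\vartheta_i^\ell(x_i)$, so truncating the tail does not produce an element of the Toeplitz algebra. The paper closes this gap with a positivity argument: restrict to positive $y$, note that $(\bar y_\ell-\varepsilon)_+=0$ for $\ell\ge M$ for some $M\ge N$, apply the blockwise characterisation in $\Toep_{0,M}$ to conclude $(y-\varepsilon)_+\in H$, then apply it again in $\Toep_{0,N}$ to get $(\bar y_\ell-\varepsilon)_+=0$ already for $\ell\ge N$, and let $\varepsilon\to0$. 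Your reduction to $I=0$ via $\Toep/V(I)$ is a legitimate organisational variant (the paper uses it only after the theorem), but it does not remove any of these three obstacles.
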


The proof will use a couple of lemmas.  First we examine the
covariance ideal more closely.  The right ideal
\(\Hilm_{\ge1}\Toep\)
is easily seen to be the closure of
\(\sum_{n\ge0,m\ge1} \Comp(\Hilm_n,\Hilm_m)\).
Its intersection with~\(\Toep_0\)
is the closure of
\(\sum_{m\ge1} \Comp(\Hilm_m)\).
So the covariance ideal~\(J\)
for the canonical representation in~\(\Toep/H\)
is the set of all \(a\in A\)
that are identified with an element in the closure of
\(\sum_{m\ge1} \Comp(\Hilm_m)\)
in the quotient~\(\Toep/H\).
For instance, a Cuntz--Pimsner-like covariance condition
for~\(\Hilm_m\)
would identify \(a\sim\vartheta_0^m(a)\)
for certain elements \(a\in A\)
with \(\vartheta_0^m(a)\in\Comp(\Hilm_m)\).
Here \(\vartheta_j^k\colon \Comp(\Hilm_j) \to \Bound(\Hilm_k)\)
for \(0\le j \le k\)
is the map defined by the Fock representation, that is,
\(\vartheta_j^k(\ket{x}\bra{y})(z) = S_j(x) S_j(y)^* z\)
for all \(x,y\in\Hilm_j\), \(z\in\Hilm_k\).
The covariance ideal allows more complicated relations that identify
some elements of~\(A\)
with elements of the closure of
\(\sum_{m\ge1} \Comp(\Hilm_m)\).

If \(n,k\in\N\), then
\begin{equation}
  \label{eq:omega_shifts}
  \omega_n(\Hilm_n) \Hilm_{\ge k}B \subseteq \Hilm_{\ge n+k}B,\qquad
  \omega_n(\Hilm_n)^* \Hilm_{\ge n+k}B \subseteq \Hilm_{\ge k}B.
\end{equation}
the second property uses condition~\ref{simplifiability_5} in
Proposition~\ref{simplifiability}.  Hence the induced representations
\(\Theta_{n,m}\colon \Comp(\Hilm_n,\Hilm_m) \to B\) satisfy
\begin{equation}
  \label{eq:Theta_shifts}
  \Theta_{n,m}(\Comp(\Hilm_n,\Hilm_m)) \Hilm_{\ge k}B \subseteq
  \Hilm_{\ge \max\{m,k-n+m\}}B.
\end{equation}

\begin{lemma}
  \label{lem:covariant_on_Comp}
  Let \((\omega_n)_{n\in\N}\)
  be a representation of the partial product
  system~\(\Hilm\)
  in a \(\Cst\)\nb-algebra~\(B\) with covariance ideal~\(J\).  Then
  \[
  \Comp(\Hilm_n,\Hilm_m J) = \setgiven{T\in \Comp(\Hilm_n,\Hilm_m)}
  {\Theta_{n,m}(T)\cdot \Hilm_{\ge n}B \subseteq \Hilm_{\ge m+1} B}.
  \]
\end{lemma}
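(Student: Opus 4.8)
The plan is to prove the two inclusions separately, and the technical heart of the argument is a \emph{claim} that I would establish first: for $w\in\Hilm_m$ one has $\omega_m(w)\cdot B\subseteq\Hilm_{\ge m+1}B$ \emph{if and only if} $w\in\Hilm_m J$. This is the ``single-fibre'' version of the lemma, and the full statement then follows from it by means of the identity $\Theta_{n,m}(T)\,\omega_n(y)=\omega_m(Ty)$ from~\eqref{eq:Theta_multiplicative_0}, the standard identification $\Hilm_m J=\setgiven{w\in\Hilm_m}{\braket{w}{w}_A\in J}$, and the characterisation of $\Comp(\Hilm_n,\Hilm_m J)$ as the set of $T\in\Comp(\Hilm_n,\Hilm_m)$ with range in $\Hilm_m J$ that was already used in the proof of Lemma~\ref{lem:faithful_A_Hilm_Comp}.

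For the claim itself: the ``if'' direction uses Cohen--Hewitt to factor $w=\zeta a$ with $\zeta\in\Hilm_m$, $a\in J$, so that $\omega_m(w)=\omega_m(\zeta)\omega_0(a)$ by condition~\ref{def:weak_representation_1}; then
\[
\omega_m(w)B=\omega_m(\zeta)\,\omega_0(a)B\subseteq\omega_m(\zeta)\,\Hilm_{\ge1}B\subseteq\Hilm_{\ge m+1}B,
\]
where the first inclusion is the definition of the covariance ideal (Definition~\ref{def:covariance_ideal}) and the second is~\eqref{eq:omega_shifts}. The ``only if'' direction runs the same chain backwards: from $\omega_m(w)B\subseteq\Hilm_{\ge m+1}B$ and condition~\ref{def:weak_representation_2},
\[
\omega_0(\braket{w}{w}_A)\cdot B=\omega_m(w)^*\omega_m(w)B\subseteq\omega_m(\Hilm_m)^*\Hilm_{\ge m+1}B\subseteq\Hilm_{\ge1}B
\]
by the second inclusion in~\eqref{eq:omega_shifts}, so $\braket{w}{w}_A\in J$ and hence $w\in\Hilm_m J$.

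With the claim in hand I would finish as follows. For ``$\subseteq$'': the right-hand side of the lemma is a closed linear subspace of $\Comp(\Hilm_n,\Hilm_m)$, since $T\mapsto\Theta_{n,m}(T)$ is continuous and $\setgiven{x\in B}{x\cdot\Hilm_{\ge n}B\subseteq\Hilm_{\ge m+1}B}$ is norm-closed, so it suffices to test rank-ones $T=\ket{\xi}\bra{y}$ with $\xi\in\Hilm_m J$, $y\in\Hilm_n$; writing $\xi=\zeta a$ as before, $\Theta_{n,m}(T)=\omega_m(\zeta)\omega_0(a)\omega_n(y)^*$, and chaining $\omega_n(y)^*\Hilm_{\ge n}B\subseteq B$, then $\omega_0(a)B\subseteq\Hilm_{\ge1}B$, then $\omega_m(\zeta)\Hilm_{\ge1}B\subseteq\Hilm_{\ge m+1}B$ (from~\eqref{eq:omega_shifts} and Definition~\ref{def:covariance_ideal}) yields the inclusion. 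For ``$\supseteq$'': if $\Theta_{n,m}(T)\Hilm_{\ge n}B\subseteq\Hilm_{\ge m+1}B$, then since $\omega_n(\Hilm_n)B\subseteq\Hilm_{\ge n}B$ and $\Theta_{n,m}(T)\omega_n(y)=\omega_m(Ty)$ we get $\omega_m(Ty)\cdot B\subseteq\Hilm_{\ge m+1}B$ for every $y\in\Hilm_n$; the claim forces $Ty\in\Hilm_m J$ for all $y$, i.e.\ $T$ has range in $\Hilm_m J$, i.e.\ $T\in\Comp(\Hilm_n,\Hilm_m J)$.

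I do not anticipate a genuine obstacle: the argument is essentially bookkeeping with the shift inclusions~\eqref{eq:omega_shifts}. The one point that needs care is the extra ``$+1$'' in $\Hilm_{\ge m+1}B$ — as opposed to the $\Hilm_{\ge m}B$ one already gets from~\eqref{eq:Theta_shifts} alone — which is produced, on both sides of the asserted equality, precisely by the step $\omega_0(a)B\subseteq\Hilm_{\ge1}B$ for $a\in J$; so one must make sure the factor coming from $J$ is always inserted exactly where it is needed.
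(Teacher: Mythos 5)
Your proof is correct and follows essentially the same route as the paper: both directions rest on the same two computations, namely factoring through an element of $J$ (via Cohen--Hewitt, respectively the rank-one decomposition $\ket{x}a\bra{y}$) and using the shift inclusions~\eqref{eq:omega_shifts} for one inclusion, and pairing $T(x)$ against $\Hilm_m$ to land the inner product in $J$ for the other. Isolating the single-fibre claim first is only a repackaging of the paper's argument, which works directly with a general $T\in\Comp(\Hilm_n,\Hilm_m)$ and concludes via the same approximate-unit characterisation of $\Comp(\Hilm_n,\Hilm_m J)$.
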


\begin{proof}
  Let \(T\in\Comp(\Hilm_n,\Hilm_m)\).
  We first assume \(T\in\Comp(\Hilm_n,\Hilm_m J)\).
  Then~\(T\)
  is in the closed linear span of operators of the form
  \(\ket{x} a \bra{y}\)
  with \(x\in\Hilm_m\),
  \(a\in J\),
  \(y\in\Hilm_n\).  Equation~\eqref{eq:omega_shifts} implies
  \[
  \Theta_{n,m}(\ket{x} a \bra{y})\Hilm_{\ge n}B
  \subseteq \omega_m(x) \omega_0(a) B
  \subseteq \omega_m(x) \Hilm_{\ge1}B
  \subseteq \Hilm_{\ge m+1}B.
  \]
  Hence \(\Theta_{n,m}(T)(\Hilm_{\ge n}B)\subseteq \Hilm_{\ge m+1}B\).

  Conversely, assume
  \(\Theta_{n,m}(T)(\Hilm_{\ge n}B)\subseteq \Hilm_{\ge m+1}B\).
  Let \(x\in\Hilm_n\), \(y\in\Hilm_m\).  Then
  \begin{multline*}
    \omega_0\bigl(\braket{y}{T(x)}\bigr) B
    = \omega_m(y)^* \omega_m(T(x)) B
    = \omega_m(y)^* \Theta_{n,m}(T) \omega_n(x) B
    \\ \subseteq \omega_m(y)^* \Theta_{n,m}(T) \Hilm_{\ge n} B
    \subseteq \omega_m(y)^* \Hilm_{\ge m+1} B
    \subseteq \Hilm_{\ge 1} B.
  \end{multline*}
  Thus \(\braket{y}{T(x)} \in J\).
  Since~\(y\)
  is arbitrary, this implies \(T(x) \in \Hilm_m\cdot J\).
  Since~\(x\)
  is arbitrary, this implies
  \(T \cdot \Comp(\Hilm_n) \subseteq \Comp(\Hilm_n,\Hilm_m \cdot J)\).
  Multiplying~\(T\)
  with an approximate unit in~\(\Comp(\Hilm_n)\),
  we get \(T\in\Comp(\Hilm_n,\Hilm_m \cdot J)\).
\end{proof}

\begin{lemma}
  \label{lem:kernel_covariance}
  Let \((\omega_n)_{n\in\N}\)
  be a representation of the partial product system~\(\Hilm\)
  in a \(\Cst\)\nb-algebra~\(B\).
  Let~\(J\)
  be its covariance ideal and let \(I \defeq \ker \omega_0\).
  Let \(\omega\colon \Toep\to B\)
  be the associated representation of the Toeplitz
  \(\Cst\)\nb-algebra~\(\Toep\).
  Let \(x_i\in\Comp(\Hilm_i)\)
  for \(i=0,\dotsc,N\)
  and \(X\defeq \sum_{i=1}^N x_i \in \Toep\).
  The following are equivalent:
  \begin{enumerate}
  \item \label{kernel_covariance_1}%
    \(\omega(X)=0\) in~\(B\);
  \item \label{kernel_covariance_2}%
    \(\sum\limits_{i=0}^\ell \vartheta_i^\ell(x_i)\in
    \Bound(\Hilm_\ell,\Hilm_\ell\cdot J)\)
    if \(\ell< N\)
    and
    \(\sum\limits_{i=0}^N
    \vartheta_i^\ell(x_i) \in \Bound(\Hilm_\ell,\Hilm_\ell\cdot I)\)
    if \(\ell\ge N\).
  \end{enumerate}
\end{lemma}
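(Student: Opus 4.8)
The plan is to test $\omega(X)=\sum_{i=0}^N\Theta_{i,i}(x_i)$ against each of the spaces $\omega_\ell(\Hilm_\ell)$, $\ell\in\N$, where $\Theta_{i,i}$ are the maps attached to $(\omega_n)_{n\in\N}$ as in~\eqref{eq:def_Theta}, and to read off the two clauses of~\ref{kernel_covariance_2} from this. First I would record the reduction: $\omega(X)=0$ if and only if $\omega(X)\,\omega_\ell(\Hilm_\ell)=0$ for every $\ell\in\N$. Here ``only if'' is trivial; for ``if'', recall that $\omega(\Toep)$ is the closed linear span of the spaces $\Theta_{m,n}(\Comp(\Hilm_m,\Hilm_n))=\omega_n(\Hilm_n)\omega_m(\Hilm_m)^*$, so $\omega(X)\,\omega(\Toep)=0$ and hence $\omega(X)\omega(X)^*=0$ because $\omega(X)^*\in\omega(\Toep)$.

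Next I would compute $\omega(X)\omega_\ell(z)$ for $z\in\Hilm_\ell$. For $i\le\ell$, condition~\ref{simplifiability_1} in Proposition~\ref{simplifiability} gives $\Theta_{i,i}(x_i)\omega_\ell(z)=\omega_\ell(\vartheta_i^\ell(x_i)(z))$ (check this for rank-one $x_i$ and extend by linearity and continuity), while for $i>\ell$ one has $\Theta_{i,i}(x_i)\omega_\ell(z)\in\Hilm_{\ge i}B\subseteq\Hilm_{\ge\ell+1}B$ by~\eqref{eq:Theta_shifts}. With $T^{(\ell)}\defeq\sum_{i=0}^{\min\{\ell,N\}}\vartheta_i^\ell(x_i)\in\Bound(\Hilm_\ell)$ — which is the $\ell$-th diagonal block of the Fock image of $X$, by the proof of Theorem~\ref{the:Toeplitz_gauge_fixed} — this yields $\omega(X)\omega_\ell(z)=\omega_\ell(T^{(\ell)}(z))+R_\ell(z)$ with $R_\ell(z)\in\Hilm_{\ge\ell+1}B$, and $R_\ell=0$ when $\ell\ge N$. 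I would also record that $\omega_\ell(w)\in\Hilm_{\ge\ell+1}B$ if and only if $w\in\Hilm_\ell\cdot J$, equivalently $\braket{w}{w}\in J$: for ``if'', write $w=w'a$ with $a\in J$ (Cohen--Hewitt) and use $\omega_\ell(w'a)=\omega_\ell(w')\omega_0(a)$ together with $\omega_0(J)\subseteq\Hilm_{\ge1}B$ and~\eqref{eq:omega_shifts}; for ``only if'', note $\omega_0(\braket{w}{w})=\omega_\ell(w)^*\omega_\ell(w)\in\omega_\ell(\Hilm_\ell)^*\,\Hilm_{\ge\ell+1}B\subseteq\Hilm_{\ge1}B$ by~\eqref{eq:omega_shifts}, so $\omega_0(\braket{w}{w})B\subseteq\Hilm_{\ge1}B$.

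The implication \ref{kernel_covariance_1}$\Rightarrow$\ref{kernel_covariance_2} then follows at once: if $\omega(X)\omega_\ell(\Hilm_\ell)=0$ then for $\ell\ge N$ we get $T^{(\ell)}(\Hilm_\ell)\subseteq\ker\omega_\ell=\Hilm_\ell\cdot I$ by Lemma~\ref{lem:faithful_A_Hilm_Comp}, and for $\ell<N$ we get $\omega_\ell(T^{(\ell)}(z))=-R_\ell(z)\in\Hilm_{\ge\ell+1}B$, hence $T^{(\ell)}(\Hilm_\ell)\subseteq\Hilm_\ell\cdot J$. For the converse I would first reduce to the case $X\ge0$: since $x_i^*x_j\in\Comp(\Hilm_{\max\{i,j\}})$ by~\eqref{eq:multiply_Theta_nn}, the element $X^*X$ again has the required form with the same $N$; moreover its $\ell$-th diagonal Fock block is $(T^{(\ell)})^*T^{(\ell)}$, and the inclusion $(T^{(\ell)})^*T^{(\ell)}(\Hilm_\ell)\subseteq\Hilm_\ell\cdot I$ is equivalent to $T^{(\ell)}(\Hilm_\ell)\subseteq\Hilm_\ell\cdot I$ — and likewise with $J$ in place of $I$ — because $\braket{T^{(\ell)}z}{T^{(\ell)}z}=\braket{z}{(T^{(\ell)})^*T^{(\ell)}z}$ and $\Hilm_\ell\cdot I=\{w\in\Hilm_\ell:\braket{w}{w}\in I\}$. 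So neither \ref{kernel_covariance_1} nor \ref{kernel_covariance_2} changes when $X$ is replaced by $X^*X$.

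Assuming now $X\ge0$ and~\ref{kernel_covariance_2}, I would prove $\omega(X)\omega_\ell(\Hilm_\ell)=0$ for all $\ell$ by \emph{downward} induction on $\ell$. For $\ell\ge N$ this is immediate, since $T^{(\ell)}(\Hilm_\ell)\subseteq\Hilm_\ell\cdot I=\ker\omega_\ell$. For $\ell<N$, the inductive hypothesis gives $\omega(X)\omega_m(\Hilm_m)=0$ for all $m>\ell$, hence $\omega(X)\cdot\Hilm_{\ge\ell+1}B=0$; on the other hand $\omega(X)\omega_\ell(z)=\omega_\ell(T^{(\ell)}(z))+R_\ell(z)$ lies in $\Hilm_{\ge\ell+1}B$, since $T^{(\ell)}(z)\in\Hilm_\ell\cdot J$; therefore $\omega(X)^2\omega_\ell(z)=0$, and as $\omega(X)\ge0$ this gives $\norm{\omega(X)\omega_\ell(z)}^2=\norm{\omega_\ell(z)^*\omega(X)^2\omega_\ell(z)}=0$. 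I expect this direction — \ref{kernel_covariance_2}$\Rightarrow$\ref{kernel_covariance_1} for $\ell<N$ — to be the main obstacle: the off-diagonal tail $R_\ell(z)$ need not vanish term by term, so~\ref{kernel_covariance_2} only yields $\omega(X)\omega_\ell(z)\in\Hilm_{\ge\ell+1}B$ directly, and it is precisely the passage to a positive $X$ together with the downward induction (making $\omega(X)$ already annihilate $\Hilm_{\ge\ell+1}B$) that upgrades this to vanishing. The remaining ingredients — the rank-one identity for $\Theta_{i,i}(x_i)\omega_\ell(z)$, the identification $\Hilm_\ell\cdot I=\{w:\braket{w}{w}\in I\}$ (and the same for $J$), and the fact that $\omega_\ell(w)=\lim_\lambda\omega_\ell(w)\omega_0(u_\lambda)$ for an approximate unit $(u_\lambda)$ of $A$ — are routine.
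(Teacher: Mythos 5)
Your proof is correct and follows essentially the same route as the paper: test $\omega(X)$ against $\omega_\ell(\Hilm_\ell)$, use the characterisation of $\Hilm_\ell\cdot J$ via $\Hilm_{\ge\ell+1}B$ (the paper's Lemma~\ref{lem:covariant_on_Comp}) together with Lemma~\ref{lem:faithful_A_Hilm_Comp}, and reduce to a self-adjoint element. The only cosmetic difference is the endgame: the paper splits $X$ into self-adjoint parts and deduces $\omega(X)^{N+1}=0$ from $\omega(X)\Hilm_{\ge\ell}B\subseteq\Hilm_{\ge\ell+1}B$, whereas you pass to $X^*X$ and run a downward induction --- the same mechanism packaged differently.
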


\begin{proof}
  We prove
  \ref{kernel_covariance_1}\(\Rightarrow\)\ref{kernel_covariance_2}
  and assume \(\omega(X)=0\).
  Let \(\ell\in\N\)
  and \(y\in\Hilm_\ell\).  Assume first that \(\ell\ge N\).  Then
  \[
  0 = \omega(X)\omega_\ell(y)
  = \omega_\ell\left(\sum_{i=0}^N \vartheta_i^\ell(x_i) y \right)
  \]
  by~\eqref{eq:Theta_multiplicative_0}.  This implies
  \(\sum_{i=0}^N \vartheta_i^\ell(x_i) y\in \Hilm_\ell\cdot I\)
  by Lemma~\ref{lem:faithful_A_Hilm_Comp} as asserted
  in~\ref{kernel_covariance_2}.  Now let \(\ell<N\).
  The same computation as above shows that
  \[
  \omega_\ell\left(\sum_{i=0}^\ell \vartheta_i^\ell(x_i) y \right)
  + \sum_{i=\ell+1}^N \omega(x_i) \omega_\ell(y) = 0.
  \]
  Since \(\omega(x_i) B \subseteq \Hilm_{\ge \ell+1} B\)
  for \(i\ge\ell+1\), this implies
  \[
  \omega_\ell\left(\sum_{i=0}^\ell \vartheta_i^\ell(x_i) y \right) B
  \subseteq \Hilm_{\ge \ell+1} B.
  \]
  Hence
  \(\sum_{i=0}^\ell \vartheta_i^\ell(x_i) y \in
  \Comp(\Hilm_0,\Hilm_\ell\cdot J) = \Hilm_\ell\cdot J\)
  by Lemma~\ref{lem:covariant_on_Comp}.  Since \(y\in\Hilm_\ell\)
  is arbitrary, this implies
  \(\sum_{i=0}^\ell \vartheta_i^\ell(x_i)\in
  \Bound(\Hilm_\ell,\Hilm_\ell\cdot J)\).
  This finishes the proof that
  \ref{kernel_covariance_1}\(\Longrightarrow\)\ref{kernel_covariance_2}.

  Now we prove, conversely, that~\ref{kernel_covariance_2}
  implies~\ref{kernel_covariance_1}.  If~\(X\)
  satisfies the conditions in~\ref{kernel_covariance_2}, then so
  does~\(X^*\).
  Hence we may replace~\(X\)
  by the two self-adjoint elements \(X+X^*\)
  and \(\ima^{-1}(X-X^*)\).
  So we may assume without loss of generality that~\(X\)
  is self-adjoint.  The assumption in~\ref{kernel_covariance_2} for
  \(\ell\ge N\)
  and Lemma~\ref{lem:faithful_A_Hilm_Comp} imply
  \(\omega(X) \omega_\ell(y)=0\)
  for all \(y\in\Hilm_\ell\),
  \(\ell\ge N\).
  Thus \(\omega(X)\)
  vanishes on~\(\Hilm_{\ge N}B=0\).
  Now let \(0\le\ell< N\)
  and \(y\in\Hilm_\ell\).
  Then \(\sum_{i=0}^\ell \vartheta_i^\ell(x_i) y \in \Hilm_\ell J\)
  by assumption.  Hence
  \begin{multline*}
    \omega(X) \omega_\ell(y) B
    = \sum_{i=0}^\ell \omega_\ell(\vartheta_i^\ell(x_i) y) B
    + \sum_{i=\ell+1}^N \omega(x_i) \omega_\ell(y) B
    \subseteq \omega_\ell(\Hilm_\ell J) B + \Hilm_{\ge \ell+1} B
    \\\subseteq \omega_\ell(\Hilm_\ell) \Hilm_{\ge1} B + \Hilm_{\ge \ell+1} B
    \subseteq \Hilm_{\ge \ell+1} B.
  \end{multline*}
  This implies
  \(\omega(X)\Hilm_{\ge \ell} B \subseteq \Hilm_{\ge \ell+1} B\)
  because
  \(\omega(X)\Hilm_{\ge \ell+1} B \subseteq \Hilm_{\ge \ell+1} B\)
  for any \(X\in\Toep_0\)
  by~\eqref{eq:Theta_shifts}.  Thus
  \(\omega(X)^N B\subseteq \Hilm_{\ge N} B\).
  Hence \(\omega(X)^{N+1}=0\).
  Then \(\omega(X)=0\)
  because~\(X\)
  is self-adjoint.  This finishes the proof
  that~\ref{kernel_covariance_2} implies~\ref{kernel_covariance_1}.
\end{proof}

\begin{proof}[Proof of Theorem~\textup{\ref{the:ideal_Toeplitz}}]
  Both \(H\)
  and \(V(J)\cap L(I)\)
  are gauge-invariant ideals in~\(\Toep\).
  Hence they are equal if and only if their intersections
  with~\(\Toep_0\)
  are equal.  And by Theorem~\ref{the:Toeplitz_gauge_fixed}, these
  intersections are equal if and only if the intersections
  with~\(\Toep_{0,N}\)
  are equal for all \(N\in\N\).
  Since these intersections are ideals, it suffices to prove that they
  have the same positive elements.  So let \(y\in\Toep_{0,N}\)
  be a positive element.  Describe~\(y\)
  by a block diagonal matrix on~\(\Hilm[F]\)
  with entries \(y_\ell \in \Bound(\Hilm_\ell)\)
  for \(\ell\in\N\).
  Here \(y_\ell = \sum_{i=0}^\ell \vartheta_i^\ell(x_i)\)
  in the notation of Lemma~\ref{lem:kernel_covariance}.  So
  Lemma~\ref{lem:kernel_covariance} says that \(y\in H\)
  if and only if \(y_\ell\in \Bound(\Hilm_\ell,\Hilm_\ell J)\)
  for \(0\le \ell < N\)
  and \(y_\ell\in \Bound(\Hilm_\ell,\Hilm_\ell I)\)
  for \(\ell \ge N\).
  Since \(I\subseteq J\),
  this implies \(y_\ell\in \Bound(\Hilm_\ell,\Hilm_\ell J)\)
  for all \(\ell\in\N\).
  And this is equivalent to
  \(y\in \Toep\cap \Bound(\Hilm[F],\Hilm[F] J) = V(J)\).
  Let~\(\bar{y}_\ell\)
  be the operator on \(\Hilm_\ell/\Hilm_\ell I\)
  induced by~\(y_\ell\).
  We have \(y_\ell\in \Bound(\Hilm_\ell,\Hilm_\ell I)\)
  if and only if \(\bar{y}_\ell = 0\).
  The condition \(y\in L(J)\)
  is equivalent to \(\lim_{\ell\to\infty} \norm{\bar{y}_\ell} = 0\).
  This clearly follows if \(\bar{y}_\ell = 0\)
  for \(\ell\ge N\).
  We claim the converse implication.  Since~\(y\)
  is positive, we may rewrite
  \(\lim_{\ell\to\infty} \norm{\bar{y}_\ell} = 0\)
  as follows: for each \(\varepsilon>0\),
  there is \(M\in\N\)
  such that \((\bar{y}_\ell-\varepsilon)_+ = 0\)
  for all \(\ell\ge M\);
  here \((\bar{y}_\ell-\varepsilon)_+\)
  means the positive part of \(\bar{y}_\ell-\varepsilon\).
  We may choose \(M\ge N\).
  Then \(y\in\Toep_{0,M}\),
  and Lemma~\ref{lem:kernel_covariance} and the conditions
  \((\bar{y}_\ell-\varepsilon)_+ = 0\)
  for \(\ell\ge M\)
  and \(y\in V(J)\)
  imply \((y-\varepsilon)_+ \in H\).
  Since \((y-\varepsilon)_+ \in \Toep_{0,N}\),
  Lemma~\ref{lem:kernel_covariance} implies
  \((\bar{y}_\ell-\varepsilon)_+ = 0\)
  already for \(\ell\ge N\).
  Since this holds for all \(\varepsilon>0\),
  we get \(\bar{y}_\ell=0\)
  for all \(\ell\ge N\).
\end{proof}

By Theorem~\ref{the:ideal_Toeplitz},
the lattice of gauge-invariant ideals in~\(\Toep\)
is isomorphic to the lattice of pairs of ideals \((I,J)\)
in~\(A\)
that occur as the kernel and covariance ideal for a gauge-invariant
ideal in~\(\Toep\).
If \((I,J)\)
comes from a gauge-invariant ideal~\(H\),
then \(H=V(J)\cap L(I)\)
by Theorem~\ref{the:ideal_Toeplitz}.  So the question is when
\(A\cap V(J)\cap L(I) = I\)
holds and the covariance ideal of \(V(J)\cap L(I)\idealin \Toep\)
is~\(J\).

We already know that~\(I\)
must be invariant and that any invariant ideal may occur.  And
\(I\subseteq J\)
is trivial.  Given an invariant ideal~\(I\),
we may form the quotient partial product system \(\Hilm_n/\Hilm_n I\).
Its Toeplitz \(\Cst\)\nb-algebra
is isomorphic to a quotient of~\(\Toep\)
by a gauge-invariant ideal by
Proposition~\ref{pro:quotient_by_invariant_ideal}.  Its kernel and
covariance ideal are \(I,I\)
by Theorem~\ref{the:Toeplitz_gauge_fixed}.  So
\[
\Toep(\Hilm|_{A/I}) \cong \Toep(\Hilm)\bigm/ (V(I) \cap L(I))
= \Toep(\Hilm)/ V(I)
\]
by Theorem~\ref{the:ideal_Toeplitz}.  We may replace the original
partial product system by \(\Hilm|_{A/I}\).
This reduces our problem to the case \(I=0\).

\begin{definition}
  \label{def:covariance_algebra}
  Let \(\Hilm = (A,\Hilm_n,\mu_{n,m})_{n,m\in\N}\)
  be a partial product system and let \(J\idealin A\) be an ideal.
  Let
  \[
  \CP(\Hilm,J) \defeq \Toep/(V(J) \cap L(0)).
  \]
  This quotient inherits a representation of~\(\Hilm\)
  and a \(\T\)\nb-action
  from~\(\Toep\)
  because \(V(J) \cap L(0)\)
  is a \(\T\)\nb-invariant
  ideal in~\(\Toep\).
  We call \(\CP(\Hilm,J)\)
  the \emph{\(J\)\nb-covariance
    algebra} if the canonical representation of~\(\Hilm\)
  in~\(\CP(\Hilm,J)\) has covariance ideal~\(J\).
  Let \(K\defeq \bigcap_{n=1}^\infty \ker \vartheta_0^n\).
  We call
  \[
  \CP_\mathrm{Pimsner}(\Hilm) \defeq \CP(\Hilm,A),\qquad
  \CP_\mathrm{Katsura}(\Hilm) \defeq \CP(\Hilm,K^\bot)
  \]
  the \emph{Pimsner algebra} and the \emph{Katsura algebra}
  of~\(\Hilm\), respectively.
\end{definition}

The name ``covariance algebra for~\(J\)''
is justified by the universal property in
Proposition~\ref{pro:CP_universal} below.
The Pimsner algebra is the quotient \(\Toep/L(0)\)
because \(V(A) = \Toep\).
Here \(L(0)\)
is defined as the norm-closure of the finite block matrices
in~\(\Toep\).
This is exactly how Pimsner defines his \(\Cst\)\nb-algebra for a
\(\Cst\)\nb-correspondence
in~\cite{Pimsner:Generalizing_Cuntz-Krieger}.
By Theorem~\ref{the:ideal_Toeplitz}, the Pimsner algebra is the
smallest quotient of~\(\Toep\)
that is defined by a covariance condition: any gauge-invariant
quotient that is
strictly smaller is of the form \(\Toep/(V(J)\cap L(I))\)
with a non-zero invariant ideal~\(I\).

\begin{proposition}
  \label{pro:CP_universal}
  Let \(J\idealin A\)
  be an ideal such that the canonical representation of~\(\Hilm\)
  in~\(\CP(\Hilm,J)\)
  has covariance ideal~\(J\).
  Then \Star{}homomorphisms \(\CP(\Hilm,J) \to B\)
  for a \(\Cst\)\nb-algebra~\(B\)
  are naturally in bijection with representations of the partial
  product system~\(\Hilm\)
  in~\(B\)
  that are covariant on~\(J\).
\end{proposition}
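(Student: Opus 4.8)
The plan is to reduce everything to the universal property of~\(\Toep\) through the identity \(\CP(\Hilm,J) = \Toep/(V(J)\cap L(0))\).  By the universal property of~\(\Toep\) (Theorem~\ref{the:Toeplitz_through_Fock}), a \Star{}homomorphism \(\varrho\colon\Toep\to B\) is the same datum as a representation \((\omega_n)_{n\in\N}\) of~\(\Hilm\) in~\(B\), via \(\omega_n = \varrho\circ\bar\omega_n\), and this correspondence is manifestly natural in~\(B\).  A \Star{}homomorphism \(\CP(\Hilm,J)\to B\) is exactly a~\(\varrho\) that vanishes on \(V(J)\cap L(0)\).  So the whole proposition reduces to the equivalence: for the representation \((\omega_n)_{n\in\N}\) attached to~\(\varrho\), one has \(V(J)\cap L(0)\subseteq\ker\varrho\) if and only if \((\omega_n)_{n\in\N}\) is covariant on~\(J\).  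Granting this, one checks that the resulting bijection sends \(\bar\varrho\colon\CP(\Hilm,J)\to B\) to the representation \(\bar\varrho\circ\omega^{\CP}_n\), where \((\omega^{\CP}_n)_{n\in\N}\) is the canonical representation in~\(\CP(\Hilm,J)\), and that this is inverse to factoring a covariant representation through the quotient map \(\Toep\prto\CP(\Hilm,J)\); both facts follow from the uniqueness clause in the universal property of~\(\Toep\).

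For the ``only if'' direction, assume \(V(J)\cap L(0)\subseteq\ker\varrho\), so that \(\varrho = \bar\varrho\circ q\) for the quotient map \(q\colon\Toep\prto\CP(\Hilm,J)\).  Then \(\omega_n = \bar\varrho\circ\omega^{\CP}_n\), and applying the \Star{}homomorphism~\(\bar\varrho\) to the containment \(\omega^{\CP}_0(a)\cdot\CP(\Hilm,J)\subseteq\Hilm_{\ge1}\CP(\Hilm,J)\) witnessing \(a\in J(\omega^{\CP}_n)\) shows that \(a\in J(\omega_n)\); the one point to watch is that \(\omega_0(a)\) lies in \(\bar\varrho(\CP(\Hilm,J))\), which lets one pass from \(\omega_0(a)\cdot\bar\varrho(\CP(\Hilm,J))\) to \(\omega_0(a)\cdot B\) by multiplying with an approximate unit of~\(\bar\varrho(\CP(\Hilm,J))\).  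Hence \(J = J(\omega^{\CP}_n)\subseteq J(\omega_n)\), where the first equality is the standing hypothesis of the proposition, so \((\omega_n)_{n\in\N}\) is covariant on~\(J\).

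For the converse, assume \(J\subseteq J(\omega_n)\eqqcolon J_0\) and put \(I\defeq\ker\omega_0\).  Since \(V(J)\cap L(0)\) is a gauge-invariant ideal of~\(\Toep\), it is generated by its degree-zero part \((V(J)\cap L(0))\cap\Toep_0\), and, being an ideal in the inductive limit \(\Toep_0 = \varinjlim_N\Toep_{0,N}\) (Theorem~\ref{the:Toeplitz_gauge_fixed}), this degree-zero part is the closure of \(\bigcup_N\bigl((V(J)\cap L(0))\cap\Toep_{0,N}\bigr)\).  So it is enough to show that~\(\varrho\) kills every positive \(y\in(V(J)\cap L(0))\cap\Toep_{0,N}\).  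Writing~\(y\) via Theorem~\ref{the:Toeplitz_gauge_fixed} and describing it in the Fock representation by its block-diagonal entries \(y_\ell = \sum_{i=0}^{\min\{\ell,N\}}\vartheta_i^\ell(x_i)\in\Bound(\Hilm_\ell)\), membership in \(V(J)\) gives \(y_\ell\in\Bound(\Hilm_\ell,\Hilm_\ell J)\subseteq\Bound(\Hilm_\ell,\Hilm_\ell J_0)\) for all~\(\ell\), while membership in \(L(0)\) gives \(\norm{y_\ell}\to0\).  For \(\varepsilon>0\) pick \(M\ge N\) with \(\norm{y_\ell}\le\varepsilon\) for \(\ell\ge M\); then \((y-\varepsilon)_+\in\Toep_{0,M}\) has vanishing block entries for \(\ell\ge M\), and for \(\ell<M\) its block entries lie in \(\Bound(\Hilm_\ell,\Hilm_\ell J_0)\) because the latter is a closed two-sided ideal of~\(\Bound(\Hilm_\ell)\), hence stable under continuous functional calculus by functions vanishing at~\(0\).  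Lemma~\ref{lem:kernel_covariance} then yields \(\varrho((y-\varepsilon)_+) = 0\), and letting \(\varepsilon\to0\) gives \(\varrho(y) = 0\).  Since positive elements span the ideal, \(\varrho\) kills \((V(J)\cap L(0))\cap\Toep_{0,N}\) for every~\(N\), hence kills \(V(J)\cap L(0)\).

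The main obstacle is this converse direction.  Because~\(\varrho\) need not be gauge-equivariant, one cannot argue directly at the level of~\(\Toep_0\); the argument must be threaded through the inductive system \((\Toep_{0,N})_{N\in\N}\), and the cut-off \((y-\varepsilon)_+\) is precisely what reconciles the ``vanishing modulo \(\Hilm_\ell J\)'' coming from \(V(J)\) with the ``vanishing modulo \(\Hilm_\ell I\)'' demanded for large~\(\ell\) by Lemma~\ref{lem:kernel_covariance} --- the same device already used in the proof of Theorem~\ref{the:ideal_Toeplitz}.
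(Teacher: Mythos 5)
Your proof is correct and follows the paper's route: both arguments reduce to the universal property of \(\Toep\) via \(\CP(\Hilm,J)=\Toep/(V(J)\cap L(0))\) and then identify ``\(\varrho\) kills \(V(J)\cap L(0)\)'' with ``\((\omega_n)_{n\in\N}\) is covariant on~\(J\)''. The only real difference is that where the paper simply cites Theorem~\ref{the:ideal_Toeplitz} for the equality \(\ker\omega = V(J(\omega_n))\cap L(I)\), you re-derive the one inclusion actually needed, \(V(J)\cap L(0)\subseteq\ker\varrho\), directly from Lemma~\ref{lem:kernel_covariance} together with the \(\varepsilon\)\nb-cutoff from the proof of Theorem~\ref{the:ideal_Toeplitz} --- which is in fact the more careful course, since \(\ker\varrho\) need not be gauge-invariant for a general representation and only that inclusion (not the cited equality) is valid and required.
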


\begin{proof}
  Representations of~\(\CP(\Hilm,J)\)
  are in bijection with representations of~\(\Toep\)
  that kill \(V(J)\cap L(0) \idealin \Toep\).
  The universal property of the Toeplitz \(\Cst\)\nb-algebra gives a
  bijection between representations of~\(\Toep\)
  and representations of~\(\Hilm\)
  in~\(B\).
  Let \((\omega_n)_{n\in\N}\)
  be a representation of~\(\Hilm\)
  in~\(B\)
  and let \(\omega\colon \Toep\to B\)
  be the induced
  \Star{}homomorphism.
  Let \(I = \ker \omega_0 \idealin A\)
  and let~\(J(\omega_n)\)
  be the covariance ideal of~\((\omega_n)_{n\in\N}\).
  Then \(\ker \omega = V(J(\omega_n))\cap L(I)\)
  by Theorem~\ref{the:ideal_Toeplitz}.
  If~\((\omega_n)\)
  is covariant on~\(J\),
  that is, \(J\subseteq J(\omega_n)\),
  then \(V(J)\cap L(0) \subseteq V(J(\omega_n))\cap L(I)\)
  and hence \(\omega|_{V(J)\cap L(0)} = 0\).
  Then~\(\omega\)
  factors through~\(\CP(\Hilm,J)\).
  Conversely, assume that~\(\omega\)
  factors through~\(\CP(\Hilm,J)\).
  The covariance ideal for the representation of~\(\Hilm\)
  in~\(\CP(\Hilm,J)\)
  is~\(J\)
  by assumption.
  So~\(\omega\)
  is covariant on~\(J\).
\end{proof}

\begin{theorem}
  \label{the:covariance_ideal_not_too_large}
  The canonical \Star{}homomorphism \(A\to\CP(\Hilm,J)\)
  is faithful if and only if \(J\subseteq K^\bot\) for the ideal
  \[
  K \defeq \bigcap_{\ell=1}^\infty  \ker(\vartheta_0^\ell\colon A\to\Bound(\Hilm_\ell)).
  \]
\end{theorem}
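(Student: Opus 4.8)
The plan is to rephrase faithfulness of the canonical map $A\to\CP(\Hilm,J)=\Toep/(V(J)\cap L(0))$ as the statement that the ideal $A\cap V(J)\cap L(0)\idealin A$ is zero, and then to identify this ideal concretely. By Theorem~\ref{the:Toeplitz_Fock} we regard $\Toep$ as a $\Cst$-subalgebra of $\Bound(\Hilm[F])$; in this picture $a\in A$ is the block-diagonal operator whose $\ell$th diagonal entry is $\vartheta_0^\ell(a)$, the left action on~$\Hilm_\ell$. Unravelling the definitions on such a block-diagonal operator, $a\in V(J)$ holds iff $\vartheta_0^\ell(a)\Hilm_\ell\subseteq\Hilm_\ell\cdot J$ for all $\ell\in\N$ (the case $\ell=0$ saying $a\in J$), and $a\in L(0)$ holds iff $\norm{\vartheta_0^\ell(a)}\to0$ as $\ell\to\infty$.

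For necessity, observe that every $a$ in the ideal $K\cap J$ meets all of these conditions trivially, since $\vartheta_0^\ell(a)=0$ for $\ell\ge1$; concretely $a=\bar\omega_0(a)$ is a finite block matrix supported on the $\Hilm_0$-block and lies in $V(J)\cap L(0)$. Hence $A\cap V(J)\cap L(0)\supseteq K\cap J$, so faithfulness forces $K\cap J=\{0\}$. As $K$ and $J$ are closed ideals in the $\Cst$-algebra~$A$, we have $K\cap J=\overline{KJ}$, so $KJ=\{0\}$, which says precisely $J\subseteq K^\bot$.

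For sufficiency, assume $J\subseteq K^\bot$, equivalently $K\cap J=\{0\}$, and let $a$ lie in the ideal $A\cap V(J)\cap L(0)$. Since this set is an ideal in~$A$, it suffices to show it contains no nonzero positive element, so we assume $a\ge0$. Fix $\varepsilon>0$ and put $c\defeq(a-\varepsilon)_+\in\Cst(a)\subseteq V(J)$. Then $c\in J$ and $\vartheta_0^\ell(c)\Hilm_\ell\subseteq\Hilm_\ell\cdot J$ for all~$\ell$, while $\norm{\vartheta_0^\ell(a)}\to0$ yields $N\in\N$ with $\vartheta_0^\ell(c)=(\vartheta_0^\ell(a)-\varepsilon)_+=0$ for all $\ell\ge N$. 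The key step is a downward induction on $\ell$ proving $\vartheta_0^\ell(c)=0$ for all $\ell\ge1$: assuming $\vartheta_0^{\ell'}(c)=0$ for all $\ell'>\ell$, then for every $m\ge1$ the isometric bimodule map $\mu_{\ell,m}\colon\Hilm_\ell\otimes_A\Hilm_m\injto\Hilm_{\ell+m}$ together with the relation $\vartheta_0^{\ell+m}(c)=0$ gives $\mu_{\ell,m}\bigl((\vartheta_0^\ell(c)x)\otimes y\bigr)=0$, hence $(\vartheta_0^\ell(c)x)\otimes y=0$ in $\Hilm_\ell\otimes_A\Hilm_m$ for all $x\in\Hilm_\ell$, $y\in\Hilm_m$; by the inner-product formula for the tensor product this is equivalent to $\vartheta_0^\ell(c)\Hilm_\ell\subseteq\Hilm_\ell\cdot\ker\vartheta_0^m$. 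Letting~$m$ range over $\N_{\ge1}$ yields $\vartheta_0^\ell(c)\Hilm_\ell\subseteq\Hilm_\ell\cdot K$, and combining with $\vartheta_0^\ell(c)\Hilm_\ell\subseteq\Hilm_\ell\cdot J$ gives $\vartheta_0^\ell(c)\Hilm_\ell\subseteq\Hilm_\ell\cdot(K\cap J)=\{0\}$, so $\vartheta_0^\ell(c)=0$. The induction starts at $\ell=N-1$ and descends to $\ell=1$. Thus $c\in K$, and with $c\in J$ and $K\cap J=\{0\}$ we obtain $(a-\varepsilon)_+=c=0$ for all $\varepsilon>0$; since $a\ge0$ this forces $a=0$.

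The main obstacle is this sufficiency direction, and inside it the passage from the vanishing of $\vartheta_0^{\ell+m}(c)$ on all of $\Hilm_{\ell+m}$ to a constraint on $\vartheta_0^\ell(c)$. The only available handle is that $\mu_{\ell,m}$ is an \emph{isometric} embedding of $\Hilm_\ell\otimes_A\Hilm_m$, which lets one rewrite the hypothesis as $\vartheta_0^\ell(c)\otimes 1_{\Hilm_m}=0$ on the tensor product, and this in turn is exactly the containment $\vartheta_0^\ell(c)\Hilm_\ell\subseteq\Hilm_\ell\cdot\ker\vartheta_0^m$. The bookkeeping that makes everything close up — that $K$ is precisely $\bigcap_{m\ge1}\ker\vartheta_0^m$, so that the norm-decay hypothesis (which by itself only controls $\vartheta_0^\ell(c)$ for large~$\ell$) can be propagated down to all $\ell\ge1$ once it is paired with the multiplication maps of the partial product system — is the part that requires care.
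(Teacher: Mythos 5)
Your proof is correct and follows essentially the same route as the paper's: both directions hinge on showing $A\cap V(J)\cap L(0)=K\cap J$, with the sufficiency step reducing via $(a-\varepsilon)_+$ to an element with eventually vanishing diagonal blocks and then propagating the vanishing downward using the multiplication maps together with $\Hilm_\ell K\cap\Hilm_\ell J=\Hilm_\ell(K\cap J)=0$. The only cosmetic difference is that you run a downward induction at the level of $\Hilm_\ell\otimes_A\Hilm_m$ via the isometry $\mu_{\ell,m}$, whereas the paper argues by contradiction from a maximal nonvanishing block using products $\xi_1^*\cdot x\cdot\xi_2\cdot\eta$ inside $\Toep$ — the same computation in different clothing.
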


\begin{proof}
  Assume first that~\(J\)
  is not contained in~\(K^\bot\).
  Then \(J\cap K\neq0\)
  and we may pick a non-zero element \(a\in J\cap K\).
  In the Fock representation, \(a\in A\)
  acts by the block diagonal operator with entries
  \(\vartheta_0^\ell(a)\in\Bound(\Hilm_\ell)\)
  for \(\ell\in\N\).
  By assumption, this belongs to~\(J\)
  for \(\ell=0\)
  and vanishes for \(\ell>0\).
  So \(a\in A\cap V(J) \cap L(0)\)
  becomes~\(0\)
  in~\(\CP(\Hilm,J)\).
  Conversely, assume that~\(J\)
  is contained in~\(K^\bot\).
  We must show that the map \(A \to \CP(\Hilm,J)\)
  is faithful.
  We prove a slightly more general claim, which will be needed
  below.  Namely, we treat \(x\in \Toep_{0,N}\)
  instead of \(a\in A\).
  We may write \(x= \sum_{i=0}^N x_i\)
  with \(x_i\in\Comp(\Hilm_i)\).
  In the Fock representation, this acts by the diagonal operator with
  entries
  \[
  y_\ell \defeq \sum_{i=0}^{\min\{\ell,N\}} \vartheta_i^\ell(x_i)
  \]
  for \(i\in\N\).
  We assume \(x\in V(J)\),
  that is, \(y_\ell\in \Bound(\Hilm_\ell,\Hilm_\ell J)\)
  for all \(\ell\in\N\).

  \begin{claim}
    \label{claim:511}
    An element \(x\in\Toep_{0,N} \cap V(J)\)
    satisfies \(x\in L(0)\)
    if and only if \(y_\ell=0\) for all \(\ell\ge N\).
  \end{claim}

  If \(N=0\),
  then \(\Toep_{0,0}=A\)
  and the claim says that \(x=0\)
  if \(x\in A \cap V(J) \cap L(0)\),
  which is what we have to prove.  So the following proof of the claim
  will also finish the proof of the proposition.
  The claim does not follow from Lemma~\ref{lem:kernel_covariance}
  because we do not yet know the vanishing and covariance ideals of
  the homomorphism \(A\to \CP(\Hilm,J)\).
  
  The proof of Theorem~\ref{the:ideal_Toeplitz} shows that elements
  for which there is \(M\in\N\)
  with \(y_\ell=0\)
  for all \(\ell>M\)
  are dense in \(\Toep_{0,N} \cap V(J) \cap L(0)\).
  To prove the claim, we must show that if there is \(M\in\N\)
  with \(y_\ell=0\)
  for all \(\ell>M\),
  then already \(y_\ell=0\)
  for all \(\ell\ge N\).
  Let~\(M\)
  be maximal with \(y_M\neq0\).
  We assume \(M>N\)
  in order to get to a contradiction.  Let \(\xi_1,\xi_2\in \Hilm_M\)
  and \(\eta\in\Hilm_i\)
  for some \(i>0\).
  Then \(\xi_2\cdot\eta\in\Hilm_{M+i}\)
  and so \(x\cdot \xi_2\cdot \eta = y_{M+i}(\xi_2\cdot\eta)=0\)
  because \(y_\ell=0\)
  for \(\ell>M\).
  (Here the product \(x\cdot \xi_2\cdot \eta\)
  takes place in~\(\Toep\),
  whereas \(y_{M+i}(\xi_2\cdot\eta)=0\)
  is an element of \(\Hilm_{M+i}\subseteq \Toep\).)
  We may also write \(x\cdot \xi_2\cdot\eta = y_M(\xi_2)\cdot\eta\).
  And
  \[
  0 = \xi_1^*\cdot x\cdot \xi_2\cdot\eta
  = \braket{\xi_1}{y_M(\xi_2)} \cdot \eta.
  \]
  The right hand side is the product of
  \(\braket{\xi_1}{y_M(\xi_2)} \in A\)
  with \(\eta\in\Hilm_i\).
  Since this vanishes for all \(\eta\in\Hilm_i\)
  for all \(i>0\),
  we get \(\braket{\xi_1}{y_M(\xi_2)} \in K\).
  Hence \(y_M(\xi_2) \in \Hilm_M\cdot K\).
  We also assumed \(y_M(\xi_2) \in \Hilm_M\cdot J\).
  So \(y_M(\xi_2) \in \Hilm_M\cdot (J\cap K)\).
  Hence \(y_M(\xi_2)=0\)
  because we assumed \(J\bot K\).
  Since \(\xi_2\in\Hilm_M\)
  is arbitrary, this implies \(y_M=0\).
  This is the desired contradiction, which proves the claim.
\end{proof}

Theorem~\ref{the:covariance_ideal_not_too_large} shows that
the covariance ideal of the representation of~\(\Hilm\)
in its Katsura algebra is maximal among the covariance ideals of
faithful representations.  In other words, the Katsura algebra is
the smallest quotient of~\(\Toep\)
for which the canonical map \(A\to\Toep\to \CP_\mathrm{Katsura}(\Hilm)\)
is injective.  This is the design principle of Katsura's
construction of a \(\Cst\)\nb-algebra
for a \(\Cst\)\nb-correspondence
in~\cite{Katsura:Cstar_correspondences}.

Product systems are easier than partial product systems because for
them the maps
\(\vartheta_m^n\colon \Comp(\Hilm_m) \to \Comp(\Hilm_n)\)
for \(m\le n\)
satisfy
\(\bar\vartheta_m^n\circ \vartheta_\ell^m = \vartheta_\ell^n\)
for all \(\ell\le m\le n\),
where~\(\bar\vartheta_m^n\)
is the canonical extension of~\(\vartheta_m^n\)
to \(\Bound(\Hilm_m)\).  Hence
\(\bigcap_{\ell=1}^\infty \ker \vartheta_0^\ell = \ker\vartheta_0^1\)
for product systems.

\begin{theorem}
  \label{the:product_system_covariance_ideal}
  Let~\(\Hilm\)
  be a product system and let \(H\idealin \Toep\)
  be a gauge-invariant ideal with \(A\cap H=0\).
  Then its covariance ideal is contained in
  \[
  J_{\max} \defeq (\vartheta_0^1)^{-1}(\Comp(\Hilm_1)) \cap
  (\ker \vartheta_0^1)^\bot.
  \]
  Any ideal \(J\idealin J_{\max}\)
  is the covariance ideal of a unique gauge-invariant ideal
  \(H\idealin \Toep\)
  with \(A\cap H=0\),
  namely, \(H= V(J) \cap L(0)\).
  The quotient \(\Toep/H\)
  is the \(J\)\nb-covariance algebra \(\CP(\Hilm,J)\).
  The Katsura algebra and the Pimsner algebra of~\(\Hilm\)
  are the \(\Cst\)\nb-algebras defined already by Katsura and
  Pimsner in this case.
\end{theorem}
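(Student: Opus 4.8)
Since $\Hilm$ is a product system, it is determined by the single correspondence $X\defeq\Hilm_1$: the multiplication maps identify $\Hilm_n$ with $X^{\otimes n}$ and $\Comp(\Hilm_n)$ with $\Comp(X^{\otimes n})$. By Proposition~\ref{pro:simplifiable_global} every weak representation of~$\Hilm$ is a representation, and such a representation is the same as a Toeplitz representation of~$X$: it is determined by~$\omega_1$, and conversely a Toeplitz representation of~$X$ extends by taking products. Hence the universal property identifies $\Toep=\Toep(\Hilm)$ canonically with the Toeplitz $\Cst$\nb-algebra $\Toep_X$ of~$X$, compatibly with the gauge actions and with the inclusions of~$A$ and of the~$\Comp(\Hilm_n)$. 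Moreover the compatibility $\bar\vartheta_m^n\circ\vartheta_\ell^m=\vartheta_\ell^n$ recalled before the theorem gives $K=\bigcap_{\ell\ge1}\ker\vartheta_0^\ell=\ker\vartheta_0^1=\ker\varphi_X$, so $J_{\max}=\varphi_X^{-1}(\Comp(X))\cap(\ker\varphi_X)^\bot$ is exactly Katsura's ideal $J_X$ for~$X$. I plan to prove the classification inside the present framework and then read off the two named quotients via this isomorphism.

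The crucial claim is: if $H\idealin\Toep$ is gauge-invariant with $A\cap H=0$ and covariance ideal~$J$, then $\vartheta_0^1(J)\subseteq\Comp(\Hilm_1)$. Granting it, we get $J\subseteq J_{\max}$: indeed $J\subseteq K^\bot$ as well, because Theorem~\ref{the:ideal_Toeplitz} (with kernel ideal $A\cap H=0$) gives $H=V(J)\cap L(0)$, hence $\Toep/H=\CP(\Hilm,J)$ with $A\to\CP(\Hilm,J)$ faithful, so Theorem~\ref{the:covariance_ideal_not_too_large} applies. To prove the claim I would argue in the Fock representation. For $a\in J$ one has $\bar\omega_0(a)\in\Hilm_{\ge1}(\Toep/H)$; intersecting with the fixed-point subalgebra and using gauge invariance of~$H$ together with the fact that $\Hilm_{\ge1}\Toep\cap\Toep_0$ is the closure of $\sum_{m\ge1}\Comp(\Hilm_m)$, one finds $b$ in the closure of $\sum_{m\ge1}\Comp(\Hilm_m)$ inside~$\Toep$ with $a-b\in H\subseteq L(0)$. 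In the Fock representation $a$ acts by the block-diagonal operator with $\ell$th entry $\vartheta_0^\ell(a)=\vartheta_0^1(a)\otimes 1_{\Hilm_{\ell-1}}$, while for large~$\ell$ the $\ell$th entry of~$b$ has the form $\sum_{m=1}^\ell z_m\otimes 1_{\Hilm_{\ell-m}}$ with $z_m\in\Comp(\Hilm_m)$; the membership $a-b\in L(0)$ forces the difference of these two entries to tend to~$0$ in norm. A short analysis of these identities, using the tensor-power structure of~$X$, then shows $\vartheta_0^1(a)\in\Comp(\Hilm_1)$. (Equivalently, the claim can be read off from Katsura's classification of the gauge-invariant ideals of $\Toep_X$ with trivial intersection with~$A$ in~\cite{Katsura:Ideal_structure_correspondences}.)

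For the converse, fix an ideal $J\idealin J_{\max}$ and set $H\defeq V(J)\cap L(0)$, which is $\T$\nb-invariant by construction. Since $J\subseteq J_{\max}\subseteq K^\bot$, Theorem~\ref{the:covariance_ideal_not_too_large} gives $A\cap H=0$. As $J\subseteq J_{\max}$ we have $\vartheta_0^1(J)\subseteq\Comp(\Hilm_1)$, so $\bar\Theta_{1,1}(\vartheta_0^1(a))\in\Comp(\Hilm_1)\subseteq\Hilm_{\ge1}(\Toep/H)$ is defined for $a\in J$; in the Fock representation the element $\bar\omega_0(a)-\bar\Theta_{1,1}(\vartheta_0^1(a))$ acts by the operator that equals~$a$ on $\Hilm_0=A$ and vanishes on all $\Hilm_\ell$ with $\ell\ge1$, hence by a finite block matrix, so it lies in $L(0)$; and it lies in $V(J)$ because $a\equiv 0$ modulo~$J$. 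Thus $\bar\omega_0(a)-\bar\Theta_{1,1}(\vartheta_0^1(a))\in V(J)\cap L(0)=H$, so $\bar\omega_0(a)=\bar\Theta_{1,1}(\vartheta_0^1(a))\in\Hilm_{\ge1}(\Toep/H)$ and $J$ is contained in the covariance ideal of the canonical representation in $\Toep/H$. For the reverse inclusion, if $\bar\omega_0(a)\in\Hilm_{\ge1}(\Toep/H)$ then $a-b\in H\subseteq V(J)$ with $b$ in the closure of $\sum_{m\ge1}\Comp(\Hilm_m)$, and the $(0,0)$\nb-entry of $a-b$ in the Fock representation modulo $\Hilm[F]\,J$ is~$a$ modulo~$J$, which must vanish, so $a\in J$. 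Hence the covariance ideal is exactly~$J$, $\CP(\Hilm,J)$ is the $J$\nb-covariance algebra, and Theorem~\ref{the:ideal_Toeplitz} shows $H=V(J)\cap L(0)$ is the unique gauge-invariant ideal with kernel~$0$ and covariance ideal~$J$.

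Finally, the Pimsner algebra $\CP_\mathrm{Pimsner}(\Hilm)=\CP(\Hilm,A)$ is $\Toep/(V(A)\cap L(0))=\Toep/L(0)$ since $V(A)=\Toep$, and under $\Toep\cong\Toep_X$ the ideal $L(0)$ is the closure of the finite block matrices in the Fock representation of~$X$, which is the ideal Pimsner divides out in~\cite{Pimsner:Generalizing_Cuntz-Krieger}. For the Katsura algebra $\CP_\mathrm{Katsura}(\Hilm)=\CP(\Hilm,K^\bot)=\Toep/(V(K^\bot)\cap L(0))$: the ideal $V(K^\bot)\cap L(0)$ satisfies $A\cap(V(K^\bot)\cap L(0))=0$, so by the crucial claim its covariance ideal is contained in~$J_{\max}$, and together with $J_{\max}\subseteq K^\bot$ and Theorem~\ref{the:ideal_Toeplitz} this forces $V(K^\bot)\cap L(0)=V(J_{\max})\cap L(0)$, hence $\CP(\Hilm,K^\bot)=\CP(\Hilm,J_{\max})$; by the previous paragraph this is the universal $\Cst$\nb-algebra for Toeplitz representations of~$X$ satisfying Katsura's covariance relation on $J_X=J_{\max}$, that is, Katsura's $\mathcal{O}_X$ from~\cite{Katsura:Cstar_correspondences}. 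The one genuinely new input is the crucial claim $\vartheta_0^1(J)\subseteq\Comp(\Hilm_1)$: passing from ``$\vartheta_0^1(a)$ bounded and $a$ orthogonal to $\ker\vartheta_0^1$'' to ``$\vartheta_0^1(a)$ compact'' is exactly where the shape of Katsura's ideal $J_X$ comes from, and it is the step that relies on the tensor-power structure $\Hilm_n\cong\Hilm_1^{\otimes n}$, which is unavailable for a general partial product system.
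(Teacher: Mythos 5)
Your overall architecture matches the paper's: reduce everything to Theorems~\ref{the:ideal_Toeplitz} and~\ref{the:covariance_ideal_not_too_large}, isolate the inclusion \(\vartheta_0^1(J)\subseteq\Comp(\Hilm_1)\) as the one new input, and then realise each \(J\idealin J_{\max}\) by checking that \(a-\vartheta_0^1(a)=aP_0\) lies in \(V(J)\cap L(0)\). That last computation, and your treatment of the Pimsner and Katsura quotients, are essentially the paper's own closing arguments and are correct (the paper phrases the converse via the Muhly--Solel relative Cuntz--Pimsner algebra, but the content is the same \(aP_0\) observation).

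The genuine gap is the ``crucial claim'' itself, which you reduce to ``a short analysis of these identities''. That analysis is not short, and your setup does not obviously support it. First, the operators \(z_m\otimes 1_{\Hilm_{\ell-m}}\) are \emph{not} compact on \(\Hilm_\ell\) for \(m<\ell\), so knowing that \(\vartheta_0^1(a)\otimes 1_{\Hilm_{\ell-1}}\) is norm-close to such a sum gives no direct access to compactness of \(\vartheta_0^1(a)\). Second, the map \(T\mapsto T\otimes 1_{\Hilm_1}\) on \(\Bound(\Hilm_{\ell-1})\) is in general neither isometric nor injective; the most one can extract from ``\(T\otimes 1\) is compact'' is \(T\in\Comp(\Hilm_{\ell-1})+\ker\bar\vartheta_{\ell-1}^\ell\), via the approximate-unit/strict-convergence argument, and killing the kernel summand requires knowing \(T\in\Bound(\Hilm_{\ell-1},\Hilm_{\ell-1}J)\) (Lemma~\ref{lem:kernel_covariance}) together with \(J\cap\ker\vartheta_0^1=0\). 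Third, the paper runs this as a \emph{downward} recursion on exact finite sums \(\sum_{i=0}^N x_i\in H\) (dense in the relevant set by the proof of Theorem~\ref{the:ideal_Toeplitz}), anchored at \(y_N=0\) from Claim~\ref{claim:511}: compactness of \(y_\ell-x_\ell=\bar\vartheta_{\ell-1}^\ell(y_{\ell-1})\) at each stage is what licenses the approximate-unit step at the next stage. Your formulation in terms of a single limit element \(b\) whose diagonal entries merely tend to those of \(a\) does not set up this induction. The fallback of citing Katsura's ideal classification for \(\Toep_X\) is legitimate in principle, but then you owe a verification that the second ideal in Katsura's pairs coincides with the covariance ideal as defined here; that dictionary is not automatic and is essentially equivalent to the claim you are trying to avoid proving.
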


\begin{proof}
  Let~\(J\)
  be the covariance ideal of~\(H\).
  Then \(H = V(J) \cap L(0)\)
  by Theorem~\ref{the:ideal_Toeplitz}.
  Theorem~\ref{the:covariance_ideal_not_too_large} implies
  \(J\subseteq(\ker \vartheta_0^1)^\bot\).
  We must prove \(\vartheta_0^1(J)\subseteq \Comp(\Hilm_1)\).

  Let \(N\ge1\)
  and let \(x_i\in \Comp(\Hilm_i)\)
  for \(i=0,\dotsc,N\)
  be such that \(\sum_{i=0}^N x_i \in \Toep_{0,N}\)
  belongs to \(H = V(J) \cap L(0)\).  For \(0\le \ell\le N\), define
  \[
  y_\ell \defeq \sum_{i=0}^\ell \vartheta_i^\ell(x_i) \in \Bound(\Hilm_\ell).
  \]

  \begin{claim}
    \(y_\ell\in \Comp(\Hilm_\ell)\)
    and \(\sum_{i=0}^{\ell-1} x_i + (x_\ell-y_\ell) \in H\)
    for \(1\le \ell\le N\).
  \end{claim}

  \begin{proof}
    We prove this recursively for \(\ell=N,N-1,N-2,\dotsc\).
    Claim~\ref{claim:511} implies \(y_N=0\).
    So the claim for \(\ell=N\)
    is our assumption \(\sum_{i=0}^N x_i \in H\).
    Assume the claim has been shown for some \(\ell>1\).
    We prove the claim for \(\ell-1\).
    Since~\(\Hilm\)
    is a global product system, the unique strictly continuous
    extension~\(\bar\vartheta_{\ell-1}^\ell\)
    of~\(\vartheta_{\ell-1}^\ell\)
    to \(\Bound(\Hilm_m)\)
    satisfies
    \(\bar\vartheta_{\ell-1}^\ell\circ \vartheta_i^{\ell-1} =
    \vartheta_i^\ell\) for \(0\le i \le \ell-1\).  So
    \[
    x_\ell-y_\ell
    = -\sum_{i=0}^{\ell-1} \bar\vartheta_{\ell-1}^\ell(\vartheta_i^{\ell-1}(x_i))
    = -\bar\vartheta_{\ell-1}^\ell(y_{\ell-1}).
    \]
    We first prove \(y_{\ell-1} \in \Comp(\Hilm_{\ell-1})\).
    This is clear if \(\ell=1\)
    because \(y_0 = x_0 \in A = \Comp(\Hilm_0)\).
    So let \(\ell>1\).
    Then
    \(\mu_{\ell-1,1}\colon \Hilm_{\ell-1} \otimes_A \Hilm_1 \to
    \Hilm_\ell\)
    is unitary.  Let \((u_\lambda)_{\lambda\in\Lambda}\)
    be an approximate unit for~\(\Comp(\Hilm_{\ell-1})\).
    Then \(\lim u_\lambda = 1\)
    in the strong topology on~\(\Hilm_{\ell-1}\).
    Since the net~\((u_\lambda)\)
    is self-adjoint and bounded, this implies strong convergence
    \(\lim u_\lambda \otimes_A \id_{\Hilm_1} = 1\)
    on \(\Hilm_{\ell-1} \otimes_A \Hilm_1\).
    This is equivalent to the strong convergence
    \(\lim \bar\vartheta_{\ell-1}^\ell(u_\lambda) = 1\)
    in~\(\Bound(\Hilm_\ell)\).
    Since the net \(\bar\vartheta_{\ell-1}^\ell(u_\lambda)\)
    is self-adjoint and bounded, this is equivalent to strict
    convergence.  The operator
    \(\bar\vartheta_{\ell-1}^\ell(y_{\ell-1})\)
    is compact by the claim for~\(\ell\).
    Hence the strict convergence of
    \(\bar\vartheta_{\ell-1}^\ell(u_\lambda)\)
    implies norm convergence
    \(\lim_\lambda \bar\vartheta_{\ell-1}^\ell(u_\lambda \cdot
    y_{\ell-1})= \bar\vartheta_{\ell-1}^\ell(y_{\ell-1})\).
    So
    \(\bar\vartheta_{\ell-1}^\ell(y_{\ell-1}) \in
    \bar\vartheta_{\ell-1}^\ell\bigl(\Comp(\Hilm_{\ell-1})\bigr)\).
    This is equivalent to
    \(y_{\ell-1} \in \Comp(\Hilm_{\ell-1}) + \ker
    \bar\vartheta_{\ell-1}^\ell\) because
    \[
    \bar\vartheta_{\ell-1}^\ell\bigl(\Comp(\Hilm_{\ell-1})\bigr) \cong
    \bigl(\Comp(\Hilm_{\ell-1}) + \ker \bar\vartheta_{\ell-1}^\ell\bigr)\bigm/\ker
    \bar\vartheta_{\ell-1}^\ell.
    \]
    The homomorphism~\(\bar\vartheta_{\ell-1}^\ell\)
    is injective on the ideal
    \(\Bound(\Hilm_{\ell-1},\Hilm_{\ell-1} J)\)
    by Lemma~\ref{lem:faithful_A_Hilm_Comp}
    and because \(J \cap \ker \vartheta_0^1 = 0\).
    Lemma~\ref{lem:kernel_covariance} implies
    \(y_{\ell-1} \in \Bound(\Hilm_{\ell-1},\Hilm_{\ell-1} J)\).
    This is orthogonal to \(\ker \bar\vartheta_{\ell-1}^\ell\).
    Hence
    \(y_{\ell-1} \in \Comp(\Hilm_{\ell-1}) + \ker
    \bar\vartheta_{\ell-1}^\ell\)
    implies \(y_{\ell-1} \in \Comp(\Hilm_{\ell-1})\).

    Now
    \(\sum_{i=0}^{\ell-2} x_i + (x_{\ell-1}-y_{\ell-1}) \in
    \Toep_{0,\ell-1}\)
    makes sense.  It belongs to~\(V(J)\)
    because \(\sum_{i=0}^{\ell-1} x_i \in V(J)\)
    by the claim for~\(\ell\)
    and \(y_{\ell-1} \in \Bound(\Hilm_{\ell-1},\Hilm_{\ell-1} J)\)
    by Lemma~\ref{lem:kernel_covariance}.  And
    \(\sum_{i=0}^{\ell-2} x_i + (x_{\ell-1}-y_{\ell-1})\)
    belongs to~\(L(0)\)
    because
    \(\sum_{i=0}^{\ell-2} \vartheta_i^{\ell-1}(x_i) +
    (x_{\ell-1}-y_{\ell-1}) = 0\)
    implies
    \(\sum_{i=0}^{\ell-2} \vartheta_i^k(x_i) +
    \vartheta_{\ell-1}^k(x_{\ell-1}-y_{\ell-1}) = 0\)
    for all \(k\ge \ell-1\)
    because
    \(\bar\vartheta_{\ell-1}^k\circ \vartheta_i^{\ell-1} =
    \vartheta_i^k\).  This finishes the proof of the claim.
  \end{proof}

  The proof of Theorem~\ref{the:ideal_Toeplitz} shows that the set of
  elements \(x_0\in A\)
  for which there are \(N\in\N\)
  and \(x_i\in \Comp(\Hilm_i)\)
  for \(i=1,\dotsc,N\)
  with \(\sum_{i=0}^N x_i \in H\)
  is dense in~\(J\).
  The claim for \(\ell=1\)
  says that \(\vartheta_0^1(x_0)\in\Comp(\Hilm_1)\)
  and \(x_0 - \vartheta_0^1(x_0) \in H\)
  for all such \(x_0\in A\).
  Hence \(\vartheta_0^1(J)\subseteq \Comp(\Hilm_1)\).
  This completes the proof that the covariance ideal of~\(H\)
  is contained in~\(J_{\max}\).
  In the Fock representation, \(x - \vartheta_0^1(x)\)
  for \(x\in A\)
  with \(\vartheta_0^1(x)\in \Comp(\Hilm_1)\)
  acts by \(x\cdot P_0\),
  where~\(P_0\)
  is the orthogonal projection onto~\(\Hilm_0\).
  So the claim also shows that \(x P_0 \in H\)
  for any \(x\in J\).
  Conversely, if \(x P_0 \in H\)
  for some \(x\in A\),
  then \(\vartheta_0^1(x)\in \Comp(\Hilm_1)\)
  follows, and so \(x\in J\).
  Thus the covariance ideal~\(J\)
  of~\(H\)
  is the set of all \(x\in A\)
  with \(x P_0 \in H\).
  The standard definition of a relative Cuntz--Pimsner
  in \cite{Muhly-Solel:Tensor}*{Definition~2.18}
  is to take the quotient of~\(\Toep\)
  by the ideal generated by \(J\cdot P_0\)
  for some ideal \(J\subseteq J_{\max}\).
  The argument above shows that this relative Cuntz--Pimsner has the
  covariance ideal~\(J\).
  Hence any \(J\idealin J_{\max}\)
  is a covariance ideal for some gauge-invariant ideal~\(H\)
  with \(H\cap A = 0\),
  and the resulting covariance algebra is the usual relative
  Cuntz--Pimsner algebra.  In particular, we get the
  \(\Cst\)\nb-algebra
  defined by Katsura~\cite{Katsura:Cstar_correspondences} for
  \(J=J_{\max}\).
  We have already observed after
  Definition~\ref{def:covariance_algebra}
  that~\(\CP_\mathrm{Pimsner}(\Hilm)\)
  is the \(\Cst\)\nb-algebra
  associated to the \(\Cst\)\nb-correspondence~\(\Hilm_1\)
  by Pimsner~\cite{Pimsner:Generalizing_Cuntz-Krieger}.
\end{proof}

\begin{remark}
  Theorem~\ref{the:product_system_covariance_ideal} says that the
  covariance ideal of any representation of a product system~\(\Hilm\)
  with \(K=0\)
  is contained in \((\vartheta_0^1)^{-1}(\Comp(\Hilm_1))\).
  This may fail if \(K\neq0\).
  Namely, it can happen that \(\vartheta_0^2(a)=0\)
  for some \(a\in A\)
  for which \(\vartheta_0^1(a)\)
  is not compact.  Then \(a \in L(0)\),
  although \(\vartheta_0^1(a)\)
  is not compact.  The Pimsner algebra in such a case is not a
  relative Cuntz--Pimsner algebra.
\end{remark}

Next we study the restriction of a Fell bundle
\(\mathcal{B} = (B_n)_{n\in\Z}\)
over~\(\Z\).
The multiplication and involution of the Fell bundle give a
\Star{}algebra structure on the direct sum \(\bigoplus_{n\in\Z} B_n\).
This \Star{}algebra has a maximal \(\Cst\)\nb-norm.
Its completion for this norm is the \emph{section \(\Cst\)\nb-algebra}
\(\Cst(\mathcal{B})\)
of the Fell bundle.  It carries a canonical \(\T\)\nb-action
where the subspaces \(B_n\subseteq \Cst(\mathcal{B})\)
for \(n\in\Z\)
are the homogeneous subspaces.  Since the maximal \(\Cst\)\nb-seminorm
on \(\bigoplus_{n\in\Z} B_n\)
is a \(\Cst\)\nb-norm,
the canonical maps \(B_n \to \Cst(\mathcal{B})\)
for \(n\in\Z\)
are all injective, even isometric.  In particular,
\(B_0 \injto \Cst(\mathcal{B})\).

\begin{theorem}
  \label{the:Fell_Cstar_covariance}
  Let~\(\mathcal{B}\)
  be a Fell bundle over~\(\Z\).
  The section \(\Cst\)\nb-algebra
  \(\Cst(\mathcal{B})\)
  is naturally isomorphic to the Katsura algebra of the
  restriction~\(\mathcal{B}|_\N\)
  of~\(\mathcal{B}\)
  to a partial product system over~\(\N\).
  The covariance ideal of the canonical representation
  of~\(\mathcal{B}|_\N\)
  in~\(\Cst(\mathcal{B})\)
  is the closed linear span of
  \(\sum_{n=1}^\infty \BRAKET{B_n}{B_n}\).
\end{theorem}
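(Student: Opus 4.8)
The plan is to realise $\Cst(\mathcal{B})$ as an explicit gauge-invariant quotient of the Toeplitz algebra~$\Toep$ of the partial product system $\mathcal{B}|_\N$ (which is a partial product system by Proposition~\ref{pro:pps_from_Fell}), to read off the kernel and covariance ideal of the associated representation, and then to compare with the Katsura algebra via a gauge-equivariant uniqueness argument.

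First I would use that the canonical isometric inclusions $\omega_n\colon B_n\injto\Cst(\mathcal{B})$ for $n\in\N$ form a representation of $\mathcal{B}|_\N$ — the restriction to~$\N$ of the Fell bundle representation of Definition~\ref{fbrep} given by the inclusions $B_n\injto\Cst(\mathcal{B})$, $n\in\Z$ — and that $\omega_0$ is injective. Put $J_0\defeq\overline{\sum_{n\ge1}\BRAKET{B_n}{B_n}}=\overline{\sum_{n\ge1}B_nB_n^*}\idealin B_0$. I claim $J_0$ is the covariance ideal $J(\omega_n)$ from Definition~\ref{def:covariance_ideal}. For $x,y\in B_n$ with $n\ge1$, the Fell bundle relations give $\omega_0(xy^*)=\omega_n(x)\omega_n(y)^*$, so $\omega_0(xy^*)\Cst(\mathcal{B})\subseteq\omega_n(B_n)\Cst(\mathcal{B})\subseteq\Hilm_{\ge1}\Cst(\mathcal{B})$; by linearity and continuity $J_0\subseteq J(\omega_n)$. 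For the reverse inclusion I would use the canonical conditional expectation $E_0\colon\Cst(\mathcal{B})\to B_0$: if $m\ge1$, $x\in B_m$ and $z\in\Cst(\mathcal{B})$, then $E_0(\omega_m(x)z)=xz_{-m}\in B_mB_m^*$, where $z_{-m}$ is the degree~$-m$ component of~$z$, so $E_0(\Hilm_{\ge1}\Cst(\mathcal{B}))\subseteq J_0$ and hence $\Hilm_{\ge1}\Cst(\mathcal{B})\cap B_0\subseteq J_0$. If $a\in J(\omega_n)$, then $aB_0=\omega_0(a)B_0\subseteq\Hilm_{\ge1}\Cst(\mathcal{B})\cap B_0\subseteq J_0$, and multiplying by an approximate unit of~$B_0$ gives $a\in J_0$. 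This proves $J(\omega_n)=J_0$, which is the second assertion of the theorem.

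Since $\ker\omega_0=0$ and the covariance ideal is~$J_0$, Theorem~\ref{the:ideal_Toeplitz} identifies the kernel of the canonical $\Star{}$homomorphism $\varrho\colon\Toep\to\Cst(\mathcal{B})$ with $V(J_0)\cap L(0)$, so $\Cst(\mathcal{B})\cong\Toep/(V(J_0)\cap L(0))$. Next I would compute $K\defeq\bigcap_{n\ge1}\ker\vartheta_0^n$: since $\vartheta_0^n(a)=0$ exactly when $aB_n=0$, equivalently $aB_nB_n^*=0$, one gets $\ker\vartheta_0^n=(\overline{B_nB_n^*})^{\bot}$ and therefore $K=(\overline{\sum_{n\ge1}B_nB_n^*})^{\bot}=J_0^{\bot}$, whence $J_0\subseteq J_0^{\bot\bot}=K^{\bot}$. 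This inclusion of ideals gives $V(J_0)\cap L(0)\subseteq V(K^{\bot})\cap L(0)$, so there is a canonical gauge-equivariant surjection $\sigma\colon\Cst(\mathcal{B})\prto\Toep/(V(K^{\bot})\cap L(0))=\CP_\mathrm{Katsura}(\mathcal{B}|_\N)$.

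Showing that~$\sigma$ is injective is the step I expect to require the most care, and I would settle it by a gauge-equivariant uniqueness argument. The fixed-point subalgebra of~$\Cst(\mathcal{B})$ for the gauge action is $B_0$, and on degree-zero parts~$\sigma$ restricts to the canonical $\Star{}$homomorphism $B_0\to\CP_\mathrm{Katsura}(\mathcal{B}|_\N)$ — because both~$\varrho$ and $\Toep\to\CP_\mathrm{Katsura}(\mathcal{B}|_\N)$ restrict on $B_0=\Hilm_0\subseteq\Toep_0$ to the canonical maps. This map is injective by Theorem~\ref{the:covariance_ideal_not_too_large} (applied with $J=K^{\bot}$), and it is surjective onto $(\CP_\mathrm{Katsura}(\mathcal{B}|_\N))_0$ since~$\sigma$ is surjective and $\Star{}$homomorphisms of $\Z$-graded $\Cst$-algebras preserve gradings; so~$\sigma$ restricts to an isomorphism of fixed-point subalgebras. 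As the canonical conditional expectation $E_0\colon\Cst(\mathcal{B})\to B_0$ is faithful — a standard property of section $\Cst$-algebras of Fell bundles over~$\Z$, for which the full and reduced section algebras coincide — the argument of Theorem~\ref{inje} applies to the $\T$-equivariant surjection~$\sigma$: if $\sigma(x)=0$, then $\sigma(E_0(x^*x))=0$, so $E_0(x^*x)=0$ by injectivity of~$\sigma$ on $B_0$, hence $x=0$. Thus~$\sigma$ is an isomorphism $\Cst(\mathcal{B})\cong\CP_\mathrm{Katsura}(\mathcal{B}|_\N)$ compatible with gauge actions and canonical representations. The subtlety I would want to check carefully is the identification of $(\CP_\mathrm{Katsura}(\mathcal{B}|_\N))_0$ with the quotient of~$\Toep_0$ by $\Toep_0\cap V(K^{\bot})\cap L(0)$, so that the restriction of~$\sigma$ to fixed-point subalgebras is under control; this rests on the compatibility between gauge-invariant ideals of~$\Toep$ and invariant ideals of~$\Toep_0$, together with the inductive-limit description of~$\Toep_0$ in Theorem~\ref{the:Toeplitz_gauge_fixed}.
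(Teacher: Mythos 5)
Your argument is correct, and it reaches the theorem by a genuinely different route from the paper's. The paper first proves that the kernel~\(H\) of the surjection \(\Toep\to\Cst(\mathcal{B})\) equals the maximal gauge-invariant ideal \(L(0)\cap V(K^\bot)\) with \(H\cap A=0\): it shows that any gauge-invariant ideal \(L\supsetneq H\) satisfies \(L\cap A\neq0\), by invoking the bijection between gauge-invariant ideals of \(\Cst(\mathcal{B})\) and \(\mathcal{B}\)\nb-invariant ideals of~\(B_0\); only afterwards does it compute the covariance ideal, by describing the gauge-invariant part of the right ideal \(\Hilm_{\ge1}\Cst(\mathcal{B})\) via \(B_\ell B_\ell^* B_\ell=B_\ell\). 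You reverse the order: you compute the covariance ideal \(J_0\) first, using the conditional expectation~\(E_0\) instead of that explicit description (the two computations carry the same content), read off \(\ker\varrho=V(J_0)\cap L(0)\) from Theorem~\ref{the:ideal_Toeplitz}, compute \(K=J_0^\bot\) explicitly, and then prove injectivity of the induced surjection~\(\sigma\) onto \(\CP_{\mathrm{Katsura}}(\mathcal{B}|_\N)\) by the gauge-uniqueness argument with the faithful expectation on \(\Cst(\mathcal{B})\). Both proofs ultimately rest on the same fact---faithfulness of the expectation onto the degree-zero fibre---but yours yields the explicit identity \(K=\bigl(\overline{\sum_{n\ge1}\BRAKET{B_n}{B_n}}\bigr)^\bot\), which makes the inclusion \(J_0\subseteq K^\bot\) transparent, whereas the paper's maximality argument gives information about all gauge-invariant ideals of \(\Cst(\mathcal{B})\) at once. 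Two minor remarks: faithfulness of \(E_0\) on \(\Cst(\mathcal{B})\) needs no appeal to amenability or to the coincidence of full and reduced section algebras, since the expectation obtained by averaging over any continuous circle action is automatically faithful (the same fact the paper uses for~\(\Toep\) itself); and the ``subtlety'' you flag at the end is not actually needed, because the argument of Theorem~\ref{inje} only uses injectivity of~\(\sigma\) on the fixed-point subalgebra of the \emph{source}, which is~\(B_0\), and requires no identification of the fixed-point subalgebra of the target.
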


\begin{proof}
  The restriction~\(\mathcal{B}|_\N\)
  is a partial product system by Proposition~\ref{pro:pps_from_Fell}.
  Let~\(\Toep\)
  be its Toeplitz \(\Cst\)\nb-algebra.
  The canonical maps \(B_n \to \Cst(\mathcal{B})\)
  for \(n\in\N\)
  form a representation of~\(\mathcal{B}|_\N\).
  Hence they induce a \Star{}homomorphism
  \(\Toep\to\Cst(\mathcal{B})\).
  It is \(\T\)\nb-equivariant,
  and it is also surjective because its range contains the dense
  \Star{}subalgebra \(\bigoplus_{n\in\N} B_n\).
  Thus~\(\Cst(\mathcal{B})\)
  is the quotient of~\(\Toep\)
  by a gauge-invariant ideal~\(H\)
  in~\(\Toep\).
  We have \(H\cap A=0\)
  because \(A=B_0 \injto \Cst(\mathcal{B})\).
  Let~\(J\)
  denote the covariance ideal of the representation
  of~\(\mathcal{B}|_\N\)
  in~\(\Cst(\mathcal{B})\).
  It follows from Theorem~\ref{the:ideal_Toeplitz}
  that~\(\Cst(\mathcal{B})\)
  is the covariance algebra \(\CP(\mathcal{B}|_\N,J)\).
  Theorem~\ref{the:covariance_ideal_not_too_large} shows that
  \(H_{\max} \defeq L(0)\cap V(K^\bot)\)
  with \(K = \bigcap_{n=1}^\infty \ker(\vartheta_0^n)\).
  is the maximal gauge-invariant ideal \(H_{\max} \idealin \Toep\)
  with \(H_{\max} \cap A=0\).
  We claim that \(H = H_{\max}\).
  Since \(H\cap A=0\),
  the maximality of~\(H_{\max}\)
  gives \(H\subseteq H_{\max}\).
  For the converse inclusion, we show that \(L\cap A\neq0\)
  for any gauge-invariant ideal \(L\idealin\Toep\)
  with \(H\subsetneq L\).
  Since \(H\subseteq L\),
  the quotient~\(\Toep/L\)
  is a quotient of~\(\Cst(\mathcal{B})\)
  by a gauge-invariant ideal.  The gauge-invariant ideals in
  \(\Cst(\mathcal{B})\)
  are naturally in bijection with the \(\mathcal{B}\)\nb-invariant
  ideals in~\(B_0\).
  Since \(H\neq L\),
  the map \(B_0 \to \Cst(\mathcal{B})/(L/H)\)
  is not injective.  Thus \(L\cap A\neq0\)
  as claimed if \(H\subsetneq L\).
  So \(H=H_{\max}\)
  and \(\Cst(\mathcal{B})\)
  is the Katsura algebra of~\(\mathcal{B}|_\N\).

  Finally, we show that the covariance ideal~\(J\)
  of~\(\Cst(\mathcal{B})\)
  is the closed linear span of
  \(\sum_{n=1}^\infty \BRAKET{B_n}{B_n}\).
  The relation \(\BRAKET{x}{y} = x\cdot y^*\)
  for \(x,y\in B_n\),
  \(n\in\N\)
  holds in~\(\Cst(\mathcal{B})\).
  Hence \(\BRAKET{x}{y} \in B_{\ge1} \cdot \Cst(\mathcal{B})\)
  if \(n\ge1\).
  Thus the closed linear span of the ideals \(\BRAKET{B_n}{B_n}\)
  for \(n\in\N_{\ge1}\)
  is contained in the covariance ideal.  The right ideal
  \(\Hilm_{\ge1} \Cst(\mathcal{B})\)
  in~\(\Cst(\mathcal{B})\)
  is the closed linear span of \(B_\ell\cdot B_n\)
  for \(\ell\in\N_{\ge1}\),
  \(n\in\Z\).
  Since \(B_\ell\cdot B_\ell^*\cdot B_\ell = B_\ell\),
  it is equal to the closed linear span of
  \(B_\ell\cdot B_\ell^*\cdot B_n = \BRAKET{B_\ell}{B_\ell}\cdot
  B_n\).
  In particular, its gauge-invariant part is the closed linear span of
  \(\BRAKET{B_\ell}{B_\ell}\cdot B_0 = \BRAKET{B_\ell}{B_\ell}\).
  The covariance ideal must be contained in this ideal.  This
  inclusion and the reverse inclusion proved above give the assertion.
\end{proof}

Now we may answer the question when a partial product system of
Hilbert bimodules over~\(\N\)
is the restriction of a Fell bundle over~\(\Z\).
We mean here that the Fell bundle is such that it induces both the
multiplication maps and the inner products in the partial product
system.

\begin{theorem}
  \label{the:extend_pps_to_Fell}
  A weak partial product system
  \(\Hilm = (A,\Hilm_n,\mu_{n,m})_{n,m\in\N}\)
  is the restriction to~\(\N\)
  of a Fell bundle over~\(\Z\)
  if and only if each~\(\Hilm_n\)
  is a Hilbert \(A\)\nb-bimodule
  and
  \begin{align}
    \label{eq:Fell_bundle_extend_condition1}
    \BRAKET{\Hilm_n}{\Hilm_n}\cdot\Hilm_m
    &\subseteq \mu_{n,m-n}(\Hilm_n \otimes_A \Hilm_{m-n}),\\
    \label{eq:Fell_bundle_extend_condition2}
    \Hilm_m\cdot\braket{\Hilm_n}{\Hilm_n}
    &\subseteq \mu_{m-n,n}(\Hilm_{m-n} \otimes_A \Hilm_n)
  \end{align}
  as submodules of~\(\Hilm_m\)
  for all \(m,n\in\N\)
  with \(m\ge n\);
  here~\(\BRAKET{\blank}{\blank}\)
  denotes the left inner product.  The extension to a Fell bundle
  over~\(\Z\)
  is unique up to a canonical isomorphism.  The reverse inclusions to
  those in \eqref{eq:Fell_bundle_extend_condition1}
  and~\eqref{eq:Fell_bundle_extend_condition2} always hold, so that
  these inclusions are equivalent to equalities.
\end{theorem}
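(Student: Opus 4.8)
The plan is to dispose of the easy implication and the final sentence first, and then to build the Fell bundle by hand for the converse.

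\emph{Necessity and the reverse inclusions.} If \(\Hilm=\mathcal{B}|_\N\) for a Fell bundle \(\mathcal{B}=(B_n)_{n\in\Z}\), then each \(\Hilm_n=B_n\) is a Hilbert \(A\)\nb-bimodule as recalled before Proposition~\ref{pro:pps_from_Fell}, and for \(m\ge n\), \(x,y\in\Hilm_n\), \(z\in\Hilm_m\) one has \(\BRAKET{x}{y}\cdot z=x\cdot(y^*z)\) with \(y^*z\in B_{m-n}\), so \(\BRAKET{x}{y}\cdot z\in\Hilm_n\cdot\Hilm_{m-n}=\mu_{n,m-n}(\Hilm_n\otimes_A\Hilm_{m-n})\); this gives~\eqref{eq:Fell_bundle_extend_condition1}, and~\eqref{eq:Fell_bundle_extend_condition2} is symmetric. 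The reverse inclusions hold as soon as each \(\Hilm_n\) is a Hilbert bimodule, since then \(\BRAKET{\Hilm_n}{\Hilm_n}\cdot\Hilm_n=\Hilm_n\) and hence \(\mu_{n,m-n}(\Hilm_n\otimes_A\Hilm_{m-n})=\Hilm_n\cdot\Hilm_{m-n}=\BRAKET{\Hilm_n}{\Hilm_n}\cdot\Hilm_n\cdot\Hilm_{m-n}\subseteq\BRAKET{\Hilm_n}{\Hilm_n}\cdot\Hilm_m\), and symmetrically; so \eqref{eq:Fell_bundle_extend_condition1} and \eqref{eq:Fell_bundle_extend_condition2} are equalities.

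\emph{Building the bundle.} For the converse, put \(L_n\defeq\BRAKET{\Hilm_n}{\Hilm_n}\) and \(R_n\defeq\braket{\Hilm_n}{\Hilm_n}\) (ideals of~\(A\)), set \(B_n\defeq\Hilm_n\) for \(n\ge0\) and \(B_{-n}\defeq\Hilm_n^*\) (the conjugate Hilbert bimodule) for \(n>0\), with conjugation \(x\mapsto x^*\) as involution. The multiplication is defined sector by sector. For \(n,m\ge0\) use \(\mu_{n,m}\); for \(n,m\le0\) use the conjugate of \(\mu_{\abs{m},\abs{n}}\) after the canonical flip \(\Hilm_{\abs{n}}^*\otimes_A\Hilm_{\abs{m}}^*\cong\overline{\Hilm_{\abs{m}}\otimes_A\Hilm_{\abs{n}}}\); for \(0<n\le m\) take \(B_{-n}\otimes_A B_m\to B_{m-n}\) to be
\[
\Hilm_n^*\otimes_A\Hilm_m\cong\Hilm_n^*\otimes_{L_n}(\Hilm_n\otimes_A\Hilm_{m-n})\cong R_n\otimes_A\Hilm_{m-n}\hookrightarrow\Hilm_{m-n},
\]
where the first isomorphism combines \(\Hilm_n^*\otimes_A\Hilm_m=\Hilm_n^*\otimes_{L_n}(L_n\cdot\Hilm_m)\) with the equality \(L_n\cdot\Hilm_m=\mu_{n,m-n}(\Hilm_n\otimes_A\Hilm_{m-n})\) from~\eqref{eq:Fell_bundle_extend_condition1} and its reverse, the second is the imprimitivity identity \(\Hilm_n^*\otimes_{L_n}\Hilm_n\cong R_n\) (valid since \(L_n\cong\Comp(\Hilm_n)\) acts faithfully and nondegenerately on~\(\Hilm_n\)), and the last inclusion is isometric because \(R_n\) is an ideal; symmetrically \(B_m\otimes_A B_{-n}\to B_{m-n}\) for \(0<n\le m\) is built from~\eqref{eq:Fell_bundle_extend_condition2}. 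The remaining mixed sectors are \emph{defined} by conjugating the ones just given, which makes \((bc)^*=c^*b^*\) hold in all sectors. Tracing through the definition, \(b^*\cdot c\) for \(b\in\Hilm_n\) and \(c=\mu_{n,m-n}(\xi\otimes\eta)\in\Hilm_m\) with \(m\ge n\) equals \(\braket{b}{\xi}_A\cdot\eta\), the value one expects. Each multiplication map is an isometry (hence norm-submultiplicative), \(b^*b=\braket{b}{b}_A\ge0\) in \(B_0=A\) so \(\norm{b^*b}=\norm{b}^2\), and the involution is conjugate-linear and isometric.

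\emph{Associativity and uniqueness.} It remains to verify \((b_1b_2)b_3=b_1(b_2b_3)\). The eight sign patterns of \((b_1,b_2,b_3)\) pair up under \([(b_1b_2)b_3]^*=b_3^*(b_2^*b_1^*)\), so it suffices to treat \((+,+,+)\), \((-,+,+)\), \((+,-,+)\) and \((+,+,-)\). The first is the assumed associativity~\eqref{assoeqn}, valid for all indices in~\(\N\) by Definition~\ref{def:weak_pps}. For the other three, substitute the sector definitions and use the equalities \(L_n\cdot\Hilm_m=\mu_{n,m-n}(\Hilm_n\otimes_A\Hilm_{m-n})\) and its mirror to write the relevant factors in the form \(\mu_{n,k}(\xi\otimes\eta)\); after a further split on the signs of the exponent differences, both sides collapse, via~\eqref{assoeqn} and the bimodule property of the maps~\(\mu\), to the same element. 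For instance, in \((-,+,+)\) with \(x\in\Hilm_n\), \(y=\mu_{n,m-n}(\xi\otimes\eta)\) (for \(m\ge n\)) and \(z\in\Hilm_l\), both \((x^*y)z\) and \(x^*(yz)\) equal \(\braket{x}{\xi}_A\cdot(\eta z)\), using \(yz=\xi\cdot(\eta z)\) from~\eqref{assoeqn} and the formula above for the mixed product. I expect this case analysis -- and keeping straight which ideal each inner product lies in -- to be the main technical burden, though every step is routine. Finally, for uniqueness: in any Fell bundle \(\mathcal{B}'\) over~\(\Z\) restricting to~\(\Hilm\) over~\(\N\) one has \(B'_{-n}=B_n^*=\Hilm_n^*\) canonically, and for \(b\in\Hilm_n\), \(c\in\Hilm_m\) with \(m\ge n\) the element \(b^*c\in\Hilm_{m-n}\) is forced by \(\braket{b^*c}{d}_A=\braket{c}{\mu_{n,m-n}(b\otimes d)}_A\) for all \(d\in\Hilm_{m-n}\); so \(\mathcal{B}'\) agrees with the bundle constructed above under the canonical identification.
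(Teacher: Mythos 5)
Your proposal takes a genuinely different route from the paper's. The paper never constructs the negative fibres and their multiplication maps by hand: after disposing of necessity and the automatic reverse inclusions exactly as you do, it first upgrades~\(\Hilm\) to a partial product system, then shows that \(J\defeq\overline{\sum_{n\ge1}\BRAKET{\Hilm_n}{\Hilm_n}}\) is the covariance ideal of a gauge-invariant ideal \(H=V(J)\cap L(0)\) with \(H\cap A=0\) (this uses Theorems \ref{the:ideal_Toeplitz} and~\ref{the:covariance_ideal_not_too_large}), and finally realises the Fell bundle concretely as \(B_n\defeq\omega_n(\Hilm_n)\) and \(B_{-n}\defeq B_n^*\) inside the \(\Cst\)\nb-algebra \(\Toep/H\); there associativity, the involution axioms and the \(\Cst\)\nb-identity are inherited from the ambient \(\Cst\)\nb-algebra, and all that remains is \(B_nB_m\subseteq B_{n+m}\) for mixed signs, a short computation from~\eqref{eq:Fell_bundle_extend_condition2}. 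Your construction is more elementary in that it bypasses the ideal-structure machinery entirely, but it relocates the entire difficulty into the sectorwise associativity check, of which you verify one subcase of one sign pattern and declare the rest routine. That check is precisely what the paper's embedding trick is designed to avoid, and it is where the hypotheses enter least transparently: each of the patterns \((+,-,+)\) and \((+,+,-)\) splits further according to the signs of the intermediate degrees, and the subcases need both \eqref{eq:Fell_bundle_extend_condition1} and~\eqref{eq:Fell_bundle_extend_condition2} together with the imprimitivity identity \(\BRAKET{\beta}{y}\gamma=\beta\braket{y}{\gamma}\). I believe all cases do close up --- your own uniqueness argument shows a posteriori that your multiplication maps must coincide with the restrictions of the bundle the paper produces --- but as written this is a correct and detailed plan rather than a complete proof. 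The ingredients you do spell out (the isometry \(\Hilm_n^*\otimes_A\Hilm_m\cong R_n\otimes_A\Hilm_{m-n}\injto\Hilm_{m-n}\) built from \(L_n\cdot\Hilm_m=\mu_{n,m-n}(\Hilm_n\otimes_A\Hilm_{m-n})\), the necessity direction, the reverse inclusions, and uniqueness via \(\braket{b^*c}{d}=\braket{c}{\mu_{n,m-n}(b\otimes d)}\)) are all correct and either match or legitimately replace the corresponding steps in the paper.
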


\begin{proof}
  The left action of the ideal \(\BRAKET{\Hilm_n}{\Hilm_n}\idealin A\)
  is nondegenerate on~\(\Hilm_n\)
  and hence also on \(\Hilm_n \otimes_A \Hilm_m\).
  Therefore,
  \(\BRAKET{\Hilm_n}{\Hilm_n}\cdot\Hilm_m \supseteq
  \mu_{n,m-n}(\Hilm_n \otimes_A \Hilm_{m-n})\)
  holds for any weak partial product system of Hilbert bimodules.  A
  dual proof shows the inclusion
  \(\Hilm_m\cdot\braket{\Hilm_n}{\Hilm_n} \supseteq
  \mu_{m-n,n}(\Hilm_{m-n} \otimes_A \Hilm_n)\).
  Thus the reverse inclusions to those in
  \eqref{eq:Fell_bundle_extend_condition1}
  and~\eqref{eq:Fell_bundle_extend_condition2} always hold.

  Now assume that~\(\Hilm\)
  is the restriction of a Fell bundle over~\(\Z\),
  which we also denote by \((\Hilm_n)_{n\in\Z}\).
  We may rewrite \(\BRAKET{x}{y} = x \cdot y^*\)
  and \(\braket{x}{y} = x^* \cdot y\)
  for all \(x,y\in\Hilm_n\), \(n\in\N\).  Hence
  \[
  \BRAKET{\Hilm_n}{\Hilm_n}\cdot\Hilm_m
  = \Hilm_n\cdot \Hilm_n^* \cdot \Hilm_m
  \subseteq \Hilm_n \cdot \Hilm_{m-n},
  \]
  which is equivalent to~\eqref{eq:Fell_bundle_extend_condition1}.  A
  dual argument gives~\eqref{eq:Fell_bundle_extend_condition2}.  Hence
  these two conditions are necessary for a weak partial product system
  to be the restriction of a Fell bundle over~\(\Z\).
  Now we assume these two conditions.  We are going to prove that our
  weak partial product system extends to a Fell bundle over~\(\Z\).

  First, we show that it is a partial product system.  Then we prove
  that \(J\defeq \sum_{n=1}^\infty \BRAKET{\Hilm_n}{\Hilm_n}\)
  is the covariance ideal for a gauge-invariant ideal
  \(H\idealin \Toep\)
  with \(H\cap A = 0\).
  So we get a faithful representation of the partial product
  system~\((\Hilm_n)_{n\in\N}\)
  in~\(\Toep/H\).
  Finally, we show that the images~\(B_n\)
  of~\(\Hilm_n\)
  in~\(\Toep/H\)
  for \(n\in\N\)
  and their adjoints \(B_{-n} \defeq B_n^*\)
  form a concrete Fell bundle over~\(\Z\),
  that is, \(B_n^* = B_{-n}\)
  for all \(n\in\Z\)
  and \(B_n\cdot B_m \subseteq B_{n+m}\)
  for all \(n,m\in\Z\).
  Since the representation of~\(\Hilm\)
  in~\(\Toep/H\)
  is faithful, this is the desired extension of~\((B_n)_{n\in\Z}\)
  to a Fell bundle over~\(\Z\).

  Let \(n,m\in\N_{>0}\)
  satisfy \(m\ge n\).
  The operator \(S_n(x)\colon \Hilm_{m-n} \to \Hilm_m\),
  \(y\mapsto x\otimes y\),
  is an adjointable map onto the Hilbert submodule
  \(\mu_{n,m-n}(\Hilm_n \otimes_A \Hilm_{m-n}) \subseteq \Hilm_m\).
  Equation~\eqref{eq:Fell_bundle_extend_condition1} identifies this
  with \(\BRAKET{\Hilm_n}{\Hilm_n}\cdot \Hilm_m\).
  Hence \(\BRAKET{y}{z}S_n(x) = S_n(\BRAKET{y}{z}x)\)
  for \(x,y,z\in\Hilm_n\)
  is an adjointable operator \(\Hilm_{m-n} \to \Hilm_m\).
  Since any element of~\(\Hilm_n\)
  may be written as \(\BRAKET{x}{y}z\)
  for suitable \(x,y,z\in\Hilm_n\),
  this shows that the operators
  \(S_n(x)\colon \Hilm_{m-n} \to \Hilm_m\) are adjointable.  And
  \[
  S_n(\Hilm_n)^* \Hilm_m
  = S_n(\Hilm_n)^* \BRAKET{\Hilm_n}{\Hilm_n} \Hilm_m
  = S_n(\Hilm_n)^* \mu_{n,m-n}(\Hilm_n \otimes_A\Hilm_{m-n})
  \subseteq \Hilm_{m-n}.
  \]
  Thus the Fock representation of our weak partial product system
  exists and is a representation.  Equivalently, \(\Hilm\)
  is a partial product system.

  Now let \(J\idealin A\)
  be the closure of \(\sum_{n=1}^\infty \BRAKET{\Hilm_n}{\Hilm_n}\).
  We claim that the gauge-invariant ideal
  \(V(J) \cap L(0)\idealin \Toep\)
  has zero intersection with~\(A\).
  If \(b\in \bigcap_{\ell=1}^\infty \ker\vartheta_0^\ell\),
  then \(b\cdot \Hilm_\ell=0\)
  and hence \(b\cdot\BRAKET{\Hilm_\ell}{\Hilm_\ell}=0\)
  for all \(\ell>0\).
  So \(b\cdot J=0\).
  Hence \(A\cap V(J) \cap L(0) = 0\)
  by Theorem~\ref{the:covariance_ideal_not_too_large}.  The
  covariance ideal of \(V(J) \cap L(0)\)
  is always contained in~\(J\).
  We prove the reverse inclusion.  Let
  \(a\in \BRAKET{\Hilm_n}{\Hilm_n}\)
  for \(n\ge1\).
  Then \(\vartheta_0^n(a)\)
  is a compact operator on~\(\Hilm_n\)
  because the left action on a Hilbert bimodule maps
  \(\BRAKET{\Hilm_n}{\Hilm_n}\)
  isomorphically onto the compact operators.  We claim that
  \(a - \vartheta_0^n(a) \in \Toep\)
  belongs to \(V(J) \cap L(0)\).
  Since
  \(\vartheta_0^n(a) \in \Hilm_{\ge n}\Toep \subseteq
  \Hilm_{\ge1}\Toep\),
  this implies that~\(a\) belongs to the covariance ideal.

  We prove the claim.  Let \(m\ge n\).
  Then the operators \(\vartheta_0^m(a)^*\)
  and \(\vartheta_n^m(\vartheta_0^n(a))^*\)
  on~\(\Hilm_n\)
  agree on the Hilbert submodule \(\Hilm_n\cdot\Hilm_{m-n}\).
  This is equal to \(\BRAKET{\Hilm_n}{\Hilm_n}\Hilm_m\)
  by assumption and hence contains
  \((\vartheta_0^m(a) - \vartheta_n^m(\vartheta_0^n(a))) b\)
  for all \(b\in\Hilm_m\).  So
  \[
  (\vartheta_0^m(a) - \vartheta_n^m(\vartheta_0^n(a)))^*
  \cdot (\vartheta_0^m(a) - \vartheta_n^m(\vartheta_0^n(a))) \cdot b
  = 0.
  \]
  This shows
  \(\vartheta_0^m(a)^* = \vartheta_n^m(\vartheta_0^n(a))^*\)
  for \(m\ge n\)
  as needed.  Now let \(\ell<n\).
  Then \(a-\vartheta_0^n(a)\in\Toep_0\)
  acts on the summand~\(\Hilm_\ell\)
  in the Fock representation by \(\vartheta_0^\ell(a)\).
  It maps~\(\Hilm_\ell\)
  into \(\BRAKET{\Hilm_n}{\Hilm_n}\cdot \Hilm_\ell\).
  We claim that this is contained in
  \(\Hilm_\ell \BRAKET{\Hilm_{n-\ell}}{\Hilm_{n-\ell}}\subseteq \Hilm_\ell J\).
  The proof of this claim will finish the proof that
  \(a - \vartheta_0^n(a) \in V(J) \cap L(0)\),
  which is all that remains to prove that the covariance ideal of
  \(V(J) \cap L(0)\) is~\(J\).

  Assumption~\eqref{eq:Fell_bundle_extend_condition1} implies that the
  range ideal of \(\Hilm_\ell \otimes_A \Hilm_{n-\ell}\)
  is
  \(\BRAKET{\Hilm_\ell}{\Hilm_\ell} \cap \BRAKET{\Hilm_n}{\Hilm_n}\).
  So the submodule
  \((\BRAKET{\Hilm_n}{\Hilm_n} \Hilm_\ell) \otimes_A \Hilm_{n-\ell}\)
  has the same range ideal and hence is equal to it.  Since its range
  ideal is equal to the range ideal of
  \(\BRAKET{\Hilm_n}{\Hilm_n} \Hilm_\ell\),
  the source ideal of \(\BRAKET{\Hilm_n}{\Hilm_n} \Hilm_\ell\)
  contains the range ideal \(\BRAKET{\Hilm_{n-\ell}}{\Hilm_{n-\ell}}\)
  of~\(\Hilm_{n-\ell}\).  Hence
  \[
  \BRAKET{\Hilm_n}{\Hilm_n} \Hilm_\ell
  = \BRAKET{\Hilm_n}{\Hilm_n} \Hilm_\ell
  \cdot \BRAKET{\Hilm_{n-\ell}}{\Hilm_{n-\ell}}
  \subseteq \Hilm_\ell \cdot \BRAKET{\Hilm_{n-\ell}}{\Hilm_{n-\ell}}.
  \]
  This proves the claim and shows that the covariance ideal of
  \(V(J) \cap L(0)\)
  is~\(J\).

  We have already seen that \(A \cap V(J) \cap L(0)= 0\).
  Hence the canonical representation
  \(\omega_n\colon \Hilm_n \to \Toep/(V(J) \cap L(0))\)
  is faithful.  We define \(B_n \defeq \omega_n(\Hilm_n)\)
  for \(n\in\N\)
  and \(B_{-n} \defeq B_n^* \subseteq\Toep/(V(J) \cap L(0))\).
  This satisfies \(B_n^* = B_{-n}\)
  for all \(n\in\Z\)
  by construction.  We claim that
  \begin{equation}
    \label{eq:concrete_Fell_bundle}
    B_n \cdot B_m \subseteq B_{n+m}
  \end{equation}
  holds for all \(n,m\in\Z\).
  Thus~\((B_n)_{n\in\Z}\)
  is a concrete Fell bundle.  It extends the partial product system of
  Hilbert bimodules~\(\Hilm\).

  If~\eqref{eq:concrete_Fell_bundle} holds for \((n,m)\in\Z^2\),
  then also
  \(B_{-m}\cdot B_{-n} = B_m^* \cdot B_n^* = (B_n\cdot B_m)^*
  \subseteq B_{n+m}^* = B_{-n-m}\),
  that is, \eqref{eq:concrete_Fell_bundle} holds for
  \((-m,-n)\in\Z^2\).
  Thus it suffices to prove~\eqref{eq:concrete_Fell_bundle} for
  those cases with \(n+m\ge0\).

  Let \(n,m\in\Z\)
  satisfy \(n+m\ge0\).
  Conditions
  \ref{def:weak_representation_1},
  \ref{def:weak_representation_2}
  and~\ref{def:weak_representation_4}
  in Definition~\ref{def:weak_representation}
  imply~\eqref{eq:concrete_Fell_bundle} if \(m\ge0\)
  and either \(n\ge 0\)
  or \(n=-m\)
  or \(-m<n<0\).
  This covers all \((n,m)\in\Z^2\) with \(n+m\ge0\) and \(n\ge0\).
  It remains to treat \((n,m)\in\Z^2\)
  with \(m<0\)
  and \(n+m\ge0\).
  So \(\ell\defeq -m\)
  satisfies \(0<\ell\le n\).
  We compute
  \[
  B_n B_{-\ell}
  = B_n B_\ell^* B_\ell B_\ell^*
  = B_n \braket{B_\ell}{B_\ell} B_\ell^*
  \subseteq B_{n-\ell} B_\ell B_\ell^*
  = B_{n-\ell} \BRAKET{B_\ell}{B_\ell}
  \subseteq B_{n-\ell};
  \]
  here the first step uses
  \(B_{-\ell} = B_\ell^* = B_\ell^* B_\ell B_\ell^*\);
  the second step uses the relation
  \(x^* y = \braket{x}{y}\)
  for all \(x,y\in B_\ell\)
  in the Toeplitz algebra; the third step
  uses~\eqref{eq:Fell_bundle_extend_condition2};
  the fourth step uses \(x y^* = \BRAKET{x}{y}\)
  for all \(x,y\in B_\ell\).
  Hence~\eqref{eq:concrete_Fell_bundle} holds for all \(n,m\in\Z\).
\end{proof}

\begin{proposition}
  \label{pro:graph_graded}
  As in Proposition~\textup{\ref{pro:graph_pps}}, let
  \(E=\bigsqcup_{n\in\N} E_n\)
  be the path category of a directed graph~\(\Gamma\),
  where generators in~\(\Gamma\)
  may have degrees different from~\(1\).
  The Katsura algebra of the partial product system defined
  by~\(E\)
  is the graph \(\Cst\)\nb-algebra of~\(\Gamma\),
  with the gauge action where the partial isometry associated to
  \(e\in E_n\) is \(n\)\nb-homogeneous for all \(n\in\N\).
\end{proposition}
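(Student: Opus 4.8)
The plan is to reduce to the product‑system case already settled in Theorem~\ref{the:product_system_covariance_ideal}. Write $A=\Cont_0(V)$ and let $\Hilm=(A,\Hilm_n,\mu_{n,m})$ be the partial product system built from $E$ in Section~\ref{sec:graph}, so $\Hilm_n=\Cst(E_n)$, where $E_n$ is the set of arrows of $E$ of degree~$n$. Alongside it I would form the ordinary product system $\Hilm'=(A,\Hilm'_k,\mu'_{k,l})$ obtained from the \emph{same} category $E$ with the \emph{standard} grading: $\Hilm'_k=\Cst(E'_k)$, where $E'_k$ is the set of paths of length~$k$ in $\Gamma$, and $\mu'_{k,l}$ is concatenation. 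Since $E$ is the path category of $\Gamma$, concatenation $E'_k\times_{s,r}E'_l\to E'_{k+l}$ is a bijection, so the $\mu'_{k,l}$ are unitary and $\Hilm'$ is a product system, with first fibre $\Hilm'_1=\Cst(E'_1)$ the graph correspondence of $\Gamma$. Now the Fock modules of $\Hilm$ and $\Hilm'$ are the \emph{same} Hilbert $A$-module — the completion of $\Contc(E)$ for $\braket{\delta_x}{\delta_y}=\delta_{x=y}\delta_{s(x)}$ — and the Fock creation operators agree: for each $x\in E$ the operator of $\Hilm$ attached to $\delta_x\in\Hilm_{\deg x}$ and the operator of $\Hilm'$ attached to $\delta_x\in\Hilm'_{\abs{x}}$ are both left multiplication by $\delta_x$ on $\Hilm[F]$. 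Hence, by Theorem~\ref{the:Toeplitz_Fock}, the Toeplitz algebras $\Toep\defeq\Toep(\Hilm)$ and $\Toep(\Hilm')$ coincide as one $\Cst$-subalgebra of $\Bound(\Hilm[F])$; they carry two gauge actions, $\alpha$ (by $\deg$) and $\alpha'$ (by length), but are equal as $\Cst$-algebras.

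Next I would look at the canonical quotient $q\colon\Toep(\Hilm')\prto\CP_{\mathrm{Katsura}}(\Hilm')$. By Theorem~\ref{the:product_system_covariance_ideal} and the identification in Section~\ref{sec:graph} of Katsura's $\Cst$-algebra of a graph correspondence with the graph $\Cst$-algebra, $\CP_{\mathrm{Katsura}}(\Hilm')\cong\Cst(\Gamma)$, and under this isomorphism $q$ sends the Fock operator of $\delta_x$ to the partial isometry $s_x$ attached to the path $x$ (so $s_v=p_v$ for $v\in V$). Viewing $q$ as a map out of $\Toep=\Toep(\Hilm)$ and composing with the Fock representation of $\Hilm$ gives a representation $\omega$ of $\Hilm$ in $\Cst(\Gamma)$ with $\omega_n(\delta_x)=s_x$, whose induced \Star{}homomorphism is again $q$. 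In particular $q$ is surjective, $A\cap\ker q=0$ since $q|_A=\omega_0$ sends $\delta_v$ to the nonzero projection $p_v$, and $\ker q=V(K^\bot)\cap L'(0)$, where $L'(0)$ is the ideal $L(0)$ of Section~\ref{sec:gauge-invariant_ideals} formed with the $\Hilm'$-grading and $K^\bot$ is the ideal of functions vanishing on those vertices that receive no arrow — this ideal is the same for $\Hilm$ and $\Hilm'$, because $K=\bigcap_{n\ge1}\ker\vartheta_0^n$ only records which vertices receive an arrow. Finally, the gauge unitary $z\mapsto U_z$ with $U_z\delta_x=z^{\deg x}\delta_x$ is diagonal in the basis $(\delta_x)_{x\in E}$, so conjugation by it preserves both the submodule $\Hilm[F]K^\bot$ and the set of finite block matrices for the $\Hilm'$-grading; hence $\ker q$ is $\alpha$-invariant.

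The argument then closes with two inclusions. Since $\ker q$ is an $\alpha$-invariant ideal of $\Toep$ with $A\cap\ker q=0$, it is contained in the maximal such ideal $H_{\max}=V(K^\bot)\cap L(0)$ (Theorem~\ref{the:covariance_ideal_not_too_large}, and the proof of Theorem~\ref{the:Fell_Cstar_covariance}), where $L(0)$ is formed with the $\Hilm$-grading; recall $\CP_{\mathrm{Katsura}}(\Hilm)=\Toep/H_{\max}$ by Definition~\ref{def:covariance_algebra}. Conversely, every arrow of $E$ of degree $\le N$ has length $\le N$, so a finite block matrix for the $\Hilm$-grading is also one for the $\Hilm'$-grading; thus $L(0)\subseteq L'(0)$ and $H_{\max}=V(K^\bot)\cap L(0)\subseteq V(K^\bot)\cap L'(0)=\ker q$. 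Therefore $\ker q=H_{\max}$, and $q$ descends to an isomorphism $\CP_{\mathrm{Katsura}}(\Hilm)=\Toep/H_{\max}\congto\Cst(\Gamma)$. Under it the canonical representation of $\Hilm$ corresponds to $\omega$, so the partial isometry $s_e$ attached to $e\in E_n$ is $\bar\omega_n(\delta_e)$, which lies in the $n$-th spectral subspace of the canonical gauge action on $\CP_{\mathrm{Katsura}}(\Hilm)$; this is exactly the gauge action in the statement.

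The steps I expect to need the most care are the identification of the two Toeplitz algebras and of the quotient map $q$ with the homomorphism induced by $\omega$ (one must check that the Fock picture produces literally the same operators, and the same universal representation, for both gradings), together with the bookkeeping of which ideal of $A$ the symbol $K^\bot$ denotes for $\Hilm$ and for $\Hilm'$. Everything else is a direct appeal to Theorems~\ref{the:Toeplitz_Fock}, \ref{the:covariance_ideal_not_too_large} and~\ref{the:product_system_covariance_ideal}, plus the elementary fact that $\deg\ge\text{length}$ on $E$.
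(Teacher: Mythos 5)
Your argument is correct, but it follows a genuinely different route from the paper's. The paper maps \(\Toep\) onto \(\Cst(\Gamma)\) via the canonical representation \(\delta_e\mapsto s_e\) and then identifies the kernel with the maximal gauge-invariant ideal \(H_{\max}=V(K^\bot)\cap L(0)\) by invoking the Bates--Hong--Raeburn--Szyma\'nski classification of gauge-invariant ideals of graph \(\Cst\)\nb-algebras, which forces every strictly larger gauge-invariant ideal to meet \(\Cont_0(V)\). You instead re-grade the path category by length to obtain an honest product system \(\Hilm'\), observe that both Fock representations consist of the \emph{same} operators of left multiplication by \(\delta_x\) on the same Fock module, so that \(\Toep(\Hilm)\) and \(\Toep(\Hilm')\) coincide as a concrete \(\Cst\)\nb-subalgebra of \(\Bound(\Hilm[F])\), import \(\CP_{\mathrm{Katsura}}(\Hilm')\cong\Cst(\Gamma)\) from Theorem~\ref{the:product_system_covariance_ideal}, and then pin down the kernel by the elementary inclusion \(L(0)\subseteq L'(0)\) coming from \(\deg\ge\text{length}\), combined with the maximality of \(H_{\max}\) among \(\alpha\)\nb-invariant ideals meeting \(A\) trivially. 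Your key reductions all check out: the equality \(K=K'\) (both record the vertices receiving an arrow of \(\Gamma\), since \(r(a_1\dotsm a_k)=r(a_1)\)), the equality \(V'(J)=V(J)\) (same Fock module), and the \(\alpha\)\nb-invariance of \(\ker q\) via the diagonal unitaries \(U_z\). What the two approaches buy: the paper's is shorter but leans on an external classification theorem for graph algebras (and tacitly on its compatibility with the non-standard gauge action), whereas yours stays inside the machinery of Sections~\ref{sec:Toeplitz}--\ref{sec:gauge-invariant_ideals}, needing only the standard identification of \(\Cst(\Gamma)\) with Katsura's algebra of the graph correspondence already quoted in Section~\ref{sec:graph}; moreover, your observation that the two differently graded Toeplitz algebras are literally the same operator algebra, and that covariance is detected by comparing the two ideals of ``finite block matrices,'' is a clean piece of structure that the paper's proof does not make visible.
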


\begin{proof}
  Let~\(\Hilm\)
  be the partial product system associated to~\(E\)
  by Proposition~\ref{pro:graph_pps} and let~\(\Toep\)
  be its Toeplitz \(\Cst\)\nb-algebra.
  The graph \(\Cst\)\nb-algebra~\(\Cst(\Gamma)\)
  receives a representation of the partial product system~\(\Hilm\)
  associated to~\(E\).
  This induces a homomorphism \(\Toep\to\Cst(\Gamma)\).
  It is surjective because all the partial isometries and projections
  generating~\(\Cst(\Gamma)\)
  belong to its range.  It is \(\T\)\nb-equivariant
  for the \(\T\)\nb-action
  specified in the proposition.  Hence \(\Cst(\Gamma)\)
  is a quotient of~\(\Toep\)
  by a gauge-invariant ideal.  The canonical map
  \(\Cont_0(E_0) \to \Cst(\Gamma)\)
  is injective.  Hence \(\Cst(\Gamma)\)
  is the covariance algebra for some ideal~\(J\)
  in~\(\Cont_0(E_0)\).
  The gauge-invariant ideals in \(\Cst(\Gamma)\)
  are described in
  \cite{Bates-Hong-Raeburn-Szymanski:Ideal_structure}*{Theorem~3.6}
  through a hereditary and saturated subset of~\(E_0\)
  and a set of breaking vertices.  As a consequence, the map
  \(\Cont_0(E_0) \to \Cst(\Gamma)/H\)
  for a non-zero gauge-invariant ideal \(H\idealin \Cst(\Gamma)\)
  is not injective.  Now it follows as in the proof of
  Theorem~\ref{the:Fell_Cstar_covariance} that \(\Cst(\Gamma)\)
  is the Katsura algebra of~\(\Hilm\).
\end{proof}

\begin{remark}
  All examples of covariance algebras treated above are quotients of
  the Toeplitz algebra by relations of the form
  \(a \sim \vartheta_0^\ell(a)\)
  for some \(a\in A\), \(\ell\in\N_{\ge1}\)
  with \(\vartheta_0^\ell(a)\in\Comp(\Hilm_\ell)\).
  These relations are direct analogues of the usual Cuntz--Pimsner
  covariance condition.  We should allow all \(\ell\ge1\)
  because \(\Hilm_1=0\)
  may happen.  And even for a global product system, it is possible
  to have covariance ideals that are larger than
  \((\vartheta_0^1)^{-1}(\Comp(\Hilm_1))\).
  Our formalism also allows relations of the form
  \(\sum_{i=0}^N x_i \sim 0\)
  for \(x_i\in\Comp(\Hilm_i)\).
  We do not know an example of a covariance algebra that cannot be
  obtained from relations of the simpler Cuntz--Pimsner form
  \(a \sim \vartheta_0^\ell(a)\).
\end{remark}

\begin{bibdiv}
  \begin{biblist}
    \bibselect{references}
  \end{biblist}
\end{bibdiv}

\end{document}